\numberwithin{equation}{section}
\newtheorem{theorem}{Theorem}[section]
\newtheorem{assumption}{Assumption}[section]
\newtheorem{corollary}{Corollary}[section]
\newtheorem{lemma}{Lemma}[section]
\newtheorem{proposition}{Proposition}[section]
\theoremstyle{definition}
\newtheorem{definition}{Definition}[section]
\theoremstyle{remark}
\newtheorem{remark}{Remark}[section]
\newtheorem{example}{Example}[section]
\newcommand{\leref}{Lemma~\ref}
\newcommand{\coref}{Corollary~\ref}
\newcommand{\exref}{Example~\ref}
\newcommand{\thref}{Theorem~\ref}
\newcommand{\asref}{Assumption~\ref}
\newcommand{\Q}{\mathbb{Q}}
\newcommand{\R}{\mathbb{R}}
\newcommand{\T}{\mathcal{T}}
\newcommand{\cP}{\mathcal{P}}
\newcommand{\cQ}{\mathcal{Q}}
\newcommand{\cH}{\mathcal{H}}
\newcommand{\fQ}{\mathfrak{Q}}
\newcommand{\eps}{\varepsilon}
\newcommand{\f}{\mathfrak{f}}
\newcommand{\Mapsto}{\twoheadrightarrow}
\title[]{On Arbitrage and Duality under Model Uncertainty and Portfolio  Constraints} \thanks{This version: January 12, 2015; First version: February 11, 2014.}
\author[]{Erhan Bayraktar} \thanks{This research is supported in part by the National Science Foundation under grant DMS-0955463.}  
\address{Department of Mathematics, University of Michigan}
\email{erhan@umich.edu}
\author[]{Zhou Zhou}
\address{Department of Mathematics, University of Michigan}
\email{zhouzhou@umich.edu}
\date{}
\keywords{Fundamental theorem of asset pricing, sub-(super-)hedging, model uncertainty, portfolio constraints, optional decomposition}
\begin{document}
\maketitle

\begin{abstract}
We consider the fundamental theorem of asset pricing (FTAP) and the hedging prices of options under non-dominated model uncertainty and portfolio constrains in discrete time. We first show that no arbitrage holds if and only if there exists some family of probability measures such that any admissible portfolio value process is a local super-martingale under these measures. We also get the non-dominated optional decomposition with constraints. From this decomposition, we obtain the duality of the super-hedging prices of European options, as well as the sub- and super-hedging prices of  American options. Finally, we get the FTAP and the duality of super-hedging prices in a market where stocks are traded dynamically and options are traded statically.
\end{abstract}

\section{Introduction}
We consider the fundamental theorem of asset pricing (FTAP) and the hedging prices of European and American options under the non-dominated model certainty framework of \cite{Nutz2} with \emph{convex closed portfolio constraints} in discrete time. 
%
We first show that no arbitrage in the quasi-sure sense is equivalent to the existence of a set of probability measures; under each of these measures any admissible portfolio value process is a local super-martingale. Then we get the non-dominated version the of optional decomposition under portfolio constraints. From this optional decomposition, we get the duality of super- and sub-hedging prices of European and American options. We also show that the optimal super-hedging strategies exist. Finally, we add options to the market and get the FTAP and the duality of super-hedging prices of European options by using semi-static trading strategies (i.e., strategies dynamically trading in stocks and statically trading in options).

Our results generalize the ones in \cite[Section 9]{SF} to a non-dominated model-uncertainty set-up, and extend the results in \cite{Nutz2} to the case where portfolio constraints are involved. These conclusions are general enough to cover many interesting models with the so-called \emph{delta constraints}; for example, when shorting stocks is not allowed, or some stocks enter or leave the market at certain times. 

Compared to \cite[Section 9]{SF}, the main difficulty in our setting is due to the fact that the set of probability measures does not admit a dominating measure. We use the measurable selection mechanism developed in \cite{Nutz2} to overcome this difficulty, i.e., first establish the FTAP and super-hedging result in one period, and then \lq\lq measurably\rq\rq\ glue all the periods together to derive multiple-period versions. It is therefore  of crucial importance to get the one-period results. In \cite{Nutz2}, Lemma 3.3 serves as a fundamental tool to show the FTAP and super-hedging result a in one-period model, whose proof relies on an induction on the number of stocks and a separating hyperplane argument. While in our set-up, both the induction and separating argument do not work due to the presence of constraints. In this paper, we instead use a finite covering argument to overcome the difficulty stemming from constraints. Another major difference from \cite{Nutz2} is the proof for the existence of optimal super-hedging strategy in multiple period, which is a direct consequence of Theorem 2.2 there. A key step in the proof of Theorem 2.2 is modifying the trading strategy to the one with fewer \lq\lq rank\rq\rq\ yet still giving the same portfolio value. However, this approach fails to work in our set-up, since the modification may not be admissible anymore due to the portfolio constrains. In our paper, we first find the optimal static trading strategy of options, and then find the optimal dynamical trading strategy of stocks by optional decomposition with constraints. Optional decomposition also helps us obtain the duality results for the American options.


We work within the no-arbitrage framework of \cite{Nutz2}, in which there is said to be an arbitrage when there exists a trading strategy whose gain is quasi surely non-negative and strictly positive with positive probability under an admissible measure. In this framework we are given a model and the non-dominated set of probability measures comes from estimating the parameters of the model. Since estimating results in confidence intervals for the parameters we end up with a set of non-dominated probability measures.

There is another no-arbitrage framework which was introduced by Acciaio et al. \cite{Schachermayer2}. In that framework, there is said to be an arbitrage if the gain from trading is strictly positive for all scenarios.  Under the framework of \cite{Schachermayer2}, the model uncertainty is in fact part of the model itself and the user of that model does not have confidence in her ability to estimate the parameters.  The choice between the framework of \cite{Schachermayer2} and the framework of \cite{Nutz2} is a modeling issue.

Our assumptions mainly contain two parts: (1) the closedness and convexity of the related control sets (see Assumptions \ref{a1}, \ref{a2}, \ref{a3} and \ref{a4}), and (2) some measurability assumptions (see the set-up of Section 3.1 and Assumptions \ref{a2} and \ref{a4}). The first part is almost necessary (see \exref{ex2.2}), and can be easily verified in many interesting cases (see e.g., \exref{ex2.1}). The second part is the analyticity of some relevant sets, which we make in order to apply measurable selection results and perform dynamic programming principle type arguments. Analyticity (which is a measurability concept more general than Borel measurability, so in particular every Borel set is analytic) is a minimal assumption one can have in order to have a dynamic programming principle and this goes well back to Blackwell. These concepts are covered by standard textbooks on measure theory, see e.g. \cite{MR3098996}. See also \cite{Shreve} for applications in stochastic control theory and the references therein. In Section 3.3, we provide some general and easily verifiable sufficient conditions for Assumptions \ref{a2}(iii) and \ref{a4}(ii), as well as Examples \ref{ex:straex} and \ref{ex3.2}.

The rest of the paper is organized as follows: We show the FTAP in one period and in multiple periods in Sections 2 and 3 respectively. In Section 4, we get the super-hedging result in one period. In Section 5, we provide the non-dominated optional decomposition with constraints in multiple periods. Then starting from the optional decomposition, we analyze the sub- and super-hedging prices of European and American options in multiple periods in Section 6. In Section 7, we add options to the market, and study the FTAP and super-hedging using semi-static trading strategies in multiple periods.  Finally in the appendix, we provide the proofs of Lemmas \ref{l6}, \ref{l15}, \ref{l9} and \ref{l7}; these proofs are with a lot of technicalities and can be safely skipped in the first reading. 

We devote the rest of this section to frequently used notation and concepts in the paper. 


\subsection{Frequently used notation and concepts}\label{sec:set-up}\label{notation}
\begin{itemize}
\item $\mathfrak{P}(\Omega)$ denotes set of all the probability measures on $(\Omega,\mathcal{B}(\Omega))$, where $\Omega$ is some polish space, and $\mathcal{B}(\Omega)$ denotes its Borel $\sigma$-algebra. $\mathfrak{P}(\Omega)$ is endowed with the topology of weak convergence.
\item $\Delta S_t(\omega,\cdot)=S_{t+1}(\omega,\cdot)-S_t(\omega),\ \omega\in\Omega_t:=\Omega^t$ (t-fold Cartesian product of $\Omega$). We may simply write $\Delta S$ when there is only one period (i.e., $t=0$).
\item Let $\cP\subset\mathfrak{P}(\Omega_t)$. A property holds $\cP-q.s.$ if and only if it holds $P$-a.s. for any $P\in\cP$. A set $A\in\Omega_t$ is $\cP$-polar if $\sup_{P\in\cP}P(A)=0$.
\item Let $\cP\subset\mathfrak{P}(\Omega)$. $\text{supp}_\cP(\Delta S)$ is defined as the smallest closed subset $A\subset\mathbb{R}^d$ such that $\Delta S\in A\ \cP-q.s.$. Define $N(\mathcal{P}):=\{H\in\mathbb{R}^d:\ H\Delta S=0,\ \mathcal{P}-q.s.\}$ and $N^\perp(\mathcal{P}):=\text{span}(\text{supp}_\cP(\Delta S))\subset\mathbb{R}^d$. Then $N^\perp(\mathcal{P})=(N(\mathcal{P}))^\perp$ by \cite[Lemma 2.6]{Nutz4}. Denote $N(P)=N(\{P\})$ and $N^\perp(P)=N^\perp(\{P\})$.
\item For $\mathcal{H}\subset\mathbb{R}^d$, $\mathcal{H}(\mathcal{P}):=\{H:\ H\in\text{proj}_{N^\perp(\mathcal{P})}(\mathcal{H})\}$. Denote $\mathcal{H}(P)=\mathcal{H}(\{P\})$.
\item For $\mathcal{H}\subset\mathbb{R}^d$, $\mathcal{C}_\mathcal{H}(\mathcal{P}):=\{c H:\ H\in\mathcal{H}(\mathcal{P}),\ c\geq 0\}$.  Denote $\mathcal{C}_\mathcal{H}(P)=\mathcal{C}_\mathcal{H}(\{P\})$.
\item $\mathfrak{C}_\mathcal{H}:=\{cH\in\mathbb{R}^d:\ H\in\mathcal{H},\ c\geq 0\}$, where $\mathcal{H}\subset\mathbb{R}^d$.
\item $(H\cdot S)_t=\sum_{i=0}^{t-1}H_i(S_{i+1}-S_i)$.
\item $\mathbb{R}^*:=[-\infty,\infty]$.
\item $||\cdot||$ represents the Euclidean norm.
\item $E_P|X|:=E_P|X^+|-E_P|X^-|$, and by convention $\infty-\infty=-\infty$. Similarly the conditional expectation is also defined in this extended sense. 
\item $L_+^0(\mathcal{P})$ is the space of random variables  $X$ on the corresponding topological space satisfying $X\geq 0\ \mathcal{P}-q.s.$, and $L^1(\mathcal{P})$ is the space of  random variables $X$ satisfying $\sup_{P\in\cP}E_P|X|<\infty$. Denote $L_+^0(P)=L_+^0(\{P\})$, and $L^1(P)=L^1(\{P\})$. Similar definitions holds for $L^0,\ L_+^1$ and $L^\infty$. We shall sometimes omit $\mathcal{P}$ or $P$ in $L_+^0,\ L^1$, etc., when there is no ambiguity. 
\item We say NA$(\mathcal{P})$ holds, if for any $H\in\mathcal{H}$ satisfying $(H\cdot S)_T\geq 0,\ \mathcal{P}-q.s.$, then $(H\cdot S)_T=0,\ \mathcal{P}-q.s.$, where $\mathcal{H}$ is some admissible control set of trading strategies for stocks. Denote NA$(P)$ for NA$(\{P\})$.
\item We write $Q\lll\mathcal{P}$, if there exists some $P\in\mathcal{P}$ such that $Q\ll P$.
\item Let $(X,\mathcal{G})$ be a measurable space and $Y$ be a topological space. A mapping $\Phi$ from $X$ to the power set of $Y$ is denoted by $\Phi:\ X\Mapsto Y$. We say $\Phi$ is measurable (resp. Borel measurable), if
\begin{equation}\label{e15}
\{x\in X:\ \Phi(x)\cap A\neq\emptyset\}\in\mathcal{G},\ \forall\text{ closed (resp. Borel measurable)}\ A\subset Y.
\end{equation}
$\Phi$ is closed (resp. compact) valued if $\Phi(x)\subset Y$ is closed (resp. compact) for all $x\in X$. We refer to \cite[Chapter 18]{IDA} for these concepts.
\item A set of random variables $A$ is $\mathcal{P}-q.s.$ closed, if $(a_n)_n\subset A$ convergent to some $a\ \mathcal{P}-q.s.$ implies $a\in A$.
\item For $\Phi:\ X\Mapsto Y$, Gr$(\Phi):=\{(x,y)\in X\times Y:\ y\in\Phi(x)\}$.
\item Let $X$ be a Polish space.  A set $A\subset X$ is analytic if it is the image of a Borel subset of another Polish space under a Borel measurable mapping. A function $f: X\mapsto\mathbb{R}^*$ is upper (resp. lower) semianalytic if the set $\{f>c\}$ (resp. $\{f<c\}$) is analytic. \lq\lq u.s.a.\rq\rq\ (resp. \lq\lq l.s.a.\rq\rq) is short for upper (resp. lower) semianalytic.
\item Let $X$ be a polish space. The $\sigma$-algebra $\cap_{P\in\mathfrak{P}(X)}\mathcal{B}(X)^P$ is called the universal completion of $\mathcal{B}(X)$, where $\mathcal{B}(X)^P$ is the $P$-completion of $\mathcal{B}(X)$. A set $A\subset X$ is universally measurable if $A\in\cap_{P\in\mathfrak{P}(X)}\mathcal{B}(X)^P$. A function $f$ is universally measurable if $f\in\cap_{P\in\mathfrak{P}(X)}\mathcal{B}(X)^P$. \lq\lq u.m.\rq\rq\ is short for universally measurable.
\item Let $X$ and $Y$ be some Borel spaces and $U:\ X\Mapsto Y$. Then $u$ is a u.m. selector of $U$, if $u:\ X\mapsto Y$ is u.m. and $u(\cdot)\in U(\cdot)$ on $\{ U\neq\emptyset\}$.
\end{itemize}

\section{The FTAP in one period}

We derive the FTAP for one-period model in this section. \thref{t1} is the main result of this section.

\subsection{The set-up and the main result}
Let $\mathcal{P}$ be a set of probability measures on a Polish space $\Omega$, which is assumed to be convex. Let $S_0\in\R^d$ be the initial stock price, and Borel measurable $S_1:\ \Omega\mapsto\mathbb{R}^d$ be the stock price at time $t=1$. Denote $\Delta S=S_1-S_0$. Let $\mathcal{H}\subset\mathbb{R}^d$ be the set of admissible trading strategies. We assume $\mathcal{H}$ satisfies the following conditions:
\begin{assumption}\label{a1}
 $\mathcal{C}_\mathcal{H}(\mathcal{P})$ is (i) convex, and (ii) closed.
\end{assumption}
\begin{example}\label{ex2.1}
Let  $\cH:=\prod_{i=1}^d[\underline a^i,\overline a^i]$ for some $\underline a^i,\overline a^i\in\R$ with $\underline a^i\leq\overline a^i,\ i=1\dotso,d$.
Then $\cH$ satisfies \asref{a1} for any $\cP\subset\mathfrak{P}(\Omega)$. Indeed, $\cH\subset\R^d$ is a bounded, closed, convex set with finitely many vertices, and so is $\cH(\cP)$. Hence the generated cone $\mathcal{C}_\cH(\cP)$ is convex and closed.
\end{example}
Define 
$$\mathcal{Q}:=\{Q\in\mathfrak{P}(\Omega):\ Q\lll\mathcal{P},\ E_Q|\Delta S|<\infty\text{ and } E_Q[H\Delta S]\leq 0,\ \forall H\in\mathcal{H} \}.$$
The following is the main result of this section:
\begin{theorem}\label{t1}
Let \asref{a1} hold. Then NA$(\mathcal{P})$ holds if and only if for any $P\in\mathcal{P}$, there exists $Q\in\mathcal{Q}$ dominating $P$.
\end{theorem}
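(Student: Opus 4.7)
The direction ``existence of $Q$ implies NA$(\mathcal{P})$'' is routine. If $H\in\mathcal{H}$ satisfies $H\Delta S\geq 0$ $\mathcal{P}$-q.s., fix $P_0\in\mathcal{P}$ and let $Q\in\mathcal{Q}$ dominate it. Since $Q\lll\mathcal{P}$ there is $P'\in\mathcal{P}$ with $Q\ll P'$, and $H\Delta S\geq 0$ $P'$-a.s.\ gives $H\Delta S\geq 0$ $Q$-a.s.; together with $E_Q[H\Delta S]\leq 0$ this forces $H\Delta S=0$ $Q$-a.s., and then $P_0\ll Q$ yields $H\Delta S=0$ $P_0$-a.s. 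As $P_0\in\mathcal{P}$ was arbitrary, $H\Delta S=0$ $\mathcal{P}$-q.s., so NA$(\mathcal{P})$ holds.

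For the converse, fix $P\in\mathcal{P}$; by weighting $P$ with a bounded strictly positive density proportional to $e^{-|\Delta S|}$ I may assume $E_P|\Delta S|<\infty$ (the modified measure is equivalent to $P$, so any $Q$ dominating it dominates $P$). The plan has two stages. Stage (A): produce $\tilde P\in\mathcal{P}$ with $P\ll\tilde P$, $E_{\tilde P}|\Delta S|<\infty$, and with a \emph{strong} no-arbitrage property under $\tilde P$ for the cone $\mathcal{C}_\mathcal{H}(\mathcal{P})$, namely that $H\in\mathcal{C}_\mathcal{H}(\mathcal{P})$ with $H\Delta S\geq 0$ $\tilde P$-a.s.\ forces $H\Delta S=0$ $\tilde P$-a.s. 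Stage (B): given such $\tilde P$, apply the classical single-measure FTAP with closed convex cone of strategies. By \asref{a1}, $\mathcal{C}_\mathcal{H}(\mathcal{P})$ is a closed convex cone living in the finite-dimensional space $N^\perp(\mathcal{P})$, so that $\{H\Delta S:H\in\mathcal{C}_\mathcal{H}(\mathcal{P})\}$ is closed in $L^0(\tilde P)$; combined with the strong NA from stage (A), a Kreps--Yan / Hahn--Banach separation in $L^1(\tilde P)$ produces $Q\sim\tilde P$ with $E_Q|\Delta S|<\infty$ and $E_Q[H\Delta S]\leq 0$ for every $H\in\mathcal{C}_\mathcal{H}(\mathcal{P})$. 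Since projecting a general $H\in\mathcal{H}$ onto $N^\perp(\mathcal{P})$ does not change $H\Delta S$ modulo $\mathcal{P}$-null sets, hence modulo $Q$-null sets (because $Q\ll\tilde P\in\mathcal{P}$), the inequality extends to all $H\in\mathcal{H}$. Thus $Q\in\mathcal{Q}$, and $P\ll\tilde P\sim Q$ gives the required domination.

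The principal obstacle is stage (A), which plays the role of \cite[Lemma 3.3]{Nutz2} in the presence of constraints. The unconstrained strategy---induct on the number of stocks and separate hyperplanes---breaks down because $\mathcal{C}_\mathcal{H}(\mathcal{P})$ is not a linear subspace and cannot be peeled off one dimension at a time. I intend to replace it by a \emph{finite covering} argument. Suppose stage (A) fails: then every candidate $\tilde P\in\mathcal{P}$ with $P\ll\tilde P$ admits an arbitrage direction $H_{\tilde P}\in\mathcal{C}_\mathcal{H}(\mathcal{P})$. After normalising $H_{\tilde P}$ onto the unit sphere of the finite-dimensional space $N^\perp(\mathcal{P})$, compactness lets me extract finitely many directions whose small neighbourhoods cover the relevant portion of the sphere. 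Convex-combining the associated measures---staying in $\mathcal{P}$ by convexity of $\mathcal{P}$---and the associated directions---staying in $\mathcal{C}_\mathcal{H}(\mathcal{P})$ by \asref{a1}---then produces a single $H$ whose gain $H\Delta S$ is nonnegative $\mathcal{P}$-q.s.\ and strictly positive on a non-$\mathcal{P}$-polar set, violating NA$(\mathcal{P})$. Closedness and convexity of $\mathcal{C}_\mathcal{H}(\mathcal{P})$ in \asref{a1} are exactly what make the finite aggregation land inside $\mathcal{C}_\mathcal{H}(\mathcal{P})$, and hence be realisable by a strategy in $\mathcal{H}$.
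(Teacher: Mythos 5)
Your two-stage plan (Stage (A): manufacture a reference measure $\tilde P\in\mathcal{P}$ dominating $P$ with a strong no-arbitrage property; Stage (B): run a Kreps--Yan separation for that single measure, exploiting that $\mathcal{C}_\mathcal{H}(\mathcal{P})$ is a closed convex cone in the finite-dimensional space $N^\perp(\mathcal{P})$) is exactly the decomposition the paper uses (its Lemma~\ref{l12} followed by Lemma~\ref{l1}), and you correctly identify the finite-covering idea as the replacement for the induction/separation in the unconstrained case. However, the execution of Stage~(A) as you sketch it has a genuine gap.

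Your Stage~(A) runs by contradiction: assume every candidate $\tilde P\gg P$ admits an ``arbitrage direction'' $H_{\tilde P}\in\mathcal{C}_\mathcal{H}(\mathcal{P})$, normalise onto the unit sphere, extract a finite subcover, and convex-combine measures and directions to produce a $\mathcal{P}$-q.s.\ arbitrage. This does not work: each $H_{\tilde P}$ satisfies $H_{\tilde P}\Delta S\geq 0$ only $\tilde P$-a.s.\ (not $\mathcal{P}$-q.s.), so convex-combining the $H_{\tilde P}$'s gives no control on the sign of $H\Delta S$ off the support of a particular $\tilde P$, and no violation of NA$(\mathcal{P})$ follows. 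Moreover, the set $\{H_{\tilde P}/\|H_{\tilde P}\|:\tilde P\}$ is not obviously compact, so the compactness you appeal to is not available. The argument that actually works is \emph{direct}, not by contradiction: for every $H$ in the (compact, by Assumption~\ref{a1}) unit sphere $\mathbb{H}=\{H\in\mathcal{C}_\mathcal{H}(\mathcal{P}):\|H\|=1\}$, NA$(\mathcal{P})$ supplies $P_H\in\mathcal{P}$ with $P_H(H\Delta S<0)>0$; one then shows this inequality persists on a small ball around $H$, covers $\mathbb{H}$ by finitely many such balls $B(H_i,\eps_{H_i})$, and takes a convex combination $P'=\sum a_iP_{H_i}\in\mathcal{P}$. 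No combining of directions is involved.

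A second, related gap: the conclusion you ask Stage~(A) to deliver---$H\in\mathcal{C}_\mathcal{H}(\mathcal{P})$, $H\Delta S\geq 0$ $\tilde P$-a.s.\ implies $H\Delta S=0$ $\tilde P$-a.s.---is not strong enough for your Stage~(B) closedness argument. After normalising an unbounded sequence $(H_n)\subset\mathcal{C}_\mathcal{H}(\mathcal{P})$ you obtain $H\in\mathcal{C}_\mathcal{H}(\mathcal{P})$ with $\|H\|=1$ and $H\Delta S\geq 0$ $\tilde P$-a.s.; your Stage~(A) gives only $H\Delta S=0$ $\tilde P$-a.s., i.e.\ $H\in N(\tilde P)$. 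To conclude $H=0$ (the contradiction with $\|H\|=1$) you need $\mathcal{C}_\mathcal{H}(\mathcal{P})\cap N(\tilde P)=\{0\}$, which requires $N^\perp(\tilde P)=N^\perp(\mathcal{P})$---a property the covering construction alone does not guarantee. The paper handles this with a secondary finite induction in Lemma~\ref{l12} (enlarging the support to force $N^\perp(P'')=N^\perp(\mathcal{P})$). Alternatively, you could upgrade the Stage~(A) conclusion to the covering argument's actual output, $\tilde P(H\Delta S<0)>0$ for \emph{every} $H\in\mathbb{H}$, which makes the normalisation step contradict immediately without any appeal to $N^\perp(\tilde P)$; but as stated your Stage~(A) does not deliver this either.

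Stage~(B) itself is fine modulo these issues (the extension from $\mathcal{C}_\mathcal{H}(\mathcal{P})$ to all of $\mathcal{H}$ via projection onto $N^\perp(\mathcal{P})$ is correct, and the Kreps--Yan input is $K-L_+^0$ closed, which you implicitly acknowledge). So the gap is localised entirely in Stage~(A): the contrapositive framing does not produce a $\mathcal{P}$-q.s.\ arbitrage, and the statement of Stage~(A) needs strengthening before it feeds correctly into the closedness argument.
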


\subsection{Proof for \thref{t1}}
Let us first prove the following lemma, which is the simplified version of \thref{t1} when $\mathcal{P}$ consists of a single probability measure.
\begin{lemma}\label{l1}
Let $P\in\mathfrak{P}(\Omega)$ and \asref{a1} w.r.t. $\mathcal{C}_\mathcal{H}(P)$ hold. Then $NA(P)$ holds if and only if  there exists $Q\sim P$, such that $E_Q|\Delta S|<\infty$ and $E_Q[H\Delta S]\leq 0$, for any $H\in\mathcal{H}$.
\end{lemma}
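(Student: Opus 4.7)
The ``if'' direction is immediate: if $H\in\mathcal{H}$ with $H\Delta S\geq 0$ $P$-a.s., then $H\Delta S\geq 0$ $Q$-a.s.\ by equivalence of $P$ and $Q$, and combining with $E_Q[H\Delta S]\leq 0$ forces $H\Delta S=0$ $Q$-a.s., hence $P$-a.s.

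For the ``only if'' direction I plan to adapt the classical one-period Dalang--Morton--Willinger route to the closed convex cone $\mathcal{C}_\mathcal{H}(P)$. First, $NA(P)$ extends from $\mathcal{H}$ to $\mathcal{C}_\mathcal{H}(P)$: if $cH'\Delta S\geq 0$ $P$-a.s.\ with $c\geq 0$ and $H'\in\mathcal{H}(P)$, pick $H\in\mathcal{H}$ with $H'=\mathrm{proj}_{N^\perp(P)}(H)$; since $H-H'\in N(P)$ one has $H\Delta S=H'\Delta S$ $P$-a.s., and $NA(P)$ on $\mathcal{H}$ forces this to vanish. Second, I reduce to integrable $\Delta S$ by replacing $P$ with the equivalent measure $P'$ of density $e^{-|\Delta S|}/E_P[e^{-|\Delta S|}]$: then $E_{P'}|\Delta S|<\infty$, $NA$ is preserved (since $P'\sim P$), and any $Q\sim P'$ will automatically be $\sim P$. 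I then consider the convex cone
\[
C\ :=\ \{H\Delta S-g:\ H\in\mathcal{C}_\mathcal{H}(P),\ g\in L_+^1(P')\}\subset L^1(P'),
\]
which contains $-L_+^1(P')$. The condition $C\cap L_+^1(P')=\{0\}$ is immediate from the cone form of $NA$: if $H\Delta S-g\geq 0$ with $g\geq 0$, then $H\Delta S\geq g\geq 0$, so $H\Delta S=0$ and hence $g=0$.

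The crux is the $L^1(P')$-closedness of $C$, which is a cone analogue of Stricker's lemma. Suppose $H^n\Delta S-g^n\to Y$ in $L^1(P')$. If $(H^n)$ is bounded in $\R^d$, pass to a subsequence with $H^n\to H$; \asref{a1}(ii) places $H\in\mathcal{C}_\mathcal{H}(P)$, so $g^n\to H\Delta S-Y$ in $L^1$ and the limit is in $L_+^1$. If $|H^n|\to\infty$, normalize $\tilde H^n:=H^n/|H^n|\in\mathcal{C}_\mathcal{H}(P)$ (by the cone property) and $\tilde g^n:=g^n/|H^n|\geq 0$; a subsequence yields $\tilde H^n\to\tilde H$ with $|\tilde H|=1$ and $\tilde H\in\mathcal{C}_\mathcal{H}(P)$, while $\tilde H^n\Delta S-\tilde g^n=(H^n\Delta S-g^n-Y)/|H^n|+Y/|H^n|\to 0$ in $L^1(P')$, so a further a.s.-convergent subsequence gives $\tilde g^n\to\tilde H\Delta S$ $P$-a.s., forcing $\tilde H\Delta S\geq 0$ $P$-a.s. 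The cone $NA$ then yields $\tilde H\Delta S=0$ $P$-a.s., i.e.\ $\tilde H\in N(P)\cap N^\perp(P)=\{0\}$, contradicting $|\tilde H|=1$. Thus only the bounded case can occur and $C$ is $L^1$-closed; this is the main obstacle in the proof.

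Finally, Yan's theorem applied to the closed convex cone $C\supset -L_+^1(P')$ with $C\cap L_+^1(P')=\{0\}$ furnishes a bounded $Z>0$ $P'$-a.s.\ with $E_{P'}[Z]=1$ and $E_{P'}[ZX]\leq 0$ for every $X\in C$. Setting $dQ:=Z\,dP'$ yields $Q\sim P$, $E_Q|\Delta S|<\infty$ by boundedness of $Z$ together with $E_{P'}|\Delta S|<\infty$, and specializing to $X=H\Delta S\in C$ for $H\in\mathcal{H}$ gives $E_Q[H\Delta S]\leq 0$, completing the argument.
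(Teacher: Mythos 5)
Your proposal is correct and follows essentially the same approach as the paper: both reduce to the case $E|\Delta S|<\infty$ under an equivalent measure, prove closedness of the cone $\{H\Delta S - g : H\in\mathcal{C}_\mathcal{H}(P),\, g\geq 0\}$ by normalizing an unbounded sequence of strategies and invoking the closedness of $\mathcal{C}_\mathcal{H}(P)$ together with the cone form of NA$(P)$ and $N(P)\cap N^\perp(P)=\{0\}$, and then apply the Kreps--Yan theorem. The only (cosmetic) difference is that you carry out the closedness argument directly in $L^1(P')$ whereas the paper first shows $K-L_+^0$ is closed in $L^0$ and then intersects with $L^1$; you also make explicit the step that NA$(P)$ over $\mathcal{H}$ extends to the cone $\mathcal{C}_\mathcal{H}(P)$, which the paper uses implicitly.
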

\begin{proof}
Sufficiency is obvious. We shall prove the necessity in two steps. W.l.o.g. we assume that $E_P|\Delta S|<\infty$ (see e.g., \cite[Lemma 3.2]{Nutz2}).\\
\textbf{Step 1:} In this step, we will show that $K-L_+^0$ is closed in $L^0$, where
$$K:=\{H\Delta S:\ H\in\mathcal{C}_\mathcal{H}(P)\}.$$
 Let $X_n=H_n\Delta S-Y_n\stackrel{P}{\rightarrow}X$, where $H_n\in\mathcal{C}_\mathcal{H}(P)$ and $Y_n\geq 0$. Without loss of generality, assume $X_n\rightarrow X,\ P$-a.s..  If $(H_n)_n$ is not bounded, then let $0<||H_{n_k}||\rightarrow\infty$ and we have that
$$\frac{H_{n_k}}{||H_{n_k}||}\Delta S=\frac{X_{n_k}}{||H_{n_k}||}+\frac{Y_{n_k}}{||H_{n_k}||}\geq\frac{X_{n_k}}{||H_{n_k}||}.$$
Taking limit on both sides along a further sub-sequence, we obtain that $H\Delta S\geq 0\ P$-a.s. for some $H\in\mathbb{R}^d$ with $||H||=1$. Since $\mathcal{C}_\mathcal{H}(P)$ is closed, $H\Delta S\in\mathcal{C}_\mathcal{H}(P)$. By NA$(P)$, $H\Delta S=0\ P$-a.s., which implies $H\in N(P)\cap N^\perp(P)=\{0\}$. This contradicts $||H||=1$. Therefore, $(H_n)_n$ is bounded, and thus there exists a subsequence $(H_{n_j})_j$ convergent to some $H'\in\mathcal{C}_\mathcal{H}(P)$. Then
$$0\leq Y_{n_j}=H_{n_j}\Delta S-X_{n_j}\rightarrow H'\Delta S-X=:Y,\quad P\text{-a.s..}$$
Then $X=H'\Delta S-Y\in K-L_+^0$.\\
\textbf{Step 2:} From Step 1, we know that $K':=(K-L_+^0)\cap L^1$ is a closed, convex cone in $L^1$, and contains $-L_+^\infty$. Also by NA$(P)$, $K'\cap L_+^1=\{0\}$. Then Kreps-Yan theorem (see e.g., \cite[Theorem 1.61]{SF}) implies the existence of $Q\sim P$ with $dQ/dP\in L_+^\infty(P)$, such that $E_Q[H\Delta S]\leq 0$ for any $H\in\mathcal{H}$.
\end{proof}
\begin{remark}
The FTAP under a single probability measure with constraints is analyzed in \cite[Chapter 9]{SF}. However, although the idea is quite insightful, the result there is not correct: what we need is the closedness of the generated cone $\mathcal{C}_\cH(P)$ instead of the closedness of $\mathcal{H}(P)$. (In this sense, our result is different from \cite{CW}; in \cite{CW} it is the closedness of the corresponding projection that matters.) Below is a counter-example to \cite[Theorem 9.9]{SF}.
\end{remark}
\begin{example}\label{ex2.2}
Consider the one-period model: there are two stocks $S^1$ and $S^2$ with the path space $\{(1,1)\}\times\{(s,0):\ s\in[1,2]\}$; let 
$$\mathcal{H}:=\{(h_1,h_2):\ h_1^2+(h_2-1)^2\leq 1\}.$$
be the set of admissible trading strategies; let $P$ be a probability measure on this path space such that $S_1^1$ is uniformly distributed on $[1,2]$.  It is easy to see that NA$(P)$ holds, and $\mathcal{H}$ satisfies the assumptions (a), (b) and (c) on \cite[page 350]{SF}. Let $H=(h_1,h_2)$ such that $H\Delta S=0,\ P$-a.s. Then $h_1(S_1^1-1)=h_2,\ P$-a.s., which implies $h_1=h_2=0$. By \cite[Remark 9.1]{SF}, $\mathcal{H}$ also satisfies assumption (d) on \cite[page 350]{SF}. 

Now suppose \cite[Theorem 9.9]{SF} holds, then there exists $Q\sim P$, such that 
\begin{equation}\label{e1}
E_Q[H\Delta S]\leq 0,\quad\forall H\in\mathcal{H}.
\end{equation}
Since $Q\sim P,\ E_Q(S_1^1-1)>0$. Take $(h_1,h_2)\in\mathcal{H}$ with $h_1,h_2>0$ and $h_2/h_1<E_Q(S_1^1-1)$. Then 
$$h_1E_Q(S_1^1-1)- h_2>0,$$
which contradicts \eqref{e1}.

In fact, it is not hard to see that in this example,
$$\mathcal{C}_\mathcal{H}(P)=\{(h_1,h_2): h_2>0 \text{ or } h_1=h_2=0\}$$
is not closed.
\end{example}

\begin{lemma}\label{l12}
Let \asref{a1}(ii) hold. Then there exists $P''\in\mathcal{P}$, such that $N^\perp(P'')=N^\perp(\mathcal{P})$ and NA$(P'')$ holds. 
\end{lemma}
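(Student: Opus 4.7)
The plan is to construct $P''$ as a finite convex combination of elements of $\mathcal{P}$, which is legitimate since $\mathcal{P}$ is convex. Two monotonicity observations about finite mixtures $P = \sum_i \alpha_i P_i$ (with $\alpha_i > 0$, $P_i \in \mathcal{P}$) drive the argument: the support of $\Delta S$ under $P$ is $\bigcup_i \text{supp}_{P_i}(\Delta S)$, so $N^\perp(P) = \sum_i N^\perp(P_i)$; and the ``null cone''
\[
\mathcal{K}(P) := \{H \in \mathcal{C}_\mathcal{H}(\mathcal{P}) : H\Delta S \geq 0,\ P\text{-a.s.}\}
\]
satisfies $\mathcal{K}(P) = \bigcap_i \mathcal{K}(P_i)$. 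Mixing therefore enlarges $N^\perp$ and shrinks $\mathcal{K}$, and I would exploit both directions in parallel.

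First I would produce $P_0 \in \mathcal{P}$ with $N^\perp(P_0) = N^\perp(\mathcal{P})$. Because $N^\perp(\mathcal{P}) = \text{span}(\bigcup_{P\in\mathcal{P}} \text{supp}_P(\Delta S)) = \sum_{P\in\mathcal{P}} N^\perp(P)$ and $\mathbb{R}^d$ is finite-dimensional, finitely many $Q_1,\dots,Q_k \in \mathcal{P}$ already realise this sum; setting $P_0 := \frac{1}{k}\sum_j Q_j$ then suffices by the first mixing identity.

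The heart of the argument is producing $P_1, \dots, P_m \in \mathcal{P}$ with $\bigcap_i \mathcal{K}(P_i) = \{0\}$. Under the standing NA$(\mathcal{P})$ hypothesis of \thref{t1} (in whose proof this lemma is used), one has $\bigcap_{P\in\mathcal{P}} \mathcal{K}(P) = \{0\}$, because any $H$ in the intersection satisfies $H\Delta S \geq 0$ $\mathcal{P}$-q.s., hence $H\Delta S = 0$ $\mathcal{P}$-q.s., which forces $H \in N(\mathcal{P}) \cap N^\perp(\mathcal{P}) = \{0\}$. To pass from this infinite intersection to a finite one I would cut by the unit sphere: $S := \mathcal{C}_\mathcal{H}(\mathcal{P}) \cap \{\|H\| = 1\}$ is compact by \asref{a1}(ii), each $\mathcal{K}(P) \cap S$ is closed in $S$ (pointwise limits preserve nonnegativity), and $\bigcap_{P\in\mathcal{P}} (\mathcal{K}(P) \cap S) = \{0\} \cap S = \emptyset$. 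The finite intersection property on the compactum $S$ then delivers $P_1,\dots,P_m \in \mathcal{P}$ with $\bigcap_i \mathcal{K}(P_i) \cap S = \emptyset$, and since $\bigcap_i \mathcal{K}(P_i)$ is a cone, this is equivalent to $\bigcap_i \mathcal{K}(P_i) = \{0\}$.

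Finally, $P'' := \frac{1}{m+1}\bigl(P_0 + \sum_{i=1}^m P_i\bigr) \in \mathcal{P}$ inherits $N^\perp(P'') \supseteq N^\perp(P_0) = N^\perp(\mathcal{P})$ (with equality automatic since $P'' \in \mathcal{P}$) and $\mathcal{K}(P'') \subseteq \bigcap_i \mathcal{K}(P_i) = \{0\}$. The latter is equivalent to NA$(P'')$: any $H \in \mathcal{H}$ with $H\Delta S \geq 0$ $P''$-a.s. projects to $H^* \in \mathcal{H}(\mathcal{P}) \subseteq \mathcal{C}_\mathcal{H}(\mathcal{P})$ with $H^*\Delta S = H\Delta S$ $P''$-a.s.\ (because $N(P'') = N(\mathcal{P})$), so $H^* \in \mathcal{K}(P'') = \{0\}$ and hence $H\Delta S = 0$ $P''$-a.s. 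The main obstacle is the finite-intersection step, and this is exactly where \asref{a1}(ii) is essential: without closedness of $\mathcal{C}_\mathcal{H}(\mathcal{P})$, $S$ need not be compact, and \exref{ex2.2} shows that the whole construction can collapse.
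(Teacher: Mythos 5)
Your proof is correct, and like the paper's it implicitly relies on the standing NA$(\mathcal{P})$ hypothesis, which the lemma's statement omits but which is needed (and which you correctly flag). Both arguments rest on the same two pillars---compactness of $\mathbb{H}:=\mathcal{C}_\mathcal{H}(\mathcal{P})\cap\{\|H\|=1\}$, which is exactly where \asref{a1}(ii) enters, and convexity of $\mathcal{P}$ for mixing---but you implement the passage to a finite subfamily in the dual way. The paper works from the strategy side: for each $H\in\mathbb{H}$ it fixes $P_H\in\mathcal{P}$ with $P_H(H\Delta S<0)>0$, proves a quantitative small-ball estimate $P_H(H'\Delta S<0)>0$ for $H'\in B(H,\eps_H)$ with $\eps_H=\delta/M$, and then extracts a finite subcover of $\mathbb{H}$. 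You work from the measure side: each set $\mathcal{K}(P)\cap\mathbb{H}$ is closed in the compactum $\mathbb{H}$ and $\bigcap_{P\in\mathcal{P}}\mathcal{K}(P)\cap\mathbb{H}=\emptyset$, so the finite intersection property yields a finite subfamily directly, sidestepping the $\eps_H$ stability estimate entirely. The $N^\perp$ step is likewise handled differently: the paper saturates $N^\perp$ by iteratively enlarging a dominating measure finitely many times, whereas you observe that $N^\perp(\mathcal{P})=\sum_{P\in\mathcal{P}}N^\perp(P)$ is finite-dimensional and hence realized by finitely many $P$'s, so one further mixing produces $P_0$ with $N^\perp(P_0)=N^\perp(\mathcal{P})$. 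Both routes then mix everything into a single $P''$ and invoke $N(P'')=N(\mathcal{P})$ to transfer the conclusion about $\mathcal{C}_\mathcal{H}(\mathcal{P})$ back to genuine strategies in $\mathcal{H}$ via the projection $H\mapsto\text{proj}_{N^\perp(\mathcal{P})}(H)$, a bridging step you spell out correctly. The two proofs are equivalent in substance, but yours is organizationally cleaner: a single mixing step, no quantitative stability argument, no iterative saturation.
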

\begin{proof}
Denote $\mathbb{H}:=\{H\in\mathcal{C}_\mathcal{H}(\mathcal{P}):\ ||H||=1\}.$ For any $H\in\mathbb{H}\subset N^\perp(\mathcal{P})$, by $NA(\mathcal{P})$, there exists $P_H\in\mathcal{P}$, such that $P_H(H\Delta S<0)>0$. It can be further shown that there exists $\eps_H>0$, such that for any $H'\in B(H,\eps_H)$, 
\begin{equation}\label{e28}
P_H(H'\Delta S<0)>0,
\end{equation}
where $B(H,\eps_H):=\{H''\in\mathbb{R}^d:\ ||H''-H||<\eps_H\}$. Indeed, there exists some $\delta>0$ such that
$P_H(H\Delta S<-\delta)>0.$
Then there exists some $M>0$, such that
$P_H(H\Delta S<-\delta,\ ||\Delta S||<M)>0.$
Taking $\eps_H:=\delta/M$, we have that for any $H'\in B(H,\eps_H)$, $P_H(H'\Delta S<0,\ ||\Delta S||<M)>0,$
which implies \eqref{e28}.

Because $\mathbb{H}\subset\cup_{H\in\mathbb{H}}B(H,\eps_H)$ and $\mathbb{H}$ is compact from \asref{a1}, there exists a finite cover of $\mathbb{H}$, i.e., $\mathbb{H}\subset\cup_{i=1}^nB(H_i,\eps_{H_i})$. Let $P'=\sum_{i=1}^na_iP_{H_i}$, with $\sum_{i=1}^na_i=1$ and $a_i>0,\ i=1,\dotso,n$. Then $P'\in\mathcal{P}$, and $P'(H\Delta S<0)>0$ for any $H\in\mathbb{H}$. 

Obviously, $N^\perp(P')\subset N^\perp(\mathcal{P})$.  If $N^\perp(P')=N^\perp(\mathcal{P})$, then let $P''=P'$. Otherwise, take $H\in N^\perp(\mathcal{P})\cap N(P')$. Then there exists $R_1\in\mathcal{P}$, such that $R_1(H\Delta S\neq 0)>0$. Let $R_1'=(P'+R_1)/2$. Then $P'\ll R_1'\in\mathcal{P}$, and thus $N^\perp(R_1')\supset N^\perp(P')$. Since $H\in N(P')\setminus N(R_1')$, we have that $N^\perp(R_1')\supsetneqq N^\perp(P')$. If $N^\perp(R_1')\subsetneqq N^\perp(\mathcal{P})$, then we can similarly construct $R_2'\in\mathcal{P}$, such that $R_2'\gg R_1'$ and $N^\perp(R_2')\supsetneqq N^\perp(R_1')$. Since $N^\perp(\mathcal{P})$ is a finite dimensional vector space, after finite such steps, we can find such $P''\in\mathcal{P}$ dominating $P'$ with $N^\perp(P'')=N^\perp(\mathcal{P})$. For any $H\in\mathbb{H}$, $P''(H\Delta S<0)>0$ since $P''\gg P'$. This implies that NA$(P'')$ holds.
\end{proof}

\begin{proof}[\textbf{Proof of \thref{t1}}]
\textit{Sufficiency.} If not, there exists $H\in\mathcal{H}$ and $P\in\mathcal{P}$, such that $H\Delta S\geq 0,\ P-a.s.$ and $P(H\Delta S>0)>0$. Take $Q\in\mathcal{Q}$ with $Q\gg P$. Then $E_Q[H\Delta S]\leq 0$, which contradicts $H\Delta S\geq 0\ Q-a.s.$ and $Q(H\Delta S>0)>0$.\\
\textit{Necessity.} Take $P\in\mathcal{P}$. By \leref{l12} there exists $P''\in\cP$ such that $N^\perp(P'')=N^\perp(\mathcal{P})$ and NA$(P'')$ holds. Let $R:=(P+P'')/2\in\cP$. Then $N^\perp(R)=N^\perp(P'')=N^\perp(\mathcal{P})$, and thus $\mathcal{C}_\mathcal{H}(R)=\mathcal{C}_\mathcal{H}(\mathcal{P})$ which is convex and closed by \asref{a1}. Besides, NA$(P'')$ implies that for any $H\in\mathcal{C}_\mathcal{H}(R)\setminus\{0\}=\mathcal{C}_\mathcal{H}(P'')\setminus\{0\}$, $P''(H\Delta S<0)>0$, and thus $R(H\Delta S<0)>0$ since $R\gg P''$. This shows that NA$(R)$ holds. From \leref{l1}, there exists $Q\sim R\gg P$, such that  $E_Q|\Delta S|<\infty$ and $E_Q[H\Delta S]\leq 0$ for any $H\in\mathcal{H}$.
\end{proof} 

\section{The FTAP in multiple periods}
We derive the FTAP in multiple periods in this section, and \thref{t2} is our main result. We will reduce it to a one-step problem and apply Theorem~\ref{t1}. 

\subsection{The set-up and the main result}
We use the set-up in \cite{Nutz2}. Let $T\in\mathbb{N}$ be the time Horizon and let $\Omega$ be a Polish space. For $t\in\{0,1,\dotso,T\}$, let $\Omega_t:=\Omega^t$ be the $t$-fold Cartesian product, with the convention that $\Omega_0$ is a singleton. We denote by $\mathcal{F}_t$ the universal completion of $\mathcal{B}(\Omega_t)$, and we shall often treat $\Omega_t$ as a subspace of $\Omega_T$. For each $t\in\{0,\dotso,T-1\}$ and $\omega\in\Omega_t$, we are given a nonempty convex set $\mathcal{P}_t(\omega)\subset\mathfrak{P}(\Omega)$ of probability measures.  Here $\mathcal{P}_t$ represents the possible models for the $t$-th period, given state $\omega$ at time $t$. We assume that for each $t$, the graph of $\mathcal{P}_t$ is analytic, which ensures by the Jankov-von Neumann Theorem (see, e.g., \cite[Proposition 7.49]{Shreve}) that $\mathcal{P}_t$ admits a u.m. selector, i.e., a u.m. kernel $P_t:\ \Omega_t\rightarrow \mathfrak{P}(\Omega)$ such that $P_t(\omega)\in\mathcal{P}_t(\omega)$ for all $\omega\in\Omega_t$. Let
\begin{equation}\notag
\mathcal{P}:=\{P_0\otimes\dotso\otimes P_{T-1}:\ P_t(\cdot)\in\mathcal{P}_t(\cdot),\ t=0,\dotso,T-1\},
\end{equation}
where each $P_t$ is a u.m. selector of $\mathcal{P}_t$, and
$$P_0\otimes\dotso\otimes P_{T-1}(A)=\int_{\Omega}\dotso\int_{\Omega} 1_A(\omega_1,\dotso,\omega_T)P_{T-1}(\omega_1,\dotso,\omega_{T-1};d\omega_T)\dotso P_0(d\omega_1),\ \ \ A\in\Omega_T.$$ \footnotetext{In order not to burden the reader with further notation we prefer use the same notation $\mathcal{P}$ for the set of probability measures in one-period models and multi-period models. We will do the same for other sets of probability measures that appear later in the paper and also for the set of admissible strategies.}

Let $S_t=(S_t^1,\dotso,S_t^d):\Omega_t\rightarrow\mathbb{R}^d$ be Borel measurable, which represents the price at time $t$ of a stock $S$ that can be traded dynamically in the market.

For each $t\in\{0,\dotso,T-1\}$ and $\omega\in\Omega_t$, we are given a set $\mathcal{H}_t(\omega)\subset\mathbb{R}^d$, which is thought as the set of admissible controls for the $t$-th period, given state $\omega$ at time $t$. We assume for each $t$,  graph$(\mathcal{H}_t)$ is analytic, and thus admits a u.m. selector; that is, an $\mathcal{F}_t$-measurable function $H_t(\cdot):\ \Omega_t\mapsto\mathbb{R}^d$, such that $H_t(\omega)\in\mathcal{H}_t(\omega)$. We introduce the set of admissible portfolio controls $\mathcal{H}$:
$$\mathcal{H}:=\left\{(H_t)_{t=0}^{T-1}:\ H_t \text{ is a u.m. selector of } \mathcal{H}_t,\ t=0,\dotso, T-1\right\}.$$
Then for any $H\in\mathcal{H}$, $H$ is an adapted process. We make the following assumptions on $\mathcal{H}$.
\begin{assumption}\label{a2} {\ }
\begin{itemize}
\item[(i)] $0\in\mathcal{H}_t(\omega)$, for $\omega\in\Omega_t$, $t=0,\dotso,T-1$.
\item[(ii)] $\mathcal{C}_{\mathcal{H}_t(\omega)}(\mathcal{P}_t(\omega))$ is closed and convex, for $\omega\in\Omega_t$, $t=0,\dotso,T-1$.
\item[(iii)] The set
$$\Psi_{\mathcal{H}_t}:=\{(\omega,Q)\in\Omega_t\times\mathfrak{P}(\Omega):\ E_Q|\Delta S_t(\omega,\cdot)|<\infty\ \text{and }E_Q[y\Delta S_t(\omega,\cdot)]\leq 0,\ \forall y\in\mathcal{H}_t(\omega)\}$$
is analytic, for $t=0,\dotso,T-1$.
\end{itemize}
\end{assumption}

Define
\begin{equation}\label{e5}
\begin{split}
\mathcal{Q}:=&\{Q\in\mathfrak{P}(\Omega_T):\ Q\lll\mathcal{P},\ E_Q[|\Delta S_t|\ |\mathcal{F}_t]<\infty\ Q\text{-a.s. } t=0,\dotso,T-1,\\
&H\cdot S\text{ is a }Q\text{-local-supermartingale}\ \forall H\in\mathcal{H}\}.
\end{split}
\end{equation}
Below is the main theorem of this section:
\begin{theorem}\label{t2}
Under \asref{a2}, NA$(\mathcal{P})$ holds if and only if for each $P\in\mathcal{P}$, there exists $Q\in\mathcal{Q}$ dominating $P$. 
\end{theorem}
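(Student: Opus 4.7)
The plan is to prove the two directions separately, with the weight of the work on necessity, where I would follow the general strategy of \cite{Nutz2}: apply the one-period \thref{t1} fiberwise at each time step and then glue the results through measurable selection and Fubini into a single measure on $\Omega_T$.

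Sufficiency is the standard implication that no-arbitrage holds whenever a local-(super)martingale measure exists. Suppose for each $P\in\cP$ there is $Q\in\cQ$ with $Q\gg P$, and assume for contradiction that some $H\in\cH$ and $P\in\cP$ yield $(H\cdot S)_T\geq 0$ $\cP$-q.s.\ together with $P((H\cdot S)_T>0)>0$. Taking the dominating $Q\in\cQ$, one has $(H\cdot S)_T\geq 0$ $Q$-a.s., and a standard local-supermartingale argument in finite discrete time (localization combined with Fatou) gives $E_Q[(H\cdot S)_T]\leq 0$; hence $(H\cdot S)_T=0$ $Q$-a.s., which contradicts $Q((H\cdot S)_T>0)>0$ (a consequence of $Q\gg P$).

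For necessity, fix $P=P_0\otimes\dotso\otimes P_{T-1}\in\cP$ and reduce to single-period problems at every $(t,\omega)\in\{0,\dotso,T-1\}\times\Omega_t$. I would first verify that NA$(\cP)$ descends to NA$(\cP_t(\omega))$ on the one-step model $(\cP_t(\omega),\cH_t(\omega),\Delta S_t(\omega,\cdot))$ outside some $\cP$-polar set $N_t\subset\Omega_t$: if this failed on a non-polar set, a universally measurable selection of the local arbitrage at step $t$, padded by the zero strategy at the other times via \asref{a2}(i), would assemble into a global arbitrage in $\cH$. \asref{a2}(ii) then supplies the convexity and closedness of $\mathcal{C}_{\cH_t(\omega)}(\cP_t(\omega))$, so \thref{t1} applies at every $\omega\notin N_t$ and produces some $Q(\omega)\in\fP(\Omega)$ with $Q(\omega)\gg P_t(\omega)$, $E_{Q(\omega)}|\Delta S_t(\omega,\cdot)|<\infty$ and $E_{Q(\omega)}[y\Delta S_t(\omega,\cdot)]\leq 0$ for every $y\in\cH_t(\omega)$.

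To upgrade $\omega\mapsto Q(\omega)$ to a universally measurable kernel $Q_t$, I would invoke the Jankov--von Neumann theorem on the set-valued map whose values are exactly those admissible $Q$'s. Its graph is the intersection of $\Psi_{\cH_t}$, analytic by \asref{a2}(iii), with the graph encoding $Q\gg P_t(\omega)$, which remains analytic thanks to the universal measurability of the kernel $P_t$ together with the analytic graph of $\cP_t$. Concatenating as $Q:=Q_0\otimes\dotso\otimes Q_{T-1}$ yields a probability measure on $\Omega_T$, and Fubini together with the tower property verifies both $Q\gg P$ and the $Q$-local-supermartingale property of every $H\cdot S$ with $H\in\cH$, placing $Q$ in $\cQ$. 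The main obstacle throughout is precisely this measurability bookkeeping---pinning down an analytic description of the admissible-$Q$ map that enforces the stronger domination $Q\gg P_t(\omega)$ rather than just $Q\lll\cP_t(\omega)$, and controlling the polarity of the exceptional set $N_t$; once those are secured, the concatenation step is standard.
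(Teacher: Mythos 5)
Your architecture matches the paper's: descend NA to the one-step fibers (the paper's Lemma~\ref{l6}), apply Theorem~\ref{t1} pointwise, extract a universally measurable kernel by Jankov--von~Neumann (the paper's Lemma~\ref{l15}), and glue via the Fubini argument of \cite[Theorem~4.5]{Nutz2}. The sufficiency direction is also the right localization/Fatou argument.

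There is, however, a genuine gap in the necessity step: you select only the kernel $Q_t(\cdot)$, and you phrase the requirement as enforcing $Q(\omega)\gg P_t(\omega)$ \emph{rather than} $Q\lll\cP_t(\omega)$. In fact \emph{both} inclusions must be tracked simultaneously. The glued measure $Q=Q_0\otimes\dotso\otimes Q_{T-1}$ must lie in $\cQ$, which requires $Q\lll\cP$, i.e., $Q\ll\hat P$ for some $\hat P=\hat P_0\otimes\dotso\otimes\hat P_{T-1}\in\cP$. Knowing pointwise that $Q_t(\omega)\ll\hat P^\omega$ for \emph{some} $\hat P^\omega\in\cP_t(\omega)$ does not produce such a $\hat P$ unless $\omega\mapsto\hat P^\omega$ is itself a universally measurable kernel chosen \emph{jointly} with $Q_t$. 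This is precisely what the paper's Lemma~\ref{l15} does: it selects the pair $(Q(\cdot),\hat P(\cdot))$ from the set-valued map $\Xi$ whose graph encodes $P(\omega)\ll Q\ll\hat P\in\cP_t(\omega)$ together with the moment constraints of $\Psi_{\cH_t}$. Without the $\hat P_t$ kernel, the concatenation cannot be shown to land in $\cQ$, so the selection you describe is insufficient.

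Two smaller technical points. First, the analyticity of the set $\{(\omega,Q):\ P_t(\omega)\ll Q\}$ does not come from ``universal measurability of $P_t$ together with the analytic graph of $\cP_t$''; the analytic graph of $\cP_t$ is needed for the \emph{upper} bound $Q\ll\hat P\in\cP_t(\omega)$ (which you dropped). The enabling fact for the lower bound, and indeed for the whole selection, is that the set $\{(P,Q)\in\fP(\Omega)\times\fP(\Omega):\ Q\ll P\}$ is Borel (as used in Lemmas~\ref{l15} and \ref{l9}). Second, $P_t$ being merely u.m.\ does create a wrinkle: in the actual gluing one modifies $P_t(\cdot)$ on a $\hat P_{t-1}$-null set to obtain a Borel kernel before selecting at step $t$, and it is the \emph{upper} kernel $\hat P_{t-1}$ (not $P_{t-1}$) that serves as the reference measure for the next stage. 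Both of these details are subsumed once the pair $(Q_t,\hat P_t)$ is selected jointly.
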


\subsection{Proof of \thref{t2}}
We will first provide some auxiliary results.  The following lemma essentially says that if there is no arbitrage in $T$ periods, then there is no arbitrage in any period. It is parallel to \cite[Lemma 4.6]{Nutz2}. Our proof shall mainly focuses on the difference due to the presence of constraints and we put the proof in the appendix. 
\begin{lemma}\label{l6}
Let $t\in\{0,\dotso,T-1\}$. Then the set 
\begin{equation}\label{e17}
N_t:=\{\omega\in\Omega_t:\ \text{NA}(\mathcal{P}_t(\omega))\text{ fails }\}
\end{equation}
is u.m., and if \asref{a2}(i) and NA$(\mathcal{P})$ hold, then $N_t$ is $\mathcal{P}$-polar.
\end{lemma}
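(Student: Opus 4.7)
The plan has two parts: first establish universal measurability of $N_t$ by writing it via analytic-set operations, and then prove polarity under NA$(\cP)$ by a measurable selection and pasting argument that mirrors \cite[Lemma 4.6]{Nutz2} with adaptations for the constraint set $\cH_t$.

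For universal measurability, NA$(\cP_t(\omega))$ fails iff there exist $(H,P^*)\in\cH_t(\omega)\times\cP_t(\omega)$ with
$$\sup_{P\in\cP_t(\omega)}P\bigl(H\Delta S_t(\omega,\cdot)<0\bigr)=0\quad\text{and}\quad P^*\bigl(H\Delta S_t(\omega,\cdot)>0\bigr)>0.$$
The map $(\omega,H,P)\mapsto P(H\Delta S_t(\omega,\cdot)<0)$ is Borel on $\Omega_t\times\R^d\times\fP(\Omega)$, and \cite[Proposition 7.47]{Shreve} applied on the analytic graph of $\cP_t$ shows that its $P$-supremum is u.s.a.\ in $(\omega,H)$; the second condition yields an analytic set. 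Combining these with the analyticity of $\text{Gr}(\cH_t)$ and $\text{Gr}(\cP_t)$, and appealing to projection/complementation arguments parallel to \cite[Lemma 4.6]{Nutz2}, $N_t$ is seen to be u.m.

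For polarity under \asref{a2}(i) and NA$(\cP)$, I argue by contradiction: suppose $N_t$ is not $\cP$-polar, so some $P=P_0\otimes\dotso\otimes P_{T-1}\in\cP$ satisfies $(P_0\otimes\dotso\otimes P_{t-1})(N_t)>0$. The Jankov--von Neumann theorem, applied to the analytic set of witnesses described above, produces u.m.\ selectors $H_t^*(\omega)\in\cH_t(\omega)$ and $P_t^*(\omega)\in\cP_t(\omega)$ on $N_t$. I extend $H_t^*$ by $0$ off $N_t$ (allowed by \asref{a2}(i)) and $P_t^*$ to a global u.m.\ selector of $\cP_t$; setting $H_s^*\equiv 0$ for $s\neq t$ gives $H^*\in\cH$, and
$$\tilde P:=P_0\otimes\dotso\otimes P_{t-1}\otimes P_t^*\otimes P_{t+1}\otimes\dotso\otimes P_{T-1}\in\cP.$$

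Since $(H^*\cdot S)_T=H_t^*(\omega)\Delta S_t(\omega,\cdot)$ vanishes off $N_t$ and is $\cP_t(\omega)$-q.s.\ non-negative on $N_t$, it is $\geq 0$ $\cP$-q.s., while a Fubini computation yields
$$\tilde P\bigl((H^*\cdot S)_T>0\bigr)\geq\int_{N_t}P_t^*(\omega)\bigl(H_t^*(\omega)\Delta S_t(\omega,\cdot)>0\bigr)\,d(P_0\otimes\dotso\otimes P_{t-1})(\omega)>0,$$
contradicting NA$(\cP)$. The main obstacle, as anticipated by the authors, is that the arbitrage witness $H$ is constrained to lie in $\cH_t(\omega)$ rather than in all of $\R^d$; the analyticity of $\text{Gr}(\cH_t)$ together with \asref{a2}(i) is what enables the measurable selection and the patching to respect admissibility throughout.
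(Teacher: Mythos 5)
Your outline matches the paper's in spirit — characterize failure of NA$(\cP_t(\omega))$ via a witness set, select measurably, paste a measure and a strategy, and contradict NA$(\cP)$ — but there is a concrete gap in the measurability step that also undermines the selection in the polarity argument.

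The witness set you write down involves the condition $\sup_{P\in\cP_t(\omega)}P\bigl(H\Delta S_t(\omega,\cdot)<0\bigr)=0$. By \cite[Proposition 7.47]{Shreve} that supremum is u.s.a.\ in $(\omega,H)$, but the set where a u.s.a.\ function vanishes (equivalently, where it is $\leq 0$) is \emph{co}-analytic, not analytic. Intersecting it with the analytic graphs of $\cH_t$ and $\cP_t$ and then projecting to $\Omega_t$ therefore produces a set at the $\Sigma^1_2$ level of the projective hierarchy, and such sets are not guaranteed to be universally measurable in ZFC. For the same reason the Jankov--von Neumann theorem, which requires an analytic graph, cannot be applied to this witness set to produce the u.m.\ selectors $H_t^*$, $P_t^*$ that your polarity argument needs. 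The ``projection/complementation arguments parallel to \cite[Lemma 4.6]{Nutz2}'' you invoke are exactly the device your characterization is missing: the paper replaces the $\cP_t(\omega)$-q.s.\ condition by membership in the polar cone $\Lambda^\circ(\omega)$ of $\mathrm{supp}_{\cP_t(\omega)}(\Delta S_t(\omega,\cdot))$ and then, for each fixed $P\in\fP(\Omega_t)$, passes to a Borel-measurable $P$-a.s.\ version $\Lambda_P^\circ$ of $\Lambda^\circ$ (via \cite[(4.5)]{Nutz2}). This makes $N_t^c$ agree $P$-a.s.\ with a Borel set for every $P$, which is precisely what \cite[Lemma 7.26]{Shreve} needs for universal measurability, and it makes the graph of the witness correspondence $\Phi$ genuinely analytic so that Jankov--von Neumann applies. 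Without this device both halves of the lemma are out of reach.

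A secondary point: in the polarity step you ``extend $H_t^*$ by $0$ off $N_t$,'' but the selection produced from the (correctly Borelized) witness set only matches $N_t$ modulo a $P_*$-null set, and on the exceptional set the selected $y$ may lie in $\Lambda_{*}^\circ(\omega)$ without lying in $\Lambda^\circ(\omega)$, so $y\Delta S_t\geq 0$ would fail $\cP$-q.s.\ there. The paper therefore redefines $y:=0$ on $\{y\notin\Lambda^\circ\cap\cH_t\}$ rather than merely on the complement of the selection domain; this is the detail that guarantees $(H^*\cdot S)_T\geq 0$ $\cP$-q.s.\ globally, and it is what \asref{a2}(i) is actually used for. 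Your proposal correctly identifies that $0\in\cH_t$ is needed, but applies the redefinition on the wrong set.
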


The lemma below is a measurable version of \thref{t1}. It is parallel to \cite[Lemma 4.8]{Nutz2}. We provide its proof in the appendix.
\begin{lemma}\label{l15}
Let $t\in\{0,\dotso,T-1\}$, let $P(\cdot):\ \Omega_t\mapsto\mathfrak{P}(\Omega)$ be Borel, and let $\mathcal{Q}_t:\ \Omega_t\Mapsto\mathfrak{P}(\Omega)$,
\begin{equation}\notag
\mathcal{Q}_t(\omega):=\{Q\in\mathfrak{P}(\Omega):\ Q \lll\mathcal{P}_t(\omega),\ E_Q|\Delta S_t(\omega,\cdot)|<\infty,\ E_Q[y\Delta S_t(\omega,\cdot)]\leq 0,\ \forall y\in\mathcal{H}_t(\omega)\}.
\end{equation}
If \asref{a2}(ii)(iii) holds, then $\mathcal{Q}_t$ has an analytic graph and there exist u.m. mappings $Q(\cdot),\hat P(\cdot):\ \Omega_t\rightarrow\mathfrak{P}(\Omega)$ such that 
\begin{eqnarray}
\notag&&P(\omega)\ll Q(\omega)\ll\hat P(\omega)\quad \text{for all }\omega\in\Omega_t,\\
\notag&& \hat P(\omega)\in\mathcal{P}_t(\omega)\quad\text{if }P(\omega)\in\mathcal{P}_t(\omega),\\
\notag&& Q(\omega)\in\mathcal{Q}_t(\omega)\quad \text{if NA}(\mathcal{P}_t(\omega))\text{ holds and }P(\omega)\in\mathcal{P}_t(\omega).
\end{eqnarray}
\end{lemma}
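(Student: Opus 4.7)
The plan is to mimic Lemma 4.8 of \cite{Nutz2}, using our one-period \thref{t1} as the pointwise nonemptiness ingredient and the Jankov-von Neumann selection theorem to glue things together measurably. The proof splits into three stages: (1) analyticity of Gr$(\cQ_t)$; (2) measurable selection of a pair $(\hat P,Q)$ on the ``good'' set where NA holds and $P(\omega)\in\cP_t(\omega)$; (3) trivial extension off that set.

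For Stage~1 I would write
\[
\text{Gr}(\cQ_t)=\Psi_{\cH_t}\cap\{(\omega,Q)\in\Omega_t\times\PO:\ Q\lll\cP_t(\omega)\}.
\]
The first factor is analytic by \asref{a2}(iii). For the second, I would invoke the classical fact that the relation $\{(Q,P)\in\PO^2:\ Q\ll P\}$ is Borel (a countable-test-function characterisation; cf.\ the analogous step in \cite{Nutz2}), so that $\{(\omega,Q):\ Q\lll\cP_t(\omega)\}$ is the projection onto the first two coordinates of the analytic set $\{(\omega,Q,P):\ (\omega,P)\in\text{Gr}(\cP_t),\ Q\ll P\}$; projections of analytic sets are analytic, so Gr$(\cQ_t)$ is analytic.

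For Stage~2, let $G:=\{\omega\in\Omega_t:\ P(\omega)\in\cP_t(\omega)\ \text{and}\ \text{NA}(\cP_t(\omega))\text{ holds}\}$, which is u.m.\ by \leref{l6} combined with analyticity of Gr$(\cP_t)$ and Borel measurability of $P(\cdot)$. On $G$ I would define
\[
\Phi(\omega):=\{(P',R)\in\PO^2:\ P'\in\cP_t(\omega),\ R\in\cQ_t(\omega),\ P(\omega)\ll R\ll P'\}.
\]
Pointwise nonemptiness of $\Phi$ on $G$ is an immediate consequence of the proof of \thref{t1}: it produces $P'=(P(\omega)+P'')/2\in\cP_t(\omega)$ with $P''$ supplied by \leref{l12}, together with some $Q\sim P'$ lying in $\cQ_t(\omega)$ and dominating $P(\omega)$; the pair $(P',Q)$ then belongs to $\Phi(\omega)$. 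The graph of $\Phi$ is the intersection of the pullbacks of Gr$(\cP_t)$ and Gr$(\cQ_t)$ to $\Omega_t\times\PO^2$ with the Borel set encoding the $\ll$-conditions (Borel because $\ll$ is Borel in $\PO^2$ and $P(\cdot)$ is Borel), hence analytic. Jankov-von Neumann (\cite[Proposition 7.49]{Shreve}) then yields a u.m.\ selector $(\hat P(\cdot),Q(\cdot))$ of $\Phi$ on $G$. Off $G$, set $\hat P(\omega)=Q(\omega)=P(\omega)$.

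Verification is routine: on $G$ the three bulleted conclusions follow directly from the definition of $\Phi$; off $G$, $P(\omega)\ll Q(\omega)\ll\hat P(\omega)$ is trivial, if $P(\omega)\in\cP_t(\omega)$ (with NA failing) then $\hat P(\omega)=P(\omega)\in\cP_t(\omega)$, and the $\cQ_t$-conclusion is vacuous. The main obstacle I expect is Stage~1, specifically the bookkeeping to confirm Borel measurability of the $\ll$ relation in $\PO^2$ and then the analytic-class stability under the intersections and projections appearing in $\Phi$. A secondary point requiring care is reading nonemptiness of $\Phi$ on $G$ out of the proof of \thref{t1} rather than just its statement, so that both $\hat P$ and $Q$ are delivered simultaneously with the required absolute-continuity sandwich.
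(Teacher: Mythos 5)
Your proof is correct and follows essentially the same route as the paper's: express Gr$(\cQ_t)$ as the intersection of $\Psi_{\cH_t}$ with a projection encoding $Q\lll\cP_t(\omega)$, set up an analytic-graph correspondence packaging the pair $(\hat P,Q)$ with the $\ll$-sandwich, invoke \thref{t1} for pointwise nonemptiness, and apply Jankov--von Neumann. The only cosmetic difference is that you restrict attention to the good set $G$ before selecting, whereas the paper selects on $\{\Xi\neq\emptyset\}$ and extends by $Q=\hat P=P$ off it, but both handle the boundary cases correctly.
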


\begin{proof}[\textbf{Proof of \thref{t2}}]
Using Lemmas \ref{l6} and \ref{l15}, we can perform the same glueing argument Bouchard and Nutz use in the proof of \cite[Theorem 4.5]{Nutz2}, and thus we omit it here.
\end{proof}

\subsection{Sufficient conditions for \asref{a2}(iii)}\label{s1}
By \cite[Proposition 7.47]{Shreve}, the map $(\omega,Q)\mapsto\sup_{y\in\mathcal{H}_t(\omega)}E_Q[y\Delta S_t(\omega,\cdot)]$ is u.s.a., which does not necessarily imply the analyticity of $\Psi_{\mathcal{H}_t}$ as the complement of an analytic set may fail to be analytic. Therefore we provide some sufficient conditions for \asref{a2}(iii) below.

\begin{definition}
We call $\mathfrak{H}_t:\ \Omega_t\twoheadrightarrow\mathbb{R}^d$ a stretch of $\mathcal{H}_t$, if for any $\omega\in\Omega_t$, $\mathfrak{C}_{\mathfrak{H}_t(\omega)}=\mathfrak{C}_{\mathcal{H}_t(\omega)}$.
\end{definition}
It is easy to see that for any stretch $\mathfrak{H}_t$ of $\mathcal{H}_t$,
\begin{equation}\notag
\Psi_{\mathcal{H}_t}=\Psi_{\mathfrak{H}_t}=\{(\omega,Q)\in\Omega_t\times\mathfrak{P}(\Omega):\ E_Q|\Delta S_t(\omega,\cdot)|<\infty,\ \sup_{y\in\mathfrak{H}_t(\omega)}yE_Q[\Delta S_t(\omega,\cdot)]\leq 0\}.
\end{equation}
Therefore, in order to show $\Psi_{\cH_t}$ is analytic, it suffices to show that there exists a stretch $\mathfrak{H}_t$ of $\cH_t$, such that the map $\varphi_{\mathfrak{H}_t}:\ \Omega_t\times\mathfrak{P}(\Omega)\mapsto\R^*$
\begin{equation}\label{eq:strngp}
\varphi_{\mathfrak{H}_t}(\omega,Q)=\sup_{y\in\mathfrak{H}_t(\omega)}yE_Q[\Delta S_t(\omega,\cdot)]
\end{equation}
is l.s.a. on $\mathcal{J}:=\{(\omega,Q)\in\Omega_t\times\mathfrak{P}(\Omega):\ E_Q|\Delta S_t(\omega,\cdot)|<\infty\}.$
\begin{proposition}\label{p1}
If there exists a measurable (w.r.t. $\mathcal{B}(\mathbb{R}^d)$) stretch $\mathfrak{H}_t$ of $\mathcal{H}_t$ with nonempty compact values, then $\varphi_{\mathfrak{H}_t}$ is Borel measurable, and thus $\Psi_{\mathcal{H}_t}$ is Borel measurable. 
\end{proposition}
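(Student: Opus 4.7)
The plan is to combine two ingredients: a joint Borel-measurability fact for integrals against probability kernels, and a Castaing-type representation for the closed-valued correspondence $\mathfrak{H}_t$. Since $\Psi_{\mathcal{H}_t}=\Psi_{\mathfrak{H}_t}$, everything reduces to showing that the set $\mathcal{J}$ is Borel and that the map
$$\mu:\mathcal{J}\ni(\omega,Q)\mapsto E_Q[\Delta S_t(\omega,\cdot)]\in\R^d$$
is Borel measurable; the supremum over $\mathfrak{H}_t(\omega)$ of the linear functional $y\mapsto y\cdot\mu(\omega,Q)$ will then inherit Borel measurability via a countable supremum of Borel selectors.

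For the first ingredient, I would proceed by a monotone class argument. For $\phi=1_A\,1_B$ with $A\in\mathcal{B}(\Omega_t)$ and $B\in\mathcal{B}(\Omega)$, the map $(\omega,Q)\mapsto E_Q[\phi(\omega,\cdot)]=1_A(\omega)Q(B)$ is Borel as a product of Borel functions (recall that $\mathfrak{P}(\Omega)$ carries the weak topology, under which $Q\mapsto Q(B)$ is Borel). By linearity and the monotone class theorem, the same holds for every bounded Borel $\phi$ on $\Omega_t\times\Omega$. Applying this to bounded truncations of $|\Delta S_t|$ and of each component of $\Delta S_t$, together with monotone/dominated convergence, shows that $(\omega,Q)\mapsto E_Q|\Delta S_t(\omega,\cdot)|$ is Borel (so $\mathcal{J}$ is Borel) and that $\mu$ is Borel on $\mathcal{J}$.

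For the second ingredient, since $\mathfrak{H}_t:\Omega_t\twoheadrightarrow\R^d$ is a Borel-measurable, nonempty, closed-valued correspondence into the Polish space $\R^d$, a Castaing representation (see \cite[Chapter 18]{IDA}) furnishes Borel selectors $h_n:\Omega_t\to\R^d$ with $\mathfrak{H}_t(\omega)=\overline{\{h_n(\omega):n\in\N\}}$ for every $\omega\in\Omega_t$. Continuity of $y\mapsto y\cdot\mu(\omega,Q)$ then gives
$$\varphi_{\mathfrak{H}_t}(\omega,Q)=\sup_{n\in\N}h_n(\omega)\cdot\mu(\omega,Q)\quad\text{on }\mathcal{J},$$
a countable supremum of Borel functions, hence Borel. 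Therefore $\Psi_{\mathcal{H}_t}=\mathcal{J}\cap\varphi_{\mathfrak{H}_t}^{-1}((-\infty,0])$ is Borel, as claimed. The step I expect to need the most care is the first ingredient: weak-topology continuity of $Q\mapsto E_Q[\phi]$ only holds for bounded continuous $\phi$, which $\Delta S_t$ is not, so the monotone class reduction plus the truncation on $\mathcal{J}$ are what make the argument go through. Compactness of the values of $\mathfrak{H}_t$ is used only to guarantee that $\varphi_{\mathfrak{H}_t}$ is real-valued and to justify that the supremum is attained, and plays no role in the measurability argument itself.
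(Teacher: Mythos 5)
Your proof is correct, but it follows a genuinely different route from the paper. The paper's proof is a single line: it cites \cite[Theorem 18.19]{IDA}, the Measurable Maximum Theorem, which directly gives Borel measurability of $\varphi_{\mathfrak{H}_t}$ when the correspondence is weakly measurable with nonempty compact values and the criterion function is Carath\'eodory. You instead unfold the two ingredients that theorem needs: the joint Borel measurability of $(\omega,Q)\mapsto E_Q[\Delta S_t(\omega,\cdot)]$ on $\mathcal{J}$ (via a monotone class argument, together with Borel measurability of $Q\mapsto Q(B)$ in the weak topology and a truncation step to handle unboundedness), and a Castaing representation of $\mathfrak{H}_t$ to reduce the supremum to a countable one. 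In fact, your Carath\'eodory/monotone-class step is implicitly required to invoke Theorem 18.19 anyway, so you have made the paper's argument fully explicit. One bonus of your route is that it never actually needs compactness of the values of $\mathfrak{H}_t$ for the measurability itself — closed values plus (weak) measurability suffice for a Castaing representation into the Polish space $\R^d$, and a countable supremum of Borel functions with values in $\R^*$ is Borel regardless; compactness is only what makes $\varphi_{\mathfrak{H}_t}$ finite-valued and the max attained, which is incidental to the conclusion of the proposition. In that sense your argument is slightly more general, and it is also closer in spirit to the countable-dense-subset device the paper uses in Proposition \ref{p2}. The only point I would flag is that, for completeness, one should record that ``measurable'' for a closed-valued correspondence into a metrizable space implies ``weakly measurable'' (this is \cite[Lemma 18.2]{IDA}), which is the hypothesis under which Castaing's theorem and the countable dense selectors $(h_n)_n$ are obtained; you invoke the Castaing representation without naming this reduction.
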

\begin{proof}
The conclusion follows directly from \cite[Theorem 18.19]{IDA}.
\end{proof}
\begin{proposition}\label{p2}
If there exists a stretch $\mathfrak{H}_t$ of $\mathcal{H}_t$ satisfying
\begin{itemize}
\item[(i)] graph$(\mathfrak{H}_t)$ is Borel measurable,
\item [(ii)] there exists a countable set $(y_n)_n\subset\mathbb{R}^d$, such that for any $\omega\in\Omega_t$ and $y\in\mathfrak{H}_t(\omega)$, there exist $(y_{n_k})_k\subset(y_n)_n\cap\mathfrak{H}_t$ converging to $y$,
\end{itemize}
then $\varphi_{\mathfrak{H}_t}$ is Borel measurable, and thus $\Psi_{\mathcal{H}_t}$ is Borel measurable.
\end{proposition}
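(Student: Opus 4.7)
The plan is to rewrite $\varphi_{\mathfrak{H}_t}$ on $\mathcal{J}$ as a countable supremum of Borel measurable functions, which then yields Borel measurability of $\varphi_{\mathfrak{H}_t}$ on $\mathcal{J}$ and hence of $\Psi_{\mathcal{H}_t}=\Psi_{\mathfrak{H}_t}=\mathcal{J}\cap\{\varphi_{\mathfrak{H}_t}\le 0\}$. The first step is the density reduction enabled by hypothesis~(ii). Fix $(\omega,Q)\in\mathcal{J}$ and set $v(\omega,Q):=E_Q[\Delta S_t(\omega,\cdot)]\in\R^d$; the map $y\mapsto y\cdot v(\omega,Q)$ is continuous on $\R^d$. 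For every $y\in\mathfrak{H}_t(\omega)$, by~(ii) there is a subsequence $(y_{n_k})_k$ of $(y_n)_n$ with $y_{n_k}\in\mathfrak{H}_t(\omega)$ and $y_{n_k}\to y$, so $y_{n_k}\cdot v(\omega,Q)\to y\cdot v(\omega,Q)$. Consequently, on $\mathcal{J}$,
\[
\varphi_{\mathfrak{H}_t}(\omega,Q)=\sup\bigl\{\, y_n\cdot v(\omega,Q)\,:\ n\in\N,\ y_n\in\mathfrak{H}_t(\omega)\,\bigr\}.
\]

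Next I would verify the joint Borel measurability of the integral map $(\omega,Q)\mapsto E_Q[\Delta S_t(\omega,\cdot)]$ on $\mathcal{J}$. Since $\Delta S_t:\Omega_{t+1}\to\R^d$ is Borel, a standard monotone class argument---first for product indicators $1_{A\times B}$ where Fubini directly applies, then extended to bounded Borel integrands---shows that $(\omega,Q)\mapsto E_Q[f(\omega,\cdot)]$ is Borel for every bounded Borel $f$ on $\Omega_t\times\Omega$. Applying this componentwise to $(\Delta S_t)^\pm$ and taking differences (which are finite on $\mathcal{J}$ by definition of $\mathcal{J}$) gives the claim; the same argument applied to $|\Delta S_t|$ shows that $\mathcal{J}$ itself is a Borel subset of $\Omega_t\times\mathfrak{P}(\Omega)$.

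For the assembly step, for each $n\in\N$ set $A_n:=\{\omega\in\Omega_t:\ y_n\in\mathfrak{H}_t(\omega)\}$; this is the $y_n$-section of $\mathrm{graph}(\mathfrak{H}_t)$, i.e.\ the preimage of $\mathrm{graph}(\mathfrak{H}_t)$ under the Borel map $\omega\mapsto(\omega,y_n)$, hence Borel by hypothesis~(i). Define $f_n:\mathcal{J}\to\R^*$ by
\[
f_n(\omega,Q):=\bigl(y_n\cdot E_Q[\Delta S_t(\omega,\cdot)]\bigr)\,1_{A_n}(\omega)+(-\infty)\,1_{A_n^c}(\omega).
\]
Each $f_n$ is Borel on $\mathcal{J}$ by the previous step, and the density reduction gives $\varphi_{\mathfrak{H}_t}=\sup_{n\in\N} f_n$ on $\mathcal{J}$. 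Therefore $\varphi_{\mathfrak{H}_t}$ is Borel on $\mathcal{J}$, and $\Psi_{\mathcal{H}_t}=\mathcal{J}\cap\{\varphi_{\mathfrak{H}_t}\le 0\}$ is Borel as claimed.

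The main obstacle is conceptual and sits in the first step: one must pass from an uncountable pointwise supremum (which can fail to be measurable) to a countable one. Hypothesis~(ii) is precisely designed to do this, replacing the compactness assumption used in \prref{p1} (which invokes \cite[Theorem 18.19]{IDA}) by a more flexible separability-type condition. The only genuine technicality is the unboundedness of $\Delta S_t$, which is handled by splitting into positive and negative parts and restricting to $\mathcal{J}$, on which both integrals are finite.
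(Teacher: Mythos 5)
Your proof is correct and follows essentially the same route as the paper's: replace the uncountable supremum over $\mathfrak{H}_t(\omega)$ by a countable supremum using hypothesis~(ii), show that the kernel-integral map $(\omega,Q)\mapsto E_Q[\Delta S_t(\omega,\cdot)]$ is Borel on $\mathcal{J}$ by a monotone class argument, and use hypothesis~(i) to get Borel measurability of each term (the paper packages this as a single map $\phi(y,\omega,Q)$ on $\R^d\times\mathcal{J}$ and sets $\varphi=\sup_n\phi(y_n,\cdot,\cdot)$, whereas you fix $y_n$ and argue via the Borel sections $A_n$ of $\mathrm{graph}(\mathfrak{H}_t)$, which is an equivalent bookkeeping). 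Your additional observation that $\mathcal{J}$ itself is Borel is correct and, strictly speaking, needed to pass from Borel measurability of $\varphi_{\mathfrak{H}_t}$ on $\mathcal{J}$ to Borel measurability of $\Psi_{\mathcal{H}_t}$; the paper leaves it implicit.
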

\begin{proof}
Define function $\phi:\ \R^d\times\mathcal{J}\mapsto\mathbb{R}^*$,
$$
\phi(y,\omega,Q) = \left\{ \begin{array}{rl}
 yE_Q[\Delta S_t(\omega,\cdot)] &\mbox{ if $y\in\mathfrak{H}_t(\omega)$,} \\
  -\infty\ \ \ \ \ \ \ \ \ &\mbox{ otherwise.}
       \end{array} \right.
$$
It can be shown by a monotone class argument that $\phi$ is Borel measurable. So the function $\varphi:\ \mathcal{J}\mapsto\mathbb{R}$
$$\varphi(\omega,Q)=\sup_n\phi(y_n,\omega,Q)$$
is Borel measurable. It remains to show that $\varphi=\varphi_{\mathfrak{H}_t}$.
It is easy to see that $\varphi\geq\varphi_{\mathfrak{H}_t}$. Conversely, take $(\omega,Q)\in\mathcal{J}$. Then $\phi(y_n,\omega,Q)=y_nE_Q[\Delta S(\omega,\cdot)]\leq\varphi_{\mathfrak{H}_t}(\omega,Q)$ if $y_n\in\mathfrak{H}_t(\omega)$, and $\phi(y_n,\omega,Q)=-\infty<\varphi_{\mathfrak{H}_t}(\omega,Q)$ if $y_n\notin\mathfrak{H}_t(\omega)$; i.e., $\varphi(\omega,Q)=\sup_n\phi(y_n,\omega,Q)\leq\varphi_{\mathfrak{H}_t}(\omega,Q)$.
\end{proof}

\begin{example}\label{ex:straex}
Let $\underline a_t^i,\ \overline a_t^i:\ \Omega_t\mapsto\R$ be Borel measurable, with $\underline a_t^i<\overline a_t^i,\ i=1,\dotso,d$. Let
$$\cH_t(\omega)=\prod_{i=1}^d[\underline a_t^i(\omega),\overline a_t^i(\omega)],\quad\omega\in\Omega_t.$$
Then both Propositions~\ref{p1} and \ref{p2} hold with $\mathfrak{H}_t=\cH_t$ and $(y_n)_n=\mathbb{Q}^d$.
\end{example}

\begin{example}\label{ex3.2}
Let $d=1$ and $\cH_t$ be such that for any $\omega\in\Omega_t$, $\cH_t(\omega)\subset (0,\infty)$. We assume that graph$(\cH_t)$ is analytic, but not Borel. Then $\cH_t$ itself does not satisfy the assumptions in Proposition \ref{p1} or \ref{p2}. Now let $\mathfrak{H}_t(\omega)=[1,2],\ \omega\in\Omega_t$. Then $\mathfrak{H}_t$ is a stretch of $\cH_t$, and $\mathfrak{H}_t$ satisfies the assumptions in Propositions \ref{p1} and \ref{p2} with $(y_n)_n=\mathbb{Q}$. 
\end{example}

\section{Super-hedging in one period}
\subsection{The set-up and the main result}
We use the set-up in Section 2. Let $f$ be a u.m. function. Define the super-hedging price
$$\pi^{\mathcal{P}}(f):=\inf\{x:\ \exists H\in\mathcal{H}, \text{ s.t. }x+H\cdot S\geq f,\ \mathcal{P}-q.s.\}.$$
We also denote $\pi^P(f)=\pi^{\{P\}}(f)$. We further assume:
\begin{assumption}\label{a3}
$\mathcal{H}(\mathcal{P})$ is convex and closed.
\end{assumption}
\begin{remark}
It is easy to see that if $\mathcal{H}(\mathcal{P})$ is convex, then $\mathcal{C}_\mathcal{H}(\mathcal{P})$ is convex. 
\end{remark}
Define
$$\mathfrak{Q}:=\{Q\in\mathfrak{P}(\Omega):\ Q\lll\mathcal{P},\ E_Q|\Delta S|<\infty,\ A^Q:=\sup_{H\in\mathcal{H}}E_Q[H\Delta S]<\infty\}.$$
Below is the main result of this section.
\begin{theorem}\label{t3}
Let Assumptions \ref{a1}(ii) \& \ref{a3} and NA$(\mathcal{P})$ hold. Then
\begin{equation}\label{e2}
\pi^{\mathcal{P}}(f)=\sup_{Q\in\mathfrak{Q}}(E_Q[f]-A^Q).
\end{equation}
Besides, $\pi^\cP(f)>-\infty$ and there exists $H\in\mathcal{H}$ such that $\pi^\cP(f)+H\Delta S\geq f\ \mathcal{P}-q.s.$.
\end{theorem}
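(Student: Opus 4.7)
The plan is three-fold: weak duality, existence of an optimal strategy together with $\pi^\cP(f)>-\infty$, and strong duality.

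\emph{Weak duality.} For any admissible $(x,H)$ with $x+H\Delta S\ge f$ $\cP$-q.s.\ and any $Q\in\fQ$, since $Q\lll\cP$ and $E_Q|\Delta S|<\infty$, taking $Q$-expectation yields $x\ge E_Q[f]-A^Q$; hence $\pi^\cP(f)\ge\sup_{Q\in\fQ}(E_Q[f]-A^Q)$.

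\emph{Existence and finiteness.} For a minimizing sequence $(x_n,H_n)$ with $x_n\downarrow\pi^\cP(f)\in\R^*$, I first project each $H_n$ onto $N^\perp(\cP)$ (which preserves $H_n\Delta S$ $\cP$-q.s.) so that $H_n\in\cH(\cP)$. Following Step~1 in the proof of \leref{l1}, if $\|H_n\|$ were unbounded along a subsequence, the normalized vectors $\hat H_n:=H_n/\|H_n\|\in\mathcal{C}_\cH(\cP)$ would have a cluster point $H^*\in\mathcal{C}_\cH(\cP)$ of unit norm by the closedness in \asref{a1}(ii); dividing the super-hedging inequality by $\|H_n\|$ (handling separately the cases $\pi^\cP(f)\in\R$ and $\pi^\cP(f)=-\infty$, and using that $f$ is $\cP$-q.s.\ real-valued) yields $H^*\Delta S\ge 0$ $\cP$-q.s., and NA$(\cP)$ combined with $H^*\in N^\perp(\cP)$ forces $H^*\in N(\cP)\cap N^\perp(\cP)=\{0\}$, a contradiction. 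This same argument also rules out $\pi^\cP(f)=-\infty$. With $\|H_n\|$ bounded, \asref{a3} (closedness of $\cH(\cP)$) yields a limit $H\in\cH(\cP)$ with $\pi^\cP(f)+H\Delta S\ge f$ $\cP$-q.s.; the projection definition of $\cH(\cP)$ then lifts $H$ to an element of $\cH$ with the same value of $H\Delta S$ $\cP$-q.s.

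\emph{Strong duality (main obstacle).} Suppose for contradiction $\pi^\cP(f)>c>c^*:=\sup_{Q\in\fQ}(E_Q[f]-A^Q)$. By \leref{l12} pick $R\in\cP$ with $N^\perp(R)=N^\perp(\cP)$ and NA$(R)$, and after an equivalent change of reference assume $E_R|\Delta S|<\infty$. Step~1 of \leref{l1} applied to $R$ gives closedness of $K-L^0_+(R)$ in $L^0(R)$, where $K:=\{H\Delta S:H\in\mathcal{C}_\cH(\cP)\}$; hence $C':=(K-L^0_+(R))\cap L^1(R)$ is a closed convex subset of $L^1(R)$ containing $-L^\infty_+(R)$. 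The crux is transferring $\pi^\cP(f)>c$ into $f-c\notin C'$, since $R$-a.s.\ is strictly weaker than $\cP$-q.s.; to do so I would upgrade $R$ to some $R'\in\cP$ by the finite-covering averaging construction of \leref{l12} applied to a bounded family of candidate pairs $(c_n,H_n)$ in a neighborhood of the optimizer from the previous step, producing $\pi^{R'}(f)=\pi^\cP(f)>c$. A Hahn-Banach separation of $f-c$ from $C'$ in $L^1(R')$ then yields $Z\in L_+^\infty(R')$ (the non-negativity coming from $-L^\infty_+\subset C'$) with $E_{R'}[Z(f-c)]>\sup_{g\in C'}E_{R'}[Zg]\ge\sup_{H\in\cH}E_{R'}[ZH\Delta S]$; setting $dQ/dR':=Z/E_{R'}[Z]$ gives $Q\in\fQ$ (using $Q\ll R'\in\cP$ and the integrability $E_{R'}|\Delta S|<\infty$) with $E_Q[f]-A^Q>c>c^*$, the desired contradiction. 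The principal difficulty, and main obstacle of the proof, is the upgrading step: because $\cH$ is uncountable and lacks cone structure, transferring the $\cP$-q.s.\ infeasibility to $R'$-a.s.\ infeasibility requires combining the finite covering of \leref{l12} with a continuity/density argument in the parameter $(c,H)$, and truncation of $f$ may be needed to ensure integrability before passing to the Hahn-Banach step.
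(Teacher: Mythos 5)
Your weak-duality step and your existence/finiteness step are sound and follow essentially the route the paper takes for those parts (the paper packages the boundedness/closedness argument as \leref{l3} and uses it the same way). The genuine gap is in the strong-duality step, and you have correctly diagnosed where it is, but the sketch you give does not close it.

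The ``upgrading step'' you flag is precisely the content of \leref{l2} in the paper, which proves $\pi^\cP(f)=\sup_{P\in\cP}\pi^P(f)$. Your sketch says to apply a finite-covering/averaging to ``a bounded family of candidate pairs $(c_n,H_n)$ in a neighborhood of the optimizer,'' but this misidentifies the set that must be covered. What has to be covered is the entire set $A_\alpha=\{H\in\cH(\cP):\ \alpha+H\Delta S\geq f\ P''\text{-a.s.}\}$ of $\alpha$-superhedgers under $P''$ (where $\alpha<\pi^\cP(f)$ and $P''$ is the measure from \leref{l12}), not a neighborhood of a single optimizer. Moreover, the compactness of $A_\alpha$ is not a priori obvious and is in fact the crux of \leref{l2}: it requires the normalization/limit argument driven by NA$(P'')$ and the closedness of $\cC_\cH(\cP)$ (to rule out unbounded sequences) together with the closedness of $\cH(\cP)$ from \asref{a3} (to capture limit points). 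Once compactness is established, the open-cover/finite-mixture step is routine, as in \leref{l12}. Without this compactness argument, there is no reason a finite cover should exist, and the proof does not go through.

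There is also a second, milder deviation worth noting. The paper does not redo the single-measure Kreps--Yan/Hahn--Banach separation from scratch: after \leref{l2} it invokes the FTAP (\thref{t1}) to replace $\sup_{P\in\cP}$ by $\sup_{Q\in\cQ}$, and then quotes the classical constrained superhedging duality of F\"ollmer--Schied (Proposition 9.23) to write $\pi^Q(f)=\sup_{Q'\sim Q,\,Q'\in\fQ}\bigl(E_{Q'}[f]-A^{Q'}\bigr)$. Your direct Hahn--Banach route can work, but it carries extra bookkeeping: you must separate $f-c$ from the (non-cone) set $C'$, argue that the separating density $Z$ is nonnegative because $-L^\infty_+\subset C'$, handle possible non-integrability of $f$ under $R'$ (you mention truncation but do not carry it out), and verify that the resulting measure has $A^Q<\infty$ (which follows because $E_Q|\Delta S|<\infty$, so $H\Delta S\in L^1(Q)$ for every fixed $H$, but this still needs to be said). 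The paper's factoring through \leref{l2} and the classical result is cleaner and avoids repeating that bookkeeping. To make your proposal a complete proof, you should state and prove the compactness of $A_\alpha$ as a lemma, carry out the finite cover on $A_\alpha$ itself, and either invoke the classical single-measure duality or fill in the integrability details of your Hahn--Banach step.
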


\subsection{Proof of \thref{t3}}
We first provide two lemmas. 
\begin{lemma}\label{l2}
Let NA$(\mathcal{P})$ hold. If $\mathcal{H}(\mathcal{P})$ and $\mathcal{C}_\mathcal{H}(\mathcal{P})$ are closed, then
$$\pi^{\mathcal{P}}(f)=\sup_{P\in\mathcal{P}}\pi^P(f).$$
\end{lemma}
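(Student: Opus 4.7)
The trivial direction $\pi^{\mathcal{P}}(f)\geq\sup_{P\in\mathcal{P}}\pi^P(f)$ follows because any $(x,H)$ super-hedging $\mathcal{P}$-q.s.\ super-hedges under each $P\in\mathcal{P}$. For the converse I may assume $x^*:=\sup_{P\in\mathcal{P}}\pi^P(f)<\infty$ (otherwise there is nothing to prove), and it suffices to show that for every $x_0>x^*$ there is a single $H\in\mathcal{H}$ with $x_0+H\Delta S\geq f$ $\mathcal{P}$-q.s.; letting $x_0\downarrow x^*$ then gives $\pi^{\mathcal{P}}(f)\leq x^*$. My plan is to construct $H$ as a common point of the family
\[
C_P:=\bigl\{\tilde H\in\mathcal{H}(\mathcal{P}):\ x_0+\tilde H\Delta S\geq f\ P\text{-a.s.}\bigr\},\qquad P\in\mathcal{P},
\]
and then lift any $\tilde H\in\bigcap_{P\in\mathcal{P}}C_P$ back to an $H\in\mathcal{H}$ via a preimage under the projection $\mathcal{H}\twoheadrightarrow\mathcal{H}(\mathcal{P})$; since $H-\tilde H\in N(\mathcal{P})$, one has $H\Delta S=\tilde H\Delta S$ $\mathcal{P}$-q.s., so the super-hedging inequality is preserved.

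Non-emptiness of $\bigcap_{P\in\mathcal{P}}C_P$ I would derive from the finite intersection property plus compactness of one member. The FIP uses convexity of $\mathcal{P}$: given $P_1,\dots,P_n\in\mathcal{P}$, the mean $\bar P:=(P_1+\cdots+P_n)/n$ lies in $\mathcal{P}$ and satisfies $\pi^{\bar P}(f)\leq x^*<x_0$, so some $H\in\mathcal{H}$ super-hedges $f$ at price $x_0$ under $\bar P$ and hence under each $P_i$; its projection onto $N^\perp(\mathcal{P})$ then sits in $\bigcap_{i=1}^nC_{P_i}$. Closedness of each $C_P$ is immediate from closedness of $\mathcal{H}(\mathcal{P})$ and a routine $\omega$-wise limit along the union of the $P$-null bad sets $\{x_0+\tilde H_n\Delta S<f\}$.

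Compactness is the heart of the argument. I would apply Lemma~\ref{l12} to pick $P^*\in\mathcal{P}$ with $N^\perp(P^*)=N^\perp(\mathcal{P})$ and NA$(P^*)$. Boundedness of $C_{P^*}$ then goes by contradiction: if $(\tilde H_n)\subset C_{P^*}$ has $\|\tilde H_n\|\to\infty$, the normalized vectors $u_n:=\tilde H_n/\|\tilde H_n\|$ lie in $\mathcal{C}_\mathcal{H}(\mathcal{P})=\mathcal{C}_\mathcal{H}(P^*)$ (the equality using $N^\perp(P^*)=N^\perp(\mathcal{P})$), which is closed by \asref{a1}(ii); a subsequential limit is a unit vector $H'\in\mathcal{C}_\mathcal{H}(P^*)\subseteq N^\perp(P^*)$, and dividing the defining inequality $x_0+\tilde H_n\Delta S\geq f$ by $\|\tilde H_n\|$ and passing to the limit yields $H'\Delta S\geq 0$ $P^*$-a.s.\ (the right-hand side $(f-x_0)/\|\tilde H_n\|$ tends to $0$ $P^*$-a.s., since $\pi^{P^*}(f)\leq x^*<\infty$ forces $f$ to be $P^*$-a.s.\ finite). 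NA$(P^*)$ then forces $H'\in N(P^*)\cap N^\perp(P^*)=\{0\}$, contradicting $\|H'\|=1$. Hence $C_{P^*}$ is a closed, bounded subset of the finite-dimensional space $N^\perp(\mathcal{P})$, and therefore compact.

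With FIP and the compact member $C_{P^*}$ in hand, $\bigcap_{P\in\mathcal{P}}C_P\neq\emptyset$, and the lift completes the proof. I expect the compactness step to be the main obstacle: it is the only place where the single-measure no-arbitrage supplied by Lemma~\ref{l12} and the closedness of the generated cone $\mathcal{C}_\mathcal{H}(\mathcal{P})$ genuinely intervene, and it is essential for controlling the otherwise unbounded strategy space and passing from ``super-hedging under each $P$'' to ``super-hedging quasi-surely''.
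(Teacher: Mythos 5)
Your proof is correct, and it takes a recognizably different route from the paper's, though the two share the same key ingredients. The paper argues by contradiction: it fixes $\alpha$ strictly between $\sup_{P}\pi^P(f)$ and $\pi^{\mathcal{P}}(f)$, shows (via Lemma~\ref{l12} and a normalization argument identical in substance to your boundedness step) that the set $A_\alpha=\{H\in\mathcal{H}(\mathcal{P}):\alpha+H\Delta S\geq f\ P''\text{-a.s.}\}$ is compact, covers $A_\alpha$ by finitely many balls on each of which a ``detecting'' measure $P_H$ witnesses the failure of super-hedging at level $\alpha$, and mixes these measures with $P''$ (using convexity of $\mathcal{P}$) into a single $P'\in\mathcal{P}$ for which $\pi^{P'}(f)\geq\alpha$ — a contradiction. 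You instead argue directly, producing a single super-hedging strategy rather than a single refuting measure: the family $\{C_P\}_{P\in\mathcal{P}}$ has the finite intersection property (verified via convexity of $\mathcal{P}$, by passing to the barycentre $\bar P$ and projecting a $\bar P$-a.s.\ super-hedge onto $N^\perp(\mathcal{P})$), and $C_{P^*}$ is compact (by the same Lemma~\ref{l12} plus normalization argument), so $\bigcap_P C_P\neq\emptyset$ and any common point lifts to an admissible $H$. The two arguments are essentially dual — a finite-cover-plus-mixed-measure construction versus a finite-intersection-plus-compactness construction — and both exploit exactly the same compactness mechanism; your FIP formulation is arguably a bit cleaner, while the paper's form more explicitly exhibits the measure $P'$ whose single-measure super-hedging price attains (up to $\varepsilon$) the quasi-sure price. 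One small remark: the claim that $(f-x_0)/\|\tilde H_n\|\to 0$ $P^*$-a.s.\ implicitly uses that $f>-\infty$ $P^*$-a.s.; the paper's proof passes over the same point silently, and it is unproblematic since $f$ is real-valued in this section.
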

\begin{proof}
It is easy to see that $\pi^{\mathcal{P}}(f)\geq\sup_{P\in\mathcal{P}}\pi^P(f)$. We shall prove the reverse inequality. If $\pi^{\mathcal{P}}(f)>\sup_{P\in\mathcal{P}}\pi^P(f)$, then there exists $\eps>0$ such that
\begin{equation}\label{e29}
\alpha:=\pi^{\mathcal{P}}(f)\wedge\frac{1}{\eps}-\eps>\sup_{P\in\mathcal{P}}\pi^P(f).
\end{equation}
By \leref{l12} there exists $P''\in\mathcal{P}$, such that $N^\perp(P'')=N^\perp(\mathcal{P})$ and NA$(P'')$ holds. 

Moreover, we have that the set 
$$A_\alpha:=\{H\in\mathcal{H}(\mathcal{P}):\ \alpha+H\Delta S\geq f,\ P''-\text{a.s.}\}$$
is compact. In order to prove this claim take $(H_n)_n\subset A_\alpha$. If $(H_n)_n$ is not bounded, w.l.o.g. we assume $0<||H_n||\rightarrow\infty$; then
\begin{equation}\label{e18}
\frac{\alpha}{||H_n||}+\frac{H_n}{||H_n||}\Delta S\geq \frac{f}{||H_n||}.
\end{equation}
Since $\mathcal{C}_\mathcal{H}(\mathcal{P})$ is closed, there exist some $H\in\mathcal{C}_\mathcal{H}(\mathcal{P})= \mathcal{C}_\mathcal{H}(P'')$ with $||H||=1$ such that $H_{n_k}/||H_{n_k}||\rightarrow H$. Taking the limit along $(n_k)_k$, we have $H\Delta S\geq 0\ P''$-a.s. NA$(P'')$ implies $H\Delta S=0\ P''$-a.s. So $H\in\mathcal{C}_\mathcal{H}(P'')\cap N(P'')=\{0\}$, which contradicts $||H||=1$. Thus $(H_n)_n$ is bounded, and there exists $H''\in\mathbb{R}^d$, such that $(H_{n_j})_j\rightarrow H''$. Since $\mathcal{H}(\mathcal{P})$ is closed, $H''\in\mathcal{H}(\mathcal{P})$, which further implies $H''\in A_\alpha$. 

For any $H\in A_\alpha$, since $\alpha<\pi^\cP(f)$ by \eqref{e29}, there exist $P_H\in\mathcal{P}$ such that
$$P_H(\alpha+H\Delta S<f)>0.$$
It can be further shown that there exists $\delta_H>0$, such that for any $H'\in B(H,\delta_H)$,
$$P_H(\alpha+H'\Delta S<f)>0.$$
Since $A_\alpha\subset\cup_{H\in A_\alpha}B(H,\delta_H)$ and $A_\alpha$ is compact, there exists $(H_i)_{i=1}^n\subset A_\alpha$, such that $A_\alpha\subset\cup_{i=1}^n B(H_i,\delta_{H_i})$. Let
$$P':=\sum_{i=1}^n a_iP_{H_i}+a_0P''\in\mathcal{P},$$
where $\sum_{i=0}^n a_i=1$ and $a_i>0,\ i=0,\dotso,n$.
Then it is easy to see that for any $H\in\mathcal{H}(\mathcal{P})=\mathcal{H}(P'')=\mathcal{H}(P')$,
$$P'(\alpha+H\Delta S<f)>0,$$
which implies that
$$\alpha\leq\pi^{P'}(f)\leq\sup_{P\in\mathcal{P}}\pi^P(f),$$
which contradicts \eqref{e29}.
\end{proof}

\begin{lemma}\label{l3}
Let NA$(\mathcal{P})$ hold. If $\mathcal{H}(\mathcal{P})$ and $\mathcal{C}_\mathcal{H}(\mathcal{P})$ are closed, then the set
\begin{equation}\label{e30}
K(\mathcal{P}):=\{H\Delta S-X:\ H\in\mathcal{H},\ X\in L_+^0(\mathcal{P})\}
\end{equation}
is $\cP-q.s.$ closed. 
\end{lemma}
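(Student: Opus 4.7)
The plan is to adapt Step 1 of the proof of \leref{l1} to the non-dominated setting. Take a sequence $(Z_n)_n\subset K(\cP)$ with $Z_n=H_n\Delta S-X_n$, $H_n\in\cH$, $X_n\in L_+^0(\cP)$, such that $Z_n\to Z$ $\cP-q.s.$; the goal is to exhibit a decomposition $Z=H\Delta S-X$ with $H\in\cH$ and $X\in L_+^0(\cP)$. First I would invoke \leref{l12} (applicable since the closedness of $\mathcal{C}_\cH(\cP)$ is part of the hypothesis) to obtain $P''\in\cP$ with $N^\perp(P'')=N^\perp(\cP)$ and NA$(P'')$; the equality of orthogonal complements gives $\cH(P'')=\cH(\cP)$ and $\mathcal{C}_\cH(P'')=\mathcal{C}_\cH(\cP)$, so both closedness hypotheses transfer to $P''$. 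Replacing $H_n$ by its orthogonal projection $\hat H_n:=\text{proj}_{N^\perp(\cP)}(H_n)\in\cH(\cP)$ leaves $H_n\Delta S$ unchanged $\cP-q.s.$, so I may assume throughout that $H_n\in\cH(\cP)\subset N^\perp(\cP)$.

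Next I would show that $(H_n)_n$ is bounded in $\R^d$. Suppose for contradiction that $||H_{n_k}||\to\infty$ along a subsequence. Dividing $Z_n=H_n\Delta S-X_n$ through by $||H_n||$ and using $X_n\geq 0$ yields
\begin{equation}\notag
\frac{H_n}{||H_n||}\Delta S\geq\frac{Z_n}{||H_n||}\to 0\qquad\cP-q.s.,
\end{equation}
since $Z$ is finite $\cP-q.s.$. Compactness of the unit sphere, combined with closedness of the cone $\mathcal{C}_\cH(\cP)$ and of the subspace $N^\perp(\cP)$, produces a further subsequence along which $H_{n_k}/||H_{n_k}||\to H$ with $H\in\mathcal{C}_\cH(\cP)\cap N^\perp(\cP)$ and $||H||=1$. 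Passing to the limit in the display above gives $H\Delta S\geq 0$ $\cP-q.s.$, hence in particular $P''-a.s.$. Exactly as in the proof of \leref{l1}, NA$(P'')$ (extended from $\cH$ to $\cH(P'')$ via the $P''-a.s.$ identity with some lift $\tilde H\in\cH$, and then to $\mathcal{C}_\cH(P'')=\mathcal{C}_\cH(\cP)$ by nonnegative scaling) forces $H\Delta S=0$ $P''-a.s.$, whence $H\in N(P'')\cap N^\perp(P'')=\{0\}$, contradicting $||H||=1$.

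With boundedness in hand, a subsequence $H_{n_k}\to\hat H\in\R^d$ exists, and closedness of $\cH(\cP)$ (\asref{a3}) gives $\hat H\in\cH(\cP)$. Consequently $X_{n_k}=H_{n_k}\Delta S-Z_{n_k}\to\hat H\Delta S-Z=:X$ $\cP-q.s.$, and $X\geq 0$ $\cP-q.s.$ as a $\cP-q.s.$ limit of nonnegative random variables. Choosing any $H\in\cH$ with $\hat H=\text{proj}_{N^\perp(\cP)}(H)$ (possible by the definition of $\cH(\cP)$) gives $H\Delta S=\hat H\Delta S$ $\cP-q.s.$, so $Z=H\Delta S-X\in K(\cP)$, as required.

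The chief obstacle, compared with the single-measure case in \leref{l1}, is that $\cP-q.s.$ convergence offers no dominating control, and the no-arbitrage and closedness data are attached to the \emph{collection} $\cP$ rather than to any single measure. \leref{l12} is precisely what resolves this: it furnishes a reference measure $P''\in\cP$ that simultaneously carries NA and matches the linear-algebraic data $N^\perp(\cP)$, so that the classical one-measure norming argument can be run under $P''$ and then transported back to $\cP-q.s.$ statements via the projection onto $N^\perp(\cP)$.
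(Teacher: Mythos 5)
Your proof is correct and follows essentially the same norming-compactness argument as the paper: show $(H_n)_n$ is bounded by contradiction using no-arbitrage and the closedness of $\mathcal{C}_\mathcal{H}(\mathcal{P})$, then use closedness of $\mathcal{H}(\mathcal{P})$ to pass to a limit. The only structural deviation is your detour through Lemma~\ref{l12} to obtain $P''$ and run the contradiction under NA$(P'')$; this is harmless but unnecessary, since NA$(\mathcal{P})$ already applies directly (after lifting $H\in\mathcal{C}_\mathcal{H}(\mathcal{P})$ back to $\tilde H\in\mathcal{H}$ with $\tilde H\Delta S=H\Delta S$ up to positive scaling $\mathcal{P}$-q.s.), yielding $H\Delta S=0$ $\mathcal{P}$-q.s.\ and hence $H\in N(\mathcal{P})\cap N^\perp(\mathcal{P})=\{0\}$, as the paper does.
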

\begin{proof}
Let $W^n=H^n\Delta S-X^n\in K(\cP)\rightarrow W\ \cP-q.s.$, where w.l.o.g. $H^n\in\mathcal{H}(\cP)$ and $X^n\in L_+^0(\cP),\ n=1,2,\dotso$ If $(H^n)_n$ is not bounded, then without loss of generality, $0<||H^n||\rightarrow\infty$. Consider

\begin{equation}\label{e19}
\frac{W^n}{||H^n||}=\frac{H^n}{||H^n||}\Delta S-\frac{X^n}{||H^n||}.
\end{equation}
As $(H^n/||H^n||)_n$ is bounded, there exists some subsequence $(H^{n_k}/||H^{n_k}||)_k$ converging to some $H\in\R^d$ with $||H||=1$. Taking the limit in \eqref{e19} along $(n_k)_k$, we get that $H\Delta S\geq 0\ \cP-q.s.$. Because $(H^{n_k}/||H^{n_k}||)_k\in\mathcal{C}_\mathcal{H}(\cP)$ and $\mathcal{C}_\mathcal{H}(\cP)$ is closed, $H\in\mathcal{C}_\mathcal{H}(\cP)$. Hence $H\Delta S=0\ \cP-q.s.$ by NA$(\cP)$. Then $H\in\mathcal{C}_\mathcal{H}(\cP)\cap N(\cP)=\{0\}$, which contradicts $||H||=1$.

Therefore, $(H^n)_n$ is bounded and there exists some subsequence $(H^{n_j})_j$ converging to some $H'\in\R^d$. Since $\mathcal{H}(\cP)$ is closed, $H'\in\mathcal{H}(\cP)$. Let $X:=H'\Delta S-W\in L_+^0(\cP)$, then $W=H'\Delta S-X\in K(\cP)$.
\end{proof}
\begin{proof}[\textbf{Proof of \thref{t3}}]
We first show that $\pi^{\mathcal{P}}(f)>-\infty$ and the optimal super-hedging strategy exists. If $\pi^\cP(f)=\infty$ then we are done. If $\pi^\cP(f)=-\infty$, then for any $n\in\mathbb{N}$, there exists $H^n\in\mathcal{H}$ such that
$$H^n\Delta S\geq f+n\geq (f+n)\wedge 1,\quad\cP-q.s.$$
By \leref{l3}, there exists some $H\in\mathcal{H}$ such that $H\Delta S\geq 1\ \cP-q.s.$, which contradicts NA$(\cP)$. If $\pi^\cP(f)\in(-\infty,\infty)$, then for any $n\in\mathbb{N}$, there exists some $\tilde H^n\in\mathcal{H}$, such that $\pi^\cP(f)+1/n+\tilde H^n\Delta S\geq f$. \leref{l3} implies that there exists some $\tilde H\in\mathcal{H}$, such that $\pi^\cP(f)+\tilde H\Delta S\geq f$.

By \leref{l2},
\begin{equation}\label{e3}
\pi^\mathcal{P}(f)=\sup_{P\in\mathcal{P}}\pi^P(f)=\sup_{Q\in\mathcal{Q}}\pi^Q(f)=\sup_{Q\in\mathcal{Q}}\sup_{\substack{Q'\in\mathfrak{Q},\\Q'\sim Q}}(E_{Q'}[f]-A^{Q'})\leq\sup_{Q\in\mathfrak{Q}}(E_Q[f]-A^Q]),
\end{equation}
where we apply \thref{t1} for the second equality, and \cite[Proposition 9.23]{SF} for the third equality. Conversely, if $\pi^\mathcal{P}(f)=\infty$, then we are done. Otherwise let $x>\pi^\mathcal{P}(f)$, and there exist $H\in\mathcal{H}$, such that $x+H\Delta S\geq f\ \mathcal{P}-q.s.$. Then for any $Q\in\mathfrak{Q}$,
$$x\geq E_Q[f]-E_Q[H\Delta S]\geq E_Q[f]-A^Q.$$
By the arbitrariness of $x$ and $Q$, we have that 
$$\pi^\mathcal{P}(f)\geq\sup_{Q\in\mathfrak{Q}}(E_Q[f]-A^Q),$$
which together with \eqref{e3} implies \eqref{e2}.
\end{proof}

\section{ Optional decomposition in multiple periods}
\subsection{The set-up and the main result}
We use the set-up in Section 3. In addition, let $f:\ \Omega_T\mapsto\mathbb{R}$ be u.s.a. We further assume:
\begin{assumption}\label{a4}{\ }
\begin{itemize}
\item[(i)] For $t\in\{0,\dotso,T-1\}$ and $\omega\in\Omega_t$, $(\mathcal{H}_t(\omega))(\mathcal{P}_t(\omega))$ is convex and closed;
\item[(ii)] the map $A_t(\omega,Q):\ \Omega_t\times\mathfrak{P}(\Omega)\mapsto\mathbb{R}^*$,
$$A_t(\omega,Q)=\sup_{y\in\mathcal{H}_t(\omega)}yE_Q[\Delta S_t(\omega,\cdot)]$$
is l.s.a. on the set $\{(\omega,Q):\ E_Q|\Delta S_t(\omega,\cdot)|<\infty\}$.
\end{itemize}
\end{assumption}
\begin{remark}
Observe that $\Psi_{\mathcal{H}_t}$ defined in Assumption~\ref{a2} satisfies
\begin{equation}\label{e76}
\Psi_{\mathcal{H}_t}=\{(\omega,Q)\in\Omega_t\times\mathfrak{P}(\Omega):\ E_Q|\Delta S_t(\omega,\cdot)|<\infty,\ A_t(\omega,Q)\leq 0\}.
\end{equation}
Therefore, \asref{a4}(ii) implies \asref{a2}(iii).
\end{remark}
\begin{remark}
If Proposition ~\ref{p1} or \ref{p2} hold with $\mathfrak{H}_t=\cH_t$, then since $A_t=\varphi_{\mathfrak{H}_t}$ ($\varphi_{\mathfrak{H}_t}$ is defined in \eqref{eq:strngp}), \asref{a4}(ii) holds. See Example~\ref{ex:straex} for a case when this holds.
\end{remark}

For any $Q\in\mathfrak{P}(\Omega_T)$, there are Borel kernels $Q_t:\ \Omega_t\mapsto\mathfrak{P}(\Omega)$ such that $Q=Q_0\otimes\dotso\otimes Q_{T-1}$. For $E^Q[|\Delta S_t|\ |\mathcal{F}_t]<\infty\ Q$-a.s., define $A_t^Q(\cdot):=A_t(\cdot,Q_t(\cdot))$ for $t=0,\dotso,T-1$, and 
$$B_t^Q:=\sum_{i=0}^{t-1}A_i^Q,\quad t=1,\dotso,T$$
and set $B_0^Q=0$. Let 
$$\mathfrak{Q}:=\{Q\in\mathfrak{P}(\Omega_T):\ Q\lll\mathcal{P},\ E_Q[|\Delta S_t|\ |\mathcal{F}_t]<\infty\ Q\text{-a.s. for all } t, \text{ and }B_T^Q<\infty\ Q\text{-a.s.}\}.$$
Then it is not difficult to see that $\mathcal{Q}\subset\mathfrak{Q}$, where $\mathcal{Q}$ is defined in \eqref{e5}.\footnote{A rigorous argument is as follows. Let $Q=Q_0\otimes\dotso\otimes Q_{T-1}\in\cQ$, where $Q_t$ is a Borel kernels, $0\leq t\leq T-1$. It can be shown by a monotone class argument that the map $(\omega,y, Q')\mapsto yE_{Q'}[\Delta S(\omega,\cdot)]$ is Borel measurable for $(\omega,y,Q')\in\Omega_t\times\R^d\times\mathfrak{P}(\Omega)$. Hence the map $(\omega,y)\mapsto yE_{Q_t(\omega)}[\Delta S(\omega,\cdot)]$ is Borel measurable for $(\omega,y)\in\Omega_t\times\R^d$. Since Graph$(\cH_t)$ is analytic, by \cite[Proposition 7.50]{Shreve} there exists a u.m. selector $H_t^n(\cdot)\in\cH_t(\cdot)$, such that
$$A_t^Q(\omega)\wedge n-1/n\leq H_t^n(\omega)E_{Q_t(\omega)}[\Delta S_t(\omega,\cdot)]\leq 0,\ \text{for}\ Q\text{-a.s.}\ \omega\in\Omega_t,$$
where the second inequality follows from the local-supermartingale property of $H^n\cdot S$ with $H^n=(0,\dotso,0,H_t^n,0\dotso,0)\in\cH$. Sending $n\rightarrow\infty$ we get that $A_t^Q\leq 0\ Q$-a.s. for $t=0,\dotso,T-1$, and thus $Q\in\mathfrak{Q}$.
} Also if for each $t\in\{0,\dotso,T-1\}$ and $\omega\in\Omega_t$, $\mathcal{H}_t(\omega)$ is a convex cone, then $\mathfrak{Q}=\mathcal{Q}$. Below is the main result of this section.
\begin{theorem}\label{t4}
Let Assumptions \ref{a2} \& \ref{a4} and NA$(\mathcal{P})$ hold. Let $V$ be an adapted process such that $V_t$ is u.s.a. for $t=1,\dotso,T$. Then the following are equivalent:
\begin{itemize}
\item [(i)] $V-B^Q$ is a $Q$-local-supermartingale for each $Q\in\mathfrak{Q}$.
\item [(ii)] There exists $H\in\mathcal{H}$ and an adapted increasing process $C$ with $C_0=0$ such that
$$V_t=V_0+(H\cdot S)_t-C_t,\quad\mathcal{P}-q.s.$$
\end{itemize}
\end{theorem}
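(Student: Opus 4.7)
The easier direction (ii)$\Rightarrow$(i) I would verify directly. Given $V_t = V_0 + (H\cdot S)_t - C_t$, for $Q\in\mathfrak{Q}$ with disintegration $Q = Q_0\otimes\cdots\otimes Q_{T-1}$, the one-step increments satisfy
$$V_{t+1} - V_t - (B_{t+1}^Q - B_t^Q) = H_t\Delta S_t - \Delta C_t - A_t^Q,$$
and since $H_t(\omega)\in\mathcal{H}_t(\omega)$, the very definition of $A_t^Q$ gives $H_t(\omega)E_{Q_t(\omega)}[\Delta S_t(\omega,\cdot)]\leq A_t^Q(\omega)$. Localization by stopping times truncating $|V|$, $|\Delta S|$ and $|A^Q|$ then converts this to the local-supermartingale property of $V - B^Q$ under $Q$.

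For the harder direction (i)$\Rightarrow$(ii), I would construct $H$ and $C$ one period at a time. Fix $t\in\{0,\dots,T-1\}$. For $\mathcal{P}$-q.e.\ $\omega\in\Omega_t$, \leref{l6} gives NA$(\mathcal{P}_t(\omega))$, and Assumptions \ref{a2}(ii), \ref{a4}(i) supply the remaining hypotheses of \thref{t3}. Applied at $\omega$ with payoff $V_{t+1}(\omega,\cdot)$, \thref{t3} yields the pointwise one-period duality
$$\pi_t(\omega) := \pi^{\mathcal{P}_t(\omega)}\bigl(V_{t+1}(\omega,\cdot)\bigr) = \sup_{Q\in\mathfrak{Q}_t(\omega)}\bigl(E_Q[V_{t+1}(\omega,\cdot)] - A^Q(\omega)\bigr),$$
together with an attaining super-hedging strategy. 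The crucial step is to show $\pi_t\leq V_t$ $\mathcal{P}$-q.s. I would argue by contradiction: if $E := \{\pi_t > V_t\}$ were not $\mathcal{P}$-polar, I would measurably select a kernel $\omega\mapsto Q^t(\omega)\in\mathfrak{Q}_t(\omega)$ witnessing the strict inequality on $E$ (using Jankov--von Neumann together with Assumptions \ref{a2}(iii) and \ref{a4}(ii) to ensure the requisite analyticity), then glue $Q^t$ with kernels chosen by iterating \leref{l15} across the remaining periods, starting from a reference $P\in\mathcal{P}$ with $P(E)>0$. The resulting $Q\in\mathfrak{Q}$ would satisfy $E_Q[V_{t+1} - B_{t+1}^Q \mid \mathcal{F}_t] > V_t - B_t^Q$ on a $Q$-positive event, contradicting (i).

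Once $\pi_t\leq V_t$ is established $\mathcal{P}$-q.s., the existence part of \thref{t3} provides pointwise some $y\in\mathcal{H}_t(\omega)$ with $V_t(\omega)+y\Delta S_t(\omega,\cdot)\geq V_{t+1}(\omega,\cdot)$ $\mathcal{P}_t(\omega)$-q.s. The graph of such $(\omega,y)$ is analytic by Assumption \ref{a2}(iii), the u.s.a.\ property of $V_t, V_{t+1}$, and standard preservation of analyticity under quantification over an analytic family of measures; Jankov--von Neumann (\cite[Proposition 7.49]{Shreve}) then yields a u.m.\ selector $H_t$. Setting $H = (H_0,\dots,H_{T-1})\in\mathcal{H}$ and $C_t := V_0 + (H\cdot S)_t - V_t$ produces an adapted increasing process $C$ with $C_0 = 0$ and the required decomposition.

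The main obstacle is the gluing step behind $\pi_t\leq V_t$: one must convert a local measure $Q^t\in\mathfrak{Q}_t(\omega)$ (chosen u.m.\ in $\omega$) into a global $Q\in\mathfrak{Q}$ with prescribed $t$-th kernel, while preserving $Q\lll\mathcal{P}$ and $B_T^Q<\infty$ $Q$-a.s. This requires iterating \leref{l15} period-by-period with careful bookkeeping of dominating measures and integrability, and is precisely the type of measurable-selection argument developed in Section 3. A secondary technicality, the passage from the local-supermartingale hypothesis to pointwise conditional-expectation inequalities, should be handled by a standard simultaneous truncation of $V$, $\Delta S$, and $A^Q$.
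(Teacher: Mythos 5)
The direction (ii)$\Rightarrow$(i) of your proposal is essentially the paper's argument: a one-step increment computation using the definition of $A_t^Q$ together with a standard truncation to pass to the local-supermartingale property. For (i)$\Rightarrow$(ii), your proposal hits the right milestones (establish $\mathcal{E}_t(V_{t+1})\leq V_t\ \mathcal{P}$-q.s.\ via dynamic programming with a measurable $\eps$-optimizer in the $t$-th kernel; then select a super-hedging $H_t$ measurably and set $C_t:=V_0+(H\cdot S)_t-V_t$), but organizes the key inequality differently. The paper argues directly: it takes an \emph{arbitrary} $Q=Q_0\otimes\cdots\otimes Q_{T-1}\in\mathfrak{Q}$, replaces only the $t$-th kernel with a u.m.\ $\eps$-optimizer $Q_t^\eps$ for $\mathcal{E}_t(V_{t+1})$ (keeping all other kernels fixed), appeals to \leref{l7} to check the modification $Q'$ is still in $\mathfrak{Q}$, and then (i) applied to $Q'$ yields $V_t\geq\mathcal{E}_t(V_{t+1})\wedge\tfrac{1}{\eps}-\eps$ $Q$-a.s.\ because $Q=Q'$ on $\Omega_t$. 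Arbitrariness of $Q$ and \thref{t2} then upgrade this to $\mathcal{P}$-q.s. Your route by contradiction, gluing a witnessing kernel with kernels produced by iterating \leref{l15}, can also be made to work, but note that the verification that the glued measure lands in $\mathfrak{Q}$ is precisely \leref{l7}; the paper's modification-of-a-single-kernel approach is cleaner in this respect because the bookkeeping is trivial off period $t$.

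There is, however, a genuine gap in your measurable selection of $H_t$. You assert that the set of pairs $(\omega,y)$ with $V_t(\omega)+y\Delta S_t(\omega,\cdot)\geq V_{t+1}(\omega,\cdot)\ \mathcal{P}_t(\omega)$-q.s.\ is analytic by ``standard preservation of analyticity under quantification over an analytic family of measures.'' This is not correct: the q.s.\ requirement is a \emph{universal} quantifier over $P\in\mathcal{P}_t(\omega)$, and the class of analytic sets is closed under projections (existential quantification) but not under complementation or universal quantification; the set you describe is coanalytic, not analytic, so the Jankov--von Neumann theorem as you invoke it does not apply. The paper sidesteps this by using \leref{l8}, a measurable-selection version of \thref{t3} whose proof follows the more delicate argument of \cite[Lemma 4.10]{Nutz2} and produces the u.m.\ super-hedging selector directly, applied here with $\hat f=V_{t+1}$ combined with the already established inequality $\mathcal{E}_t(V_{t+1})\leq V_t$. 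You should replace your analyticity claim by a direct invocation of \leref{l8} (on the set $\Omega_t\setminus N_t$, with $N_t$ from \leref{l6}).
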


\subsection{Proof of \thref{t4}}
We first provide three lemmas for the proof of \thref{t4}. We shall prove Lemmas \ref{l9}\ \&\ \ref{l7} in the appendix.
\begin{lemma}\label{l9}
Let \asref{a4}(ii) hold, and define $\mathfrak{Q}_t:\ \Omega_t\twoheadrightarrow\mathfrak{P}(\Omega)$ by
\begin{equation}\label{e6}
\mathfrak{Q}_t(\omega):=\{Q\in\mathfrak{P}(\Omega):\ Q\lll\mathcal{P}_t(\omega),\ E_Q|\Delta S_t(\omega,\cdot)|<\infty,\ A_t(\omega,Q)<\infty\}.
\end{equation}
Then $\mathfrak{Q}_t$ has an analytic graph.
\end{lemma}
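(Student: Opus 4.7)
The plan is to express $\mathrm{Gr}(\mathfrak{Q}_t)$ as the intersection of three pieces corresponding to the three defining conditions of $\mathfrak{Q}_t$, and to show each piece is analytic. Set
\begin{align*}
A_1 &:= \{(\omega,Q)\in\Omega_t\times\PO \,:\, Q\lll\mathcal{P}_t(\omega)\},\\
A_2 &:= \{(\omega,Q)\in\Omega_t\times\PO \,:\, E_Q|\Delta S_t(\omega,\cdot)|<\infty\},\\
A_3 &:= \{(\omega,Q)\in A_2 \,:\, A_t(\omega,Q)<\infty\}.
\end{align*}
Then $\mathrm{Gr}(\mathfrak{Q}_t)=A_1\cap A_2\cap A_3$, and since analytic sets are closed under finite intersections, it suffices to handle each piece separately.

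For $A_2$, a standard monotone class argument---first on indicators of Borel rectangles via \cite[Proposition 7.26]{Shreve} (Borel measurability of $(\omega,Q)\mapsto E_Q[f(\omega,\cdot)]$ for bounded Borel $f$), then extended to $|\Delta S_t|$ by monotone convergence---shows that $(\omega,Q)\mapsto E_Q|\Delta S_t(\omega,\cdot)|$ is Borel $[0,\infty]$-valued, so $A_2$ is Borel. For $A_3$, \asref{a4}(ii) asserts that $A_t$ is l.s.a.\ on $A_2$, hence
$$A_3=\bigcup_{n\in\N}\bigl(\{A_t<n\}\cap A_2\bigr)$$
is a countable union of analytic sets and therefore analytic.

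The main step is the analyticity of $A_1$. I would realize $A_1$ as the projection onto $\Omega_t\times\PO$ of
$$\widetilde A_1:=\{(\omega,Q,P)\in\Omega_t\times\PO\times\PO \,:\, P\in\mathcal{P}_t(\omega),\ Q\ll P\}.$$
The first condition cuts out the pullback of $\mathrm{Gr}(\mathcal{P}_t)$ through the natural projection, and this set is analytic because $\mathrm{Gr}(\mathcal{P}_t)$ is analytic by the standing assumption of Section~3.1. The second condition cuts out the set $D:=\{(Q,P)\in\PO\times\PO \,:\, Q\ll P\}$, which is a Borel subset of $\PO\times\PO$; this is a classical fact about the Polish space of Borel probability measures and may be obtained by a Borel parametrization of Radon--Nikodym densities along a countable algebra generating $\mathcal{B}(\Omega)$. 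Thus $\widetilde A_1$ is the intersection of an analytic set and a Borel set, hence analytic, and $A_1$ is analytic as its projection under a Borel map.

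The main technical obstacle in the whole argument is the Borel-ness of the domination set $D$; once that is in hand the rest is bookkeeping with the stability properties of the analytic class under countable intersections, countable unions, and Borel-measurable projections. Combining the three analyticity statements yields that $\mathrm{Gr}(\mathfrak{Q}_t)=A_1\cap A_2\cap A_3$ is analytic, as required.
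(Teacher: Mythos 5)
Your proof is correct and follows essentially the same route as the paper's: the paper likewise realizes the $Q\lll\mathcal{P}_t(\omega)$ condition as the projection of $[\Omega_t\times J]\cap[\mathrm{graph}(\mathcal{P}_t)\times\mathfrak{P}(\Omega)]$ (where $J$ is the Borel domination set, cited from the argument in \cite[Lemma~4.8]{Nutz2}), and then intersects with $\{\hat A<\infty\}$ where $\hat A$ is the l.s.a.\ extension of $A_t$ by $+\infty$ off the integrability set. The only cosmetic difference is that the paper packages your $A_2\cap A_3$ into the single l.s.a.\ function $\hat A$, whereas you treat $A_2$ (Borel) and $A_3$ (countable union over sublevel sets) separately.
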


The following lemma, which is a measurable version of \thref{t3}, is parallel to \cite[Lemma 4.10]{Nutz2}. Given \thref{t3}, the proof of this lemma follows exactly the argument of \cite[Lemma 4.10]{Nutz2}, and thus we omit it here.
\begin{lemma}\label{l8}
Let NA$(\mathcal{P})$ and \asref{a4} hold, and let $t\in\{0,\dotso,T-1\}$ and $\hat f:\ \Omega_t\times\Omega\mapsto\mathbb{R}^*$ be u.s.a.. Then
$$\mathcal{E}_t(\hat f):\ \Omega_t\mapsto\mathbb{R}^*,\quad\mathcal{E}_t(\hat f)(\omega):=\sup_{Q\in\mathfrak{Q}_t(\omega)}(E_Q[\hat f(\omega,\cdot)]-A_t(\omega,Q))$$
is u.s.a.. Besides, there exists a u.m. function $y(\cdot):\ \Omega_t\mapsto\mathbb{R}^d$ with $y(\cdot)\in\mathcal{H}_t(\cdot)$, such that
$$\mathcal{E}_t(\hat f)(\omega)+y(\omega)\Delta S_t(\omega,\cdot)\geq \hat f(\omega,\cdot)\quad \mathcal{P}_t(\omega)-q.s.$$
for all $\omega\in\Omega_t$ such that NA$(\mathcal{P}_t(\omega))$ holds and $\hat f(\omega,\cdot)>-\infty\ \mathcal{P}_t(\omega)-q.s.$.
\end{lemma}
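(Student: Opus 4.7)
The plan is to mirror the argument of \cite[Lemma 4.10]{Nutz2}, using \thref{t3} and \leref{l9} in place of their pointwise one-period duality and their measurable selection for the unconstrained $\mathfrak{Q}_t$. The statement decomposes into two tasks: (a) verify $\mathcal{E}_t(\hat f)$ is u.s.a., and (b) construct a u.m. selector $y(\cdot)$ realizing the super-hedge.

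For (a), I would assemble three ingredients. First, by \leref{l9}, $\mathfrak{Q}_t$ has analytic graph in $\Omega_t \times \mathfrak{P}(\Omega)$. Second, since $\hat f$ is u.s.a., the map $(\omega, Q) \mapsto E_Q[\hat f(\omega,\cdot)]$ is u.s.a.\ on $\Omega_t \times \mathfrak{P}(\Omega)$ by \cite[Proposition 7.48]{Shreve} (after approximating $\hat f$ by bounded u.s.a.\ functions from below to handle the $[-\infty,\infty]$ range as in \cite{Nutz2}). Third, by \asref{a4}(ii), $(\omega, Q) \mapsto -A_t(\omega, Q)$ is u.s.a.\ on the analytic set $\{E_Q|\Delta S_t(\omega,\cdot)|<\infty\}$. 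Summing these on $\mathrm{Graph}(\mathfrak{Q}_t)$, the integrand $E_Q[\hat f] - A_t$ is u.s.a., and then \cite[Proposition 7.47]{Shreve} yields that the slice-wise supremum $\mathcal{E}_t(\hat f)(\omega) = \sup_{Q \in \mathfrak{Q}_t(\omega)}(E_Q[\hat f] - A_t)$ is u.s.a.

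For (b), \thref{t3} tells me that for each $\omega$ in the good set $G := \{\omega \in \Omega_t : \mathrm{NA}(\mathcal{P}_t(\omega)) \text{ holds and } \hat f(\omega,\cdot) > -\infty\ \mathcal{P}_t(\omega)\text{-q.s.}\}$, the value $\mathcal{E}_t(\hat f)(\omega)$ is finite or $+\infty$ and there exists $y \in \mathcal{H}_t(\omega)$ with $\mathcal{E}_t(\hat f)(\omega) + y\Delta S_t(\omega,\cdot) \geq \hat f(\omega,\cdot)$ $\mathcal{P}_t(\omega)$-q.s. I would then define
\[
\Xi := \bigl\{(\omega, y) \in \mathrm{Graph}(\mathcal{H}_t) : \mathcal{E}_t(\hat f)(\omega) + y\Delta S_t(\omega,\cdot) \geq \hat f(\omega,\cdot)\ \mathcal{P}_t(\omega)\text{-q.s.}\bigr\}
\]
and seek a u.m.\ selector of $\omega \mapsto \Xi_\omega$ on $G$, extending by $0$ (which lies in every $\mathcal{H}_t(\omega)$ by \asref{a2}(i)) off $G$ to obtain a u.m.\ selector on all of $\Omega_t$.

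The main obstacle is verifying that $\Xi$ is analytic so that the Jankov--von Neumann theorem \cite[Proposition 7.49]{Shreve} applies. Following \cite[Lemma 4.10]{Nutz2}, I would rewrite the q.s.\ inequality via a supremum over $P \in \mathcal{P}_t(\omega)$ of integrals of truncations $(\hat f - \mathcal{E}_t(\hat f) - y\Delta S_t)^+ \wedge n$; each such truncated integrand is u.s.a.\ in $(\omega, y, P)$ because $\hat f$ and $\mathcal{E}_t(\hat f)$ are u.s.a.\ while $y\Delta S_t$ is Borel, so the integral over $P$ is u.s.a., and taking the supremum over the analytic graph of $\mathcal{P}_t$ keeps it u.s.a. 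Intersecting the zero-level sets of these u.s.a.\ maps (equivalently, taking a countable intersection of analytic sets) and intersecting with $\mathrm{Graph}(\mathcal{H}_t)$ gives that $\Xi$ is analytic. Jankov--von Neumann then delivers the desired u.m.\ selector $y(\cdot)$, and the set $G$ itself is u.m.\ by \leref{l6} and the fact that $\{\omega : \hat f(\omega,\cdot) > -\infty\ \mathcal{P}_t(\omega)\text{-q.s.}\}$ is u.m.\ (its complement is the projection of an analytic set). Apart from this measurability bookkeeping, the construction is routine given \thref{t3}, which is why the authors defer to the parallel argument in \cite{Nutz2}.
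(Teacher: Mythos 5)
Your overall strategy is the right one: prove (a) that $\mathcal{E}_t(\hat f)$ is u.s.a.\ by combining \leref{l9}, the u.s.a.\ of $(\omega,Q)\mapsto E_Q[\hat f(\omega,\cdot)]$, and \asref{a4}(ii) with \cite[Proposition 7.47]{Shreve}, then (b) invoke \thref{t3} pointwise and measurably glue the one-period superhedges. Part (a) is fine. The trouble is in part (b), and it is not ``measurability bookkeeping'' -- it is the crux of \cite[Lemma 4.10]{Nutz2}.

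First, your set $\Xi$ is \emph{not} analytic by the argument you give. You write each constituent as a zero-level set $\{\varphi_n\le 0\}$ where $\varphi_n(\omega,y)=\sup_{P\in\mathcal{P}_t(\omega)}E_P[(\cdot)^+\wedge n]$ is u.s.a. But $\{\varphi_n\le 0\}$ is the complement of the analytic set $\{\varphi_n>0\}$, hence co-analytic, and the countable intersection of these is co-analytic, not analytic. The universal quantifier hidden in the ``$\mathcal{P}_t(\omega)$-q.s.'' condition is precisely what prevents $\Xi$ from being analytic. Consequently Jankov--von Neumann \cite[Proposition 7.49]{Shreve} does not apply to $\Xi$; one would need Kond\^o's $\Pi^1_1$-uniformization theorem (which does produce a universally measurable selector of a co-analytic set) or a reformulation that replaces the universal quantification over $\mathcal{P}_t(\omega)$ with something amenable to \cite[Proposition 7.50]{Shreve}. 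You cannot simply cite the same selection theorem that works for the sets with \emph{existential} structure (like $\mathrm{Graph}(\mathfrak{Q}_t)$ in \leref{l9}).

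Second, even the u.s.a.\ claim for the truncated integrand is unproven: $(\hat f(\omega,\cdot)-\mathcal{E}_t(\hat f)(\omega)-y\Delta S_t(\omega,\cdot))^+\wedge n$ involves $\hat f-\mathcal{E}_t(\hat f)$, a difference of two u.s.a.\ functions, and u.s.a.\ is not closed under subtraction (the negative of a u.s.a.\ function is l.s.a.). The fact that $\mathcal{E}_t(\hat f)$ depends only on $\omega$ does not immediately save you, because pulling it out of the truncation is not legitimate and composing a u.s.a.\ function of $(\omega,c,y,P)$ with the u.s.a.\ map $\omega\mapsto\mathcal{E}_t(\hat f)(\omega)$ need not give a u.s.a.\ function. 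Addressing this (e.g., via a Borel modification of $\mathcal{E}_t(\hat f)$ with respect to suitable kernels, or by restructuring the selection problem) is an essential and non-trivial step of the Bouchard--Nutz argument that you have not supplied.
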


\begin{lemma}\label{l7}
Let Assumptions \ref{a2} \& \ref{a4} and NA$(\mathcal{P})$ hold. Recall $\mathfrak{Q}_t$ defined in \eqref{e6}. We have that
\begin{equation}\notag
\mathfrak{Q}=\big\{Q_0\otimes\dotso\otimes Q_{T-1}:\ Q_t(\cdot)\text { is a u.m. selector of }\mathfrak{Q}_t,\ t=0,\dotso,T-1\big\}.
\end{equation}
\end{lemma}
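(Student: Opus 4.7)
The plan is to prove the two inclusions directly, relying on Lemma~\ref{l9} (analyticity of $\text{Gr}(\mathfrak{Q}_t)$), Jankov--von Neumann for u.m.\ selectors, and the correspondence between elements of $\mathfrak{P}(\Omega_T)$ and sequences of Borel kernels on $\Omega$. For $\supseteq$, let each $Q_t$ be a u.m.\ selector of $\mathfrak{Q}_t$ and set $Q := Q_0 \otimes \dotso \otimes Q_{T-1}$. The two finiteness conditions in the definition of $\mathfrak{Q}_t$ immediately yield $E_Q[|\Delta S_t|\,|\mathcal{F}_t](\omega) = E_{Q_t(\omega)}|\Delta S_t(\omega,\cdot)| < \infty$ and $A_t^Q(\omega) = A_t(\omega, Q_t(\omega)) < \infty$ for every $\omega$, hence $B_T^Q < \infty$ pointwise. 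For $Q \lll \mathcal{P}$, the section $\Gamma_t(\omega) := \{P \in \mathcal{P}_t(\omega):\ Q_t(\omega) \ll P\}$ is nonempty because $Q_t(\omega) \lll \mathcal{P}_t(\omega)$; after checking that $\text{Gr}(\Gamma_t)$ is analytic, Jankov--von Neumann produces a u.m.\ selector $P_t$ of $\mathcal{P}_t$ with $Q_t \ll P_t$ for all $\omega$, so $P := P_0 \otimes \dotso \otimes P_{T-1} \in \mathcal{P}$ dominates $Q$ by the kernelwise version of absolute continuity for product measures.

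For $\subseteq$, fix $Q \in \mathfrak{Q}$ and Borel kernels $(Q_t)_t$ with $Q = Q_0 \otimes \dotso \otimes Q_{T-1}$, and denote by $\mu_t$ the marginal of $Q$ on $\Omega_t$. By the definition of $\mathfrak{Q}$, the conditions $E_{Q_t(\omega)}|\Delta S_t(\omega,\cdot)| < \infty$ and $A_t(\omega, Q_t(\omega)) < \infty$ hold on $\mu_t$-full sets. Picking $P = P_0 \otimes \dotso \otimes P_{T-1} \in \mathcal{P}$ with $Q \ll P$ and disintegrating the density $dQ/dP$ along the product structure yields $Q_t(\omega) \ll P_t(\omega)$ for $\mu_t$-a.e.\ $\omega$, so $Q_t(\omega) \lll \mathcal{P}_t(\omega)$ on a $\mu_t$-full set. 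Hence the u.m.\ set $E_t := \{\omega \in \Omega_t:\ Q_t(\omega) \in \mathfrak{Q}_t(\omega)\}$ satisfies $\mu_t(E_t^c) = 0$. Using a u.m.\ selector $\widetilde Q_t$ of $\mathfrak{Q}_t$ on $\{\mathfrak{Q}_t \neq \emptyset\}$ (Lemma~\ref{l9} plus Jankov--von Neumann) and any fixed $\nu \in \mathfrak{P}(\Omega)$, define
$$Q_t^*(\omega) := Q_t(\omega)\,1_{E_t}(\omega) + \widetilde Q_t(\omega)\,1_{\{\mathfrak{Q}_t \neq \emptyset\} \setminus E_t}(\omega) + \nu\,1_{\{\mathfrak{Q}_t = \emptyset\}}(\omega).$$
Then $Q_t^*$ is a u.m.\ selector of $\mathfrak{Q}_t$; since $Q_t^* = Q_t$ off the $\mu_t$-null set $E_t^c$, the products satisfy $Q_0^* \otimes \dotso \otimes Q_{T-1}^* = Q$.

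The main obstacle in both directions is the kernel-level absolute continuity. In $\supseteq$ it takes the form of a measurable selection of a dominating $P_t \in \mathcal{P}_t$, which requires analyticity of $\text{Gr}(\Gamma_t)$ and thus Borel measurability of the relation $\ll$ on $\mathfrak{P}(\Omega)^2$. In $\subseteq$ it takes the form of factoring $dQ/dP$ into a product of kernel densities along the chain $\omega \mapsto (\omega_1) \mapsto \dotso \mapsto (\omega_1,\dotso,\omega_T)$, via iterated conditional expectations. Once these points are in hand, the remainder is routine bookkeeping on $\mu_t$-null and u.m.\ sets.
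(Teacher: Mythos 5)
Your $\subseteq$ direction is essentially the paper's: disintegrate $Q \in \mathfrak{Q}$ into Borel kernels, use $Q \ll P$ for some $P \in \mathcal{P}$ to conclude $Q_t(\omega) \lll \mathcal{P}_t(\omega)$ on a marginal-full set, hence $Q_t(\omega) \in \mathfrak{Q}_t(\omega)$ a.e., and patch on the residual null set via a selector of $\mathfrak{Q}_t$ furnished by Lemma~\ref{l9}. That half is fine.

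Your $\supseteq$ direction has a genuine gap. By the paper's definition, a u.m.\ selector of $\mathfrak{Q}_t$ only satisfies $Q_t(\omega) \in \mathfrak{Q}_t(\omega)$ on the set $\{\mathfrak{Q}_t \neq \emptyset\}$; on the complement it is unconstrained. The set $\{\mathfrak{Q}_t = \emptyset\}$ is in general nonempty: it is contained in the set $N_t$ where NA$(\mathcal{P}_t(\omega))$ fails (since by Theorem~\ref{t1} NA implies $\mathcal{Q}_t(\omega)\subset\mathfrak{Q}_t(\omega)$ is nonempty), and when NA fails $\mathfrak{Q}_t(\omega)$ can indeed be empty, e.g.\ if $\mathcal{H}_t(\omega)$ is an unbounded cone pointing in a direction $y$ with $y\Delta S_t\geq 0$ and $\mathcal{P}_t(\omega)$-q.s.\ nondegenerate, which forces $A_t(\omega,Q)=\infty$ for every candidate $Q$. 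So your assertions that $E_{Q_t(\omega)}|\Delta S_t(\omega,\cdot)| < \infty$, $A_t(\omega, Q_t(\omega)) < \infty$ and $\Gamma_t(\omega) \neq \emptyset$ hold \emph{for every} $\omega$ are false; they hold only on $\{\mathfrak{Q}_t \neq \emptyset\}$. To verify $Q := Q_0\otimes\dotso\otimes Q_{T-1} \in \mathfrak{Q}$ you must show these properties hold $Q$-a.s., which reduces to showing $\{\mathfrak{Q}_t = \emptyset\}$ is null for the marginal $Q^{t-1}$ of $Q$ on $\Omega_t$. This is exactly where NA$(\mathcal{P})$ --- a hypothesis you never invoke --- enters, and it forces a forward induction: at step $t$ one uses Lemma~\ref{l6} (NA $\Rightarrow$ $N_t$ is $\mathcal{P}$-polar, hence $(P_0\otimes\dotso\otimes P_{t-1})$-null for every choice of selectors) together with the already-established domination $Q^{t-1}\ll P_0\otimes\dotso\otimes P_{t-1}$ to conclude $\{\mathfrak{Q}_t=\emptyset\}\subset N_t$ is $Q^{t-1}$-null, and only then selects the next dominating kernel $P_t$. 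Without this bootstrap there is no reason the product $P_0\otimes\dotso\otimes P_{T-1}$ dominates $Q$, nor that $B_T^Q<\infty$ $Q$-a.s.
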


\begin{proof}[\textbf{Proof of \thref{t4}}]
(ii)$\implies$(i): For any $Q\in\mathfrak{Q}$,
$$V_{t+1}=V_t+H_t\Delta S_t-(C_{t+1}^Q-C_t^Q)\leq V_t+H_t\Delta S_t,\ Q\text{-a.s.}.$$
Hence,
$$E_Q[V_{t+1}|\mathcal{F}_t]\leq V_t+H_tE_Q[\Delta S_t|\mathcal{F}_t]\leq V_t+A_t^Q=V_t+B_{t+1}^Q-B_t^Q,$$
i.e., 
$$E_Q[V_{t+1}-B_{t+1}^Q|\mathcal{F}_t]\leq V_t-B_t^Q.$$
(i)$\implies$(ii): We shall first show that
\begin{equation}\label{e8}
\mathcal{E}_t(V_{t+1})\leq V_t,\quad\mathcal{P}-q.s.
\end{equation}
Let $Q=Q_1\otimes\dotso\otimes Q_{T-1}\in\mathfrak{Q}$ and $\eps>0$. The map $(\omega,Q)\rightarrow E_Q[V_{t+1}(\omega,\cdot)]-A_t(\omega,Q)$ is u.s.a., and graph$(\mathfrak{Q}_t)$ is analytic. As a result, by \cite[Proposition 7.50]{Shreve} there exists a u.m. selector $Q_t^\eps:\Omega_t\mapsto\mathfrak{P}(\Omega)$, such that $Q_t^\eps(\cdot)\in\mathfrak{Q}_t(\cdot)$ on $\{\mathfrak{Q}_t\neq\emptyset\}$ (whose complement is a $Q$-null set), and
$$E_{Q_t^\eps(\cdot)}[V_{t+1}]-A_t(\cdot,Q_t^\eps(\cdot))\geq \mathcal{E}_t(V_{t+1})\wedge\frac{1}{\eps}-\eps,\quad Q\text{-a.s.}$$
Define $$Q'=Q_1\otimes\dotso\otimes Q_{t-1}\otimes Q_t^\eps\otimes Q_{t+1}\otimes Q_{T-1}.$$
Then $Q'\in\mathfrak{Q}$ by \leref{l7}. Therefore,
$$E_{Q'}[V_{t+1}-B_{t+1}^{Q'}|\mathcal{F}_t]\leq V_t-B_t^{Q'},\quad Q'\text{-a.s.}$$
Noticing that $Q=Q'$ on $\Omega_t$, we have
$$V_t\geq E_{Q'}[V_{t+1}|\mathcal{F}_t]-A_t^{Q'}=E_{Q_t^\eps(\cdot)}[V_{t+1}]-A_t(\cdot,Q_t^\eps(\cdot))\geq \mathcal{E}_t(V_{t+1})\wedge\frac{1}{\eps}-\eps,\quad Q\text{-a.s.}.$$
By the arbitrariness of $\eps$ and $Q$, we have \eqref{e8} holds.

By \leref{l8}, there exists a u.m. function $H_t:\ \Omega_t\mapsto\mathbb{R}^d$ such that
$$\mathcal{E}_t(V_{t+1})(\omega)+H_t(\omega)\Delta S_{t+1}(\omega,\cdot)\geq V_{t+1}(\omega,\cdot)\quad\mathcal{P}_t(\omega)-q.s.$$
for $\omega\in\Omega_t\setminus N_t$. 
Fubini's theorem and \eqref{e8} imply that
$$V_t+H_t\Delta S_t\geq V_{t+1}\quad\mathcal{P}-q.s..$$
Finally, by defining $C_t:=V_0+(H\cdot S)_t-V_t$, the conclusion follows. 
\end{proof}

\section{Hedging European and American options in multiple periods}

\subsection{Hedging European options}
Let $f:\ \Omega_T\mapsto\mathbb{R}$ be a u.s.a. function, which represents the payoff of a European option. Define the super-hedging price
$$\pi(f):=\inf\{x:\ \exists H\in\mathcal{H},\ \text{s.t. }x+(H\cdot S)_T\geq f,\ \mathcal{P}-q.s.\}.$$
\begin{theorem}\label{t5}
Let Assumptions \ref{a2} \& \ref{a4} and NA$(\mathcal{P})$ hold. Then the super-hedging price is given by
\begin{equation}\label{e9}
\pi(f)=\sup_{Q\in\mathfrak{Q}}\left(E_Q[f]-E_Q[B_T^Q]\right).
\end{equation}
Moreover, $\pi(f)>-\infty$ and there exists $H\in\mathcal{H}$, such that $\pi(f)+(H\cdot S)_T\geq f\ \cP-q.s.$.
\end{theorem}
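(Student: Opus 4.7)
The plan is a backward dynamic programming argument glued together by measurable selection, with the one-period duality (\thref{t3}) and its measurable counterpart (\leref{l8}) doing the heavy lifting at each step. I will first construct a value process $V$, show $\pi(f)\le V_0$ together with the existence of an optimizer, and then establish the double inequality $V_0\le\sup_{Q\in\mathfrak{Q}}(E_Q[f]-E_Q[B_T^Q])\le\pi(f)$ to identify all three quantities.

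For the construction, set $V_T:=f$ and define backwards $V_t:=\mathcal{E}_t(V_{t+1})$, the operator of \leref{l8}. Iteratively, \leref{l8} yields that $V_t$ is u.s.a.\ on $\Omega_t$ and, on the set where NA$(\mathcal{P}_t(\omega))$ holds (whose complement $N_t$ is $\mathcal{P}$-polar by \leref{l6}), provides a u.m.\ selector $H_t(\cdot)\in\mathcal{H}_t(\cdot)$ with
\[
V_t(\omega)+H_t(\omega)\Delta S_t(\omega,\cdot)\ge V_{t+1}(\omega,\cdot),\quad \mathcal{P}_t(\omega)\text{-q.s.}
\]
Extending $H_t$ by $0$ on $N_t$ (via \asref{a2}(i)) yields $H=(H_t)_{t=0}^{T-1}\in\mathcal{H}$; a Fubini-type iteration across $t=0,\dots,T-1$ then gives $V_0+(H\cdot S)_T\ge V_T=f$ $\mathcal{P}$-q.s., so $\pi(f)\le V_0$. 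The finiteness part of \thref{t3}, applied backwards one period at a time, also guarantees $V_t>-\infty$ $\mathcal{P}$-q.s., so $\pi(f)>-\infty$ and $H$ is the claimed optimal super-hedging strategy.

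For the easy direction of the duality, fix a super-hedging pair $(x,H')$ with $x+(H'\cdot S)_T\ge f$ $\mathcal{P}$-q.s.\ and any $Q\in\mathfrak{Q}$. Since $H'_t\in\mathcal{H}_t$ and $E_Q[|\Delta S_t|\,|\,\mathcal{F}_t]<\infty$ $Q$-a.s., the very definition of $A_t$ forces $E_Q[H'_t\Delta S_t\,|\,\mathcal{F}_t]\le A_t^Q$ $Q$-a.s., i.e., $(H'\cdot S)-B^Q$ is a $Q$-local-supermartingale starting at $0$. A localization plus Fatou argument (under the $\infty-\infty=-\infty$ convention) yields $E_Q[(H'\cdot S)_T]\le E_Q[B_T^Q]$, whence $x\ge E_Q[f]-E_Q[B_T^Q]$; optimizing over $x$ and $Q$ gives $\pi(f)\ge\sup_{Q\in\mathfrak{Q}}(E_Q[f]-E_Q[B_T^Q])$.

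To close the loop I will show $V_0\le\sup_{Q\in\mathfrak{Q}}(E_Q[f]-E_Q[B_T^Q])$ by iterated measurable $\eps$-selection. For $\eps>0$ and each $t$, using the u.s.a.\ regularity of $(\omega,Q)\mapsto E_Q[V_{t+1}(\omega,\cdot)]-A_t(\omega,Q)$ together with the analytic graph of $\mathfrak{Q}_t$ (\leref{l9}), \cite[Prop.~7.50]{Shreve} supplies a u.m.\ selector $Q_t^\eps(\cdot)\in\mathfrak{Q}_t(\cdot)$ with
\[
E_{Q_t^\eps(\omega)}[V_{t+1}(\omega,\cdot)]-A_t(\omega,Q_t^\eps(\omega))\ge V_t(\omega)\wedge\eps^{-1}-\eps.
\]
\leref{l7} then lets me glue them into $Q^\eps:=Q_0^\eps\otimes\cdots\otimes Q_{T-1}^\eps\in\mathfrak{Q}$; iterating the above inequality under $E_{Q^\eps}$ and telescoping $B_T^{Q^\eps}=\sum_t A_t^{Q^\eps}$ produces $V_0\wedge\eps^{-1}-T\eps\le E_{Q^\eps}[f]-E_{Q^\eps}[B_T^{Q^\eps}]$, and $\eps\downarrow 0$ concludes. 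The main obstacle is precisely this telescoping: the chained one-step bounds must be propagated forward under conditional expectations that are a priori only defined in the extended sense, and one has to verify that the $\eps$-optimal $Q_t^\eps$ genuinely lie in $\mathfrak{Q}_t$ so that \leref{l7} produces a bona fide element of $\mathfrak{Q}$.
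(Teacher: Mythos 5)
Your construction of the value process $V$, the extraction of the super-hedging strategy via \leref{l8} and a Fubini iteration, the finiteness claim via \thref{t3} and \leref{l6}, and the easy inequality $\pi(f)\geq\sup_{Q\in\mathfrak{Q}}\big(E_Q[f]-E_Q[B_T^Q]\big)$ all match the paper's argument in substance. One cosmetic difference is that you unroll the proof of \thref{t4} inline rather than verifying that $(V_t-B_t^Q)_t$ is a $Q$-local-supermartingale and invoking the optional decomposition directly; that is an equivalent route.

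However, there is a genuine gap in the reverse inequality $V_0\leq\sup_{Q\in\mathfrak{Q}}\big(E_Q[f]-E_Q[B_T^Q]\big)$, and you even flag it yourself without resolving it. As written, the telescoping argument only works when $f$ is bounded from above. When $f$ (hence each $V_t$) may be $+\infty$ on non-polar sets, the one-step bound $V_t\wedge\eps^{-1}-\eps\leq E_{Q_t^\eps}[V_{t+1}]-A_t^{Q^\eps}$ does not cleanly compose under $E_{Q^\eps}$: you cannot iterate it forward because the truncation $V_t\wedge\eps^{-1}$ at one level does not match the untruncated $V_{t+1}$ that appears inside the expectation at the next, and the intermediate expectations may be $+\infty$ or ill-defined under the $\infty-\infty=-\infty$ convention. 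The paper handles this in two stages: it first proves the bound under the additional hypothesis that $f$ is bounded from above (so the chain of inequalities is legitimate), and then extends to a general u.s.a.\ $f$ by applying the bounded case to $f\wedge n$ and passing to the limit. The latter step requires a continuity-from-below property of $\mathcal{E}_t$, namely that $\mathcal{E}_t(v^n)\nearrow\mathcal{E}_t(v)$ whenever $v^n\nearrow v$ is an increasing sequence of $\mathcal{F}_{t+1}$-measurable functions, which the paper derives from \thref{t3} together with the $\mathcal{P}$-q.s.\ closedness of $K(\mathcal{P})$ established in \leref{l3}. Your proof needs this truncation-plus-continuity step to be complete.
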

\begin{proof}
It is easy to see that $\pi(f)\geq \sup_{Q\in\mathfrak{Q}}(E_Q[f]-E_Q[B_T^Q])$. We shall show the reverse inequality. Define $V_T=f$ and 
$$V_{t}=\mathcal{E}_t(V_{t+1}),\ t=0,\dotso,T-1.$$
Then $V_t$ is u.s.a. by \leref{l8} for $t=1,\dotso,T$. It is easy to see that $(V_t-B^Q_t)_t$ is a $Q$-local-supermartingale for each $Q\in\mathfrak{Q}$. Then by \thref{t4}, there exists $H\in\mathcal{H}$, such that
$$V_0+(H\cdot S)_T\geq V_T=f,\quad\mathcal{P}-q.s.$$
Hence $V_0\geq\pi(f)$. It remains to show that
\begin{equation}\label{e11}
V_0\leq\sup_{Q\in\mathfrak{Q}}\left(E_Q[f]-E_Q[B_T^Q]\right).
\end{equation}

First assume that $f$ is bounded from above. Then by \cite[Proposition 7.50]{Shreve}, \leref{l9} and \leref{l8}, we can choose a u.m. $\eps$ optimizer $Q_t^\eps$ for $\mathcal{E}_t$ in each time period. Define $Q^\eps:=Q_0^\eps\otimes\dotso\otimes Q_{T-1}^\eps\in\mathfrak{Q}$,  
$$V_0=\mathcal{E}_0\circ\dotso\circ\mathcal{E}_{T-1}(f)\leq E_{Q^\eps}[f-B_T^{Q^\eps}]+T\eps\leq\sup_{Q\in\mathfrak{Q}}E_Q[f-B_T^Q]+T\eps,$$
which implies \eqref{e11}. 

In general let $f$ be any u.s.a. function. Then we have
$$\mathcal{E}_0\circ\dotso\circ\mathcal{E}_{T-1}(f\wedge n)\leq\sup_{Q\in\mathfrak{Q}}\left(E_Q[f\wedge n]-E_Q[B_T^Q]\right).$$
Obviously the limit of the right hand side above is $\sup_{Q\in\mathfrak{Q}}\left(E_Q[f]-E_Q[B_T^Q]\right)$. To conclude that the limit of the left hand side is $\mathcal{E}_0\circ\dotso\circ\mathcal{E}_{T-1}(f)$, it suffices to show that for any $t\in\{0,\dotso,T-1\}$, and $\mathcal{F}_{t+1}$-measurable functions $v^n\nearrow v$, 
$$\gamma:=\sup_n\mathcal{E}_t(v^n)=\mathcal{E}_t(v),\ \cP-q.s..$$
Indeed, for $\omega\in \Omega_t\setminus N_t$,  by \thref{t3} $v^n(\omega)-\gamma(\omega)\in K(\cP(\omega))$, where $N_t$ and $K(\cdot)$ are defined in \eqref{e17} and \eqref{e30} respectively. Since $K(\cP(\omega))$ is closed by  \leref{l3}, $v(\omega)-\gamma(\omega)\in K(\cP(\omega))$, which implies $\gamma(\omega)\geq\mathcal{E}_t(v)(\omega)$ by \thref{t3}.

Finally, using a backward induction we can show that $V_t>-\infty\ \cP-q.s.,\ t=0,\dotso,T-1$ by \leref{l6} and \thref{t3}. In particular, $\pi(f)=V_0>-\infty$.
\end{proof}

\begin{corollary}\label{c1}
Let \asref{a4} and NA$(\mathcal{P})$ hold. Assume that for any $t\in\{0,\dotso,T-1\}$ and $\omega\in\Omega_t$, $\mathcal{H}_t(\omega)$ is a convex cone containing the origin. Then
$$\pi(f)=\sup_{Q\in\mathcal{Q}}E_Q[f].$$
\end{corollary}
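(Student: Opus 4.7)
The plan is to read the corollary off Theorem~\ref{t5}. Three things need to be checked in order: (a) that the hypotheses of Theorem~\ref{t5} are in fact available here, (b) that $\mathfrak{Q}=\mathcal{Q}$ under the conic assumption on $\mathcal{H}_t(\omega)$, and (c) that the penalty $E_Q[B_T^Q]$ vanishes for every $Q\in\mathfrak{Q}$.

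For (a), I would verify Assumption~\ref{a2}. Part~(i) is contained in the hypothesis ``$\mathcal{H}_t(\omega)$ is a convex cone containing the origin.'' Part~(iii) follows from Assumption~\ref{a4}(ii) through the representation \eqref{e76}, exactly as noted in the remark preceding this corollary. Part~(ii) follows from Assumption~\ref{a4}(i) by observing that, when $\mathcal{H}_t(\omega)$ is itself a convex cone with $0\in\mathcal{H}_t(\omega)$, its projection onto $N^\perp(\mathcal{P}_t(\omega))$ is again a convex cone, so $\mathcal{C}_{\mathcal{H}_t(\omega)}(\mathcal{P}_t(\omega))$ coincides with $\mathcal{H}_t(\omega)(\mathcal{P}_t(\omega))$, which is convex and closed by assumption.

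For (b) and (c), the key observation is the dichotomy $A_t(\omega,Q)\in\{0,+\infty\}$ on the set where $E_Q|\Delta S_t(\omega,\cdot)|<\infty$. On the one hand, $0\in\mathcal{H}_t(\omega)$ gives $A_t(\omega,Q)\geq 0$; on the other hand, if some $y_0\in\mathcal{H}_t(\omega)$ satisfied $y_0 E_Q[\Delta S_t(\omega,\cdot)]>0$, then the cone property would yield $cy_0\in\mathcal{H}_t(\omega)$ for every $c\geq 0$, forcing $A_t(\omega,Q)=+\infty$. Consequently, for any $Q\in\mathfrak{Q}$ the requirement $B_T^Q<\infty$ $Q$-a.s.\ collapses to $A_t^Q=0$ $Q$-a.s.\ for each $t$, whence $B_T^Q=0$ $Q$-a.s. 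The identity $A_t^Q=0$ also shows, by the same u.m.\ selector argument spelled out in the footnote defining $\mathfrak{Q}$ (run now in reverse), that $H\cdot S$ is a $Q$-local-supermartingale for every $H\in\mathcal{H}$; hence $Q\in\mathcal{Q}$. Together with the inclusion $\mathcal{Q}\subset\mathfrak{Q}$ noted in the main text, this yields $\mathfrak{Q}=\mathcal{Q}$.

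Plugging $B_T^Q\equiv 0$ and $\mathfrak{Q}=\mathcal{Q}$ into \eqref{e9} produces the claimed formula $\pi(f)=\sup_{Q\in\mathcal{Q}}E_Q[f]$, and the existence of an optimal $H$ is inherited verbatim from Theorem~\ref{t5}. There is no real obstacle here; the only point that requires thought is the zero-or-infinity dichotomy for $A_t^Q$, which is precisely the mechanism by which the conic structure flattens the dual representation.
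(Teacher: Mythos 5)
Your proposal is correct and follows essentially the same route as the paper, which simply cites \eqref{e76} together with the facts that $\mathfrak{Q}=\mathcal{Q}$ and $B_T^Q=0$ under the conic hypothesis; you have merely unpacked why those facts hold (the zero-or-infinity dichotomy for $A_t$, the identity $\mathcal{C}_{\mathcal{H}_t(\omega)}(\mathcal{P}_t(\omega))=(\mathcal{H}_t(\omega))(\mathcal{P}_t(\omega))$ when $\mathcal{H}_t(\omega)$ is a cone, etc.). Nothing to correct.
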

\begin{proof}
This follows from \eqref{e76} and that $\mathfrak{Q}=\mathcal{Q}$ and $B_T^Q=0$ for any $Q\in\mathcal{Q}$.
\end{proof}

\subsection{Hedging American options}
We consider the sub- and super-hedging prices of an American option in this subsection. The same problems are analyzed in \cite{BHZ} but without portfolio constraints. The analysis here is essentially the same, so we only provide the results and the main ideas for their proofs. For more details and discussion see \cite{BHZ}.

For $t\in\{0,\dotso,T-1\}$ and $\omega\in\Omega_t$, define
$$\mathfrak{Q}^t(\omega):=\{Q_t(\omega)\otimes\dotso\otimes Q_{T-1}(\omega,\cdot):\ Q_i\text { is a u.m. selector of }\mathfrak{Q}_i,\ i=t,\dotso,T-1\}.$$
In particular $\mathfrak{Q}^0=\mathfrak{Q}$. Assume graph$(\mathfrak{Q}^t)$ is analytic. Let $\mathcal{T}$ be the set of stopping times with respect to the raw filtration $(\mathcal{B}(\Omega_t))_t$, and let $\mathcal{T}_t\subset\mathcal{T}$ be the set of stopping times that are no less than $t$. 

Let $\f=(f_t)_t$ be the payoff of the American option. Assume that $\f_t\in\mathcal{B}(\Omega_t),\ t=1,\dotso,T$, and $\f_\tau\in L^1(Q)$ for any $\tau\in\mathcal{T}$ and $Q\in\fQ$. Define the sub-hedging price:
$$\underline\pi(\f):=\sup\{x:\ \exists (H,\tau)\in\mathcal{H}\times\mathcal{T},\ \text{s.t.}\ \f_\tau+(H\cdot S)_\tau\geq x,\ \mathcal{P}-q.s.\},$$
and the super-hedging price:
$$\overline\pi(\f):=\inf\{x:\ \exists H\in\mathcal{H},\text{ s.t. } x+(H\cdot S)_\tau\geq \f_\tau,\ \mathcal{P}-q.s.,\ \forall \tau\in\mathcal{T}\}.$$

\begin{proposition}
(i) The sub-hedging price is given by
\begin{equation}\label{e20}
\underline\pi(\f)=\sup_{\tau\in\T}\inf_{Q\in\fQ}E_Q[\f_\tau+B_T^Q].
\end{equation}
(ii) For $t\in\{1,\dotso,T-1\}$, assume that the map 
$$\phi_t:\ \Omega_t\times\mathfrak{P}(\Omega_{T-t})\mapsto\mathbb{R}^*,\ \phi_t(\omega,Q)=\sup_{\tau\in\T_t}E_Q\left[\f_\tau(\omega,\cdot)-\sum_{i=t}^{\tau-1}A_i^Q(\omega,\cdot)\right]$$
is u.s.a. Then 
\begin{equation}\label{e21}
\overline\pi(\f)=\sup_{\tau\in\T}\sup_{Q\in\fQ}E_Q[\f_\tau-B_\tau^Q],
\end{equation}
and there exists $H\in\mathcal{H}$, such that $\overline\pi(f)+(H\cdot S)_\tau\geq\f_\tau, \cP-q.s.,\ \forall\tau\in\mathcal{T}$.
\end{proposition}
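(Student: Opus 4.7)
My plan is to follow \cite{BHZ} closely, inserting at each step the local adjustment processes $A^Q$, $B^Q$ that are forced on us by the portfolio constraints. The two workhorses from the preceding sections are the European super-hedging duality \thref{t5}, which handles a single fixed stopping time, and the optional decomposition \thref{t4}, which promotes a Snell-type envelope to an admissible super-hedging strategy. No measurable selection over $\tau$ is needed for part (i) because $\sup_\tau$ is an outer operation, whereas in part (ii) it is needed, and this is exactly where the extra u.s.a.\ hypothesis on $\phi_t$ enters.

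For part (i) the upper bound is a quick computation: given any admissible pair $(H,\tau)$ with $\f_\tau+(H\cdot S)_\tau\geq x$ quasi surely and any $Q\in\fQ$, the local-supermartingale property of $(H\cdot S)-B^Q$ combined with optional sampling yields $E_Q[(H\cdot S)_\tau]\leq E_Q[B_\tau^Q]\leq E_Q[B_T^Q]$, the second inequality because $0\in\cH_t$ forces $A_t^Q\geq 0$, so that $B^Q$ is nondecreasing; taking $E_Q$ in the super-hedging inequality gives $x\leq E_Q[\f_\tau+B_T^Q]$. For the reverse direction, fix $\tau\in\T$ and exploit the purely definitional identity
\[
\underline\pi_E(\f_\tau):=\sup\{x:\exists H\in\cH,\ x\leq \f_\tau+(H\cdot S)_\tau\ \cP\text{-q.s.}\}=-\pi^{(\leq\tau)}(-\f_\tau),
\]
which holds without any symmetry assumption on $\cH$ (just set $x=-y$ and move $(H\cdot S)_\tau$ to the other side). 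Applying \thref{t5} to the $\tau$-period truncation of the market -- admissible because $0\in\cH_t$ lets any strategy on $[0,\tau-1]$ be extended by zeros to an element of $\cH$ -- delivers $\underline\pi_E(\f_\tau)=\inf_Q E_Q[\f_\tau+B_\tau^Q]$. To pass from $B_\tau^Q$ to $B_T^Q$ I would use a kernel-replacement: given any $Q=Q_0\otimes\cdots\otimes Q_{T-1}\in\fQ$, \leref{l15} supplies u.m.\ selectors $\tilde Q_t(\cdot)\in\cQ_t(\cdot)$ for $t\geq\tau$ (so $A_t^{\tilde Q}\equiv 0$), and the glued measure $\tilde Q:=Q_0\otimes\cdots\otimes Q_{\tau-1}\otimes\tilde Q_\tau\otimes\cdots\otimes\tilde Q_{T-1}$ lies in $\fQ$, agrees with $Q$ on $\cF_\tau$, and satisfies $B_T^{\tilde Q}=B_\tau^Q$. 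Taking $\sup_\tau$ at the end delivers \eqref{e20}.

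For part (ii) I would build the Snell-type envelope
\[
V_T:=\f_T,\qquad V_t:=\f_t\vee\cE_t(V_{t+1}),\quad t=T-1,\dots,0,
\]
with $\cE_t$ the one-step operator of \leref{l8}. Upper semianalyticity of each $V_t$ is verified by backward induction, and this is precisely where the u.s.a.\ hypothesis on $\phi_t$ is used to keep the Snell-type recursion measurable. By construction $V-B^Q$ is a $Q$-local supermartingale for every $Q\in\fQ$, so \thref{t4} produces an $H\in\cH$ and a nondecreasing $C$ with $V_0+(H\cdot S)_t-C_t=V_t\geq\f_t$, hence $V_0+(H\cdot S)_\tau\geq \f_\tau$ $\cP$-q.s.\ for every $\tau\in\T$, which simultaneously yields $\overline\pi(\f)\leq V_0$ and the existence of the optimal super-hedging strategy. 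The matching bound $\overline\pi(\f)\geq \sup_{\tau,Q}E_Q[\f_\tau-B_\tau^Q]$ is the same optional-sampling argument as in (i) run in the opposite direction. To identify $V_0$ with that supremum, I would iterate $\eps$-optimal measurable selection: at each $t$ the u.s.a.\ of $\phi_t$ combined with \cite[Proposition 7.50]{Shreve} yields a u.m.\ kernel $Q_t^\eps$ and a u.m.\ stopping rule that attain the essential supremum in $V_t$ up to $\eps$; glueing across periods produces $(Q^\eps,\tau^\eps)\in\fQ\times\T$ with $V_0\leq E_{Q^\eps}[\f_{\tau^\eps}-B_{\tau^\eps}^{Q^\eps}]+T\eps$, and $\eps\downarrow 0$ finishes.

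The main obstacle I expect is the analyticity bookkeeping in part (ii): the whole backward recursion collapses if $V_t$ fails to remain upper semianalytic, and the u.s.a.-property of $\phi_t$ is exactly the hypothesis needed to push this through -- both for $V_t$ itself and for the measurable $\eps$-optimal selection used to produce the dual optimizer $(Q^\eps,\tau^\eps)$. This is also why the proposition imposes u.s.a.\ on $\phi_t$ in (ii) but not in (i), where the outer $\sup_\tau$ obviates the selection issue.
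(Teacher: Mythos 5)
Your route for (ii) is a legitimate variant of the paper's argument, but (i) as written has a genuine gap. You invoke ``Theorem~\ref{t5} applied to the $\tau$-period truncation of the market.'' Theorem~\ref{t5} is only proved for a fixed deterministic horizon $T$, and there is no version in the paper for a random horizon. To use a stopped market one would have to define $\tilde S_t := S_{t\wedge\tau}$ and $\tilde\cH_t(\omega) := \cH_t(\omega)\,1_{\{\tau(\omega)>t\}}+\{0\}\,1_{\{\tau(\omega)\leq t\}}$, and then re-verify Assumptions~\ref{a2} and \ref{a4} for that market (including, e.g., lower semianalyticity of the modified $A_t$); none of this is in your proposal. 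The kernel-replacement from $B_\tau^Q$ to $B_T^Q$ is a correct observation but turns out to be unnecessary. The paper instead first establishes $\underline\pi(\f)=\beta:=\sup\{x:\exists(H,\tau),\ \f_\tau+(H\cdot S)_T\geq x\ \cP\text{-q.s.}\}$, i.e.\ moves the hedging inequality from time $\tau$ to time $T$: the inequality $\underline\pi(\f)\leq\beta$ uses $0\in\cH_t$ to extend $H$ by zeros after $\tau$, and $\beta\leq\underline\pi(\f)$ uses NA$(\cP)$ to show that $\f_\tau+(H\cdot S)_T\geq x$ already forces $\f_\tau+(H\cdot S)_\tau\geq x$ (the continuation $(H_t 1_{\{t\geq\tau\}\cap D})_t$ would otherwise be an arbitrage). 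After that one application of the original Theorem~\ref{t5}, fixed at horizon $T$, yields $\inf_Q E_Q[\f_\tau+B_T^Q]$ with no stopped market and no kernel surgery. Your optional-sampling bound in (i) also assumes optional sampling for a local supermartingale, which needs additional integrability that you do not verify; the paper's route sidesteps this by delegating all integrability bookkeeping to Theorem~\ref{t5}.

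For (ii) your backward recursion $V_t:=\f_t\vee\cE_t(V_{t+1})$ is a genuinely different construction from the paper's, which defines $V_t$ in closed form as $\sup_{Q\in\fQ^t}\sup_{\tau\in\T_t}E_Q[\f_\tau-\sum_{i=t}^{\tau-1}A_i^Q]$. Your definition makes the supermartingale property of $V-B^Q$ immediate but requires a separate dynamic-programming argument (the $\eps$-optimal selection and glueing you sketch) to identify $V_0$ with $\sup_{\tau,Q}E_Q[\f_\tau-B_\tau^Q]$; the paper's definition makes that identity definitional and uses the u.s.a.\ hypothesis on $\phi_t$ together with analyticity of graph$(\fQ^t)$ to get upper semianalyticity directly, at the cost of having to verify the supermartingale property from the formula. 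Both are workable; the paper's version avoids the recursion-to-supremum reconciliation and is shorter.
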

\begin{proof}
(i) We first show that
$$\underline\pi(f)=\sup\{x:\ \exists (H,\tau)\in\mathcal{H}\times\mathcal{T},\ \text{s.t.}\ \f_\tau+(H\cdot S)_T\geq x,\ \mathcal{P}-q.s.\}=:\beta.$$
For any $x<\underline\pi(f)$, there exists $(H,\tau)\in\mathcal{H}\times\mathcal{T}$, such that $\f_\tau+(H\cdot S)_\tau\geq x\ \cP-q.s.$. Define $H':=(H_t1_{\{t<\tau\}})_t$. For $t=0,\dotso,T-1$, since $\{t<\tau\}\in\mathcal{B}(\Omega_t)$, $H'_t(\cdot)$ is u.m.; besides, $H'_t(\cdot)$ is equal to either $H_t(\cdot)\in\cH_t(\cdot)$ or $0\in\cH_t(\cdot)$. Hence $H'\in\cH$. Then $\f_\tau+(H'\cdot S)_T=\f_\tau+(H\cdot S)_\tau\geq x\ \cP-q.s$, which implies $x\leq\beta$, and thus $\underline\pi(f)\leq\beta$. 

Conversely, for $x<\beta$, there exists $(H,\tau)\in\mathcal{H}\times\mathcal{T}$, such that $\f_\tau+(H\cdot S)_T\geq x\ \cP-q.s.$ Then we also have that $\f_\tau+(H\cdot S)_\tau\geq x\ \cP-q.s.$. To see this, let us define $D:=\{\f_\tau+(H\cdot S)_\tau<x\}$ and $H':=(H_t1_{\{t\geq\tau\}\cap D})_t\in\mathcal{H}$. We get that
$$(H'\cdot S)_T=[(H\cdot S)_T-(H\cdot S)_\tau]1_D\geq 0\ \cP-q.s.,\text{ and }(H'\cdot S)_T>0\ \cP-q.s. \text{ on}\ D.$$
NA$(\cP)$ implies $D$ is $\cP$-polar. Therefore $x\leq\underline\pi(f)$, and thus $\beta\leq\underline\pi(f)$.

It can be shown that
$$\underline\pi(\f)=\beta=\sup_{\tau\in\T}\sup\{x:\ \exists H\in\mathcal{H}:\ \f_\tau+(H\cdot S)_T\geq x,\ \mathcal{P}-q.s.\}=\sup_{\tau\in\T}\inf_{Q\in\fQ}E_Q[\f_\tau+B_T^Q],$$
where we apply \thref{t5} for the last equality above.\\
(ii) Define
$$V_t:\ \Omega_t\mapsto\mathbb{R}^*,\ \ \ V_t=\sup_{Q\in\mathfrak{Q}^t}\sup_{\tau\in\mathcal{T}_t}E_Q\left[\f_\tau(\omega,\cdot)-\sum_{i=t}^{\tau-1}A_i^Q(\omega,\cdot)\right].$$
It can be shown that $V_t$ is u.s.a. for $t=1,\dotso,T$ and $(V_t-B_t^Q)_t$ is a $Q$-supermartingale for each $Q\in\mathfrak{Q}$. By \thref{t4}, there exists $H\in\mathcal{H}$ such that
$$V_0+(H\cdot S)_\tau\geq\f_\tau, \mathcal{P}-q.s.,\ \forall\tau\in\T.$$
Therefore, $\sup_{\tau\in\T}\sup_{Q\in\fQ}E_Q[\f_\tau-B_\tau^Q]=V_0\leq\overline\pi(\f)$. The reverse inequality is easy to see.
\end{proof}

\begin{remark}
In \eqref{e20} and \eqref{e21}, the penalization terms are $B_T^Q$ and $B_\tau^Q$ respectively. In fact, similar to the argument in (i) above, one can show that
\begin{eqnarray}
\hat\pi(f)&:=&\inf\{x:\ \forall\tau\in\T,\ \exists H\in\mathcal{H},\text{ s.t. } x+(H\cdot S)_\tau\geq \f_\tau,\ \mathcal{P}-q.s.\}\notag\\
&=&\sup_{\tau\in\T}\inf\{x:\ \exists H\in\mathcal{H},\text{ s.t. } x+(H\cdot S)_\tau\geq \f_\tau,\ \mathcal{P}-q.s.\}\notag\\
&=&\sup_{\tau\in\T}\inf\{x:\ \exists H\in\mathcal{H},\text{ s.t. } x+(H\cdot S)_T\geq \f_\tau,\ \mathcal{P}-q.s.\}\\
&=&\sup_{\tau\in\T}\sup_{Q\in\fQ}E_Q[\f_\tau-B_T^Q]\notag
\end{eqnarray}
Even though the definition of $\hat\pi(f)$ is less useful for super-hedging since the stopping time should not be known in advance, it suggests that $B_T^Q$ comes from knowing $\tau$ in advance (compare $\underline\pi(f)$ and $\hat\pi(f)$). It is also both mathematically and financially meaningful that $\hat\pi(f)\leq\overline\pi(f)$. However, it is interesting that when $B^Q$ vanishes (e.g., when $\mathcal{H}_t(\cdot)$ is a cone), then $\hat\pi(f)=\overline\pi(f)$.
\end{remark}

\section{FTAP and super-hedging in multiple periods with options}

Let us use the set-up in Section 3. In addition, let $g=(g^1,\dotso,g^e):\ \Omega_T\mapsto\mathbb{R}^e$ be Borel measurable, and each $g^i$ is seen as an option which can and only can be traded at time $t=0$ without constraints. Without loss of generality we assume the price of each option is $0$. In this section, we say NA$(\cP)^g$ holds if for any $(H,h)\in\cH\times\mathbb{R}^e$,
$$(H\cdot S)_T+hg\geq 0\ \cP-q.s.\ \Longrightarrow\ (H\cdot S)_T+hg=0\ \cP-q.s..$$
Obviously NA$(\cP)^g$ implies NA$(\cP)$. 

\begin{definition}
$f:\ \Omega_T\mapsto\mathbb{R}$ is replicable (by stocks and options), if there exists some $x\in\mathbb{R},\ h\in\mathbb{R}^e$ and $H\in\mathcal{H}$, such that
$$x+(H\cdot S)_T+hg=f\quad\text{or}\quad x+(H\cdot S)_T+hg=-f.$$
\end{definition}
Let
$$\mathcal{Q}_g:=\{Q\in\mathcal{Q}:\ E_Q[g]=0\}.$$
Below is the main result of this section:
\begin{theorem} Let assumptions in \coref{c1} hold. Also assume that $g^i$ is not replicable by stocks and other options, and $g^i\in L^1(\cQ),\ i=1,\dotso,e$. Then we have the following.\\
(i) NA$(\mathcal{P})^g$ holds if and only if for each $P\in\mathcal{P}$, there exists $Q\in\mathcal{Q}_g$ dominating $P$.\\
(ii) Let NA$(\mathcal{P})^g$ holds. Let $f:\ \Omega_T\mapsto\mathbb{R}$ be Borel measurable such that $f\in L^1(\cQ)$. Then
\begin{equation}\label{e22}
\pi(f):=\inf\{x\in\mathbb{R}:\ \exists(H,h)\in\mathcal{H}\times\mathbb{R}^e\text{ s.t. }x+(H\cdot S)_T+hg\geq f,\ \mathcal{P}-q.s.\}=\sup_{Q\in\mathcal{Q}_g}E_Q[f].
\end{equation}
Moreover, there exists $(H,h)\in\mathcal{H}\times\R^e$, such that $\pi(f)+(H\cdot S)_T+hg\geq f\ \mathcal{P}-q.s.$.\\
(iii) Assume in addition $\mathcal{H}=-\mathcal{H}$. Let NA$(\mathcal{P})^g$ hold and $f:\ \Omega_T\mapsto\mathbb{R}$ be Borel measurable satisfying $f\in L^1(\cQ_g)$. Then the following are equivalent:
\begin{itemize}
\item[(a)] $f$ is replicable;
\item[(b)] The mapping $Q\mapsto E_Q[f]$ is a constant on $\mathcal{Q}_g$;
\item[(c)] For all $P\in\mathcal{P}$ there exists $Q\in\mathcal{Q}_g$ such that $P\ll Q$ and $E_Q[f]=\pi(f)$.
\end{itemize}
Moreover, the market is complete\footnote{That is, for any Borel measurable function $f:\ \Omega_T\mapsto\mathbb{R}$ satisfying $f\in L_g^1(\cQ)$, $f$ is replicable.}if and only if $\mathcal{Q}_g$ is a singleton.
\end{theorem}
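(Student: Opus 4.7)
The strategy for all three parts is to reduce systematically to the results of Sections 3 and 6 by treating $g=(g^1,\dotso,g^e)$ as auxiliary assets traded statically at time zero. Because each $\cH_t(\omega)$ is a convex cone containing the origin (from the hypotheses of \coref{c1}), \coref{c1} already provides the stock-only super-hedging duality $\pi_{\text{stock}}(\cdot)=\sup_{Q\in\cQ}E_Q[\cdot]$. The point is to see how a static option position $h\in\R^e$, which simply shifts the payoff to super-replicate from $f$ to $f-hg$ at zero cost, interacts with this duality.

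For part (i), the direction ``$\Leftarrow$'' is routine: if $Q\in\cQ_g$ dominates $P$ and $(H\cdot S)_T+hg\geq 0\ \cP$-q.s., then $(H\cdot S)_T$ is bounded below by the $Q$-integrable variable $-hg$, so the $Q$-local-supermartingale $H\cdot S$ becomes a genuine supermartingale via Fatou; together with $E_Q[hg]=0$, the $Q$-expectation of $(H\cdot S)_T+hg$ is non-positive, forcing $(H\cdot S)_T+hg=0$ $Q$-a.s., hence $P$-a.s. For ``$\Rightarrow$'', since $\text{NA}(\cP)^g$ implies $\text{NA}(\cP)$, \thref{t2} produces $Q_0\in\cQ$ with $Q_0\gg P$; I would then show that $0$ lies in the convex set $E^P:=\{E_Q[g]:Q\in\cQ,\ Q\gg P\}\subset\R^e$. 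If not, Hahn--Banach yields $h\in\R^e$ and $c>0$ such that $E_Q[hg]\geq c$ for every $Q\in\cQ$ dominating $P$; coupled with the single-measure super-hedging duality (analogue of \thref{t3} under $\{P\}$), this furnishes $(H,-h)\in\cH\times\R^e$ with $(H\cdot S)_T-hg\geq c/2>0$ $P$-a.s., contradicting $\text{NA}(\cP)^g$.

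For part (ii), the tautology $x+(H\cdot S)_T+hg\geq f\iff x+(H\cdot S)_T\geq f-hg$ combined with \coref{c1} gives $\pi(f)=\inf_{h\in\R^e}\sup_{Q\in\cQ}E_Q[f-hg]$. The inequality $\pi(f)\geq\sup_{Q\in\cQ_g}E_Q[f]$ is immediate since $E_Q[hg]=0$ on $\cQ_g$. For the reverse inequality together with the existence of an optimal $(H^*,h^*)$, I would adapt the backward induction of \thref{t4} and \thref{t5} to the semi-static setting: set $V_T=f$, define $V_t$ via a one-period semi-static super-hedging operator built from \thref{t3} (now also infimizing over $h\in\R^e$ at $t=0$), verify that each $V_t$ remains upper semianalytic via measurable-selection arguments as in Section 3, and apply \thref{t4} together with the single static option trade at $t=0$ to assemble the super-hedging strategy.

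For part (iii), the symmetry $\cH=-\cH$ promotes $H\cdot S$ (for $Q\in\cQ$) from a $Q$-local-supermartingale to a $Q$-local-martingale, and under integrability from $f\in L^1(\cQ_g)$ and $g\in L^1(\cQ)$ to a true martingale with $E_Q[(H\cdot S)_T]=0$. For (a)$\Rightarrow$(b): writing $f=x+(H\cdot S)_T+hg$ gives $E_Q[f]=x+0+0=x$, independent of $Q\in\cQ_g$. (b)$\Rightarrow$(c): by part (i), pick $Q\in\cQ_g$ dominating $P$; combining (b) with part (ii) yields $E_Q[f]=\sup_{\cQ_g}E_{Q'}[f]=\pi(f)$. (c)$\Rightarrow$(a): by part (ii) take a super-hedge $\pi(f)+(H^*\cdot S)_T+h^*g\geq f$; for each $P$, picking the $Q$ from (c) and taking $Q$-expectation yields $E_Q[(H^*\cdot S)_T]\geq 0$, while the supermartingale/Fatou bound gives $\leq 0$; hence equality, and the non-negative super-hedging excess vanishes $Q$-a.s., thus $P$-a.s. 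As this holds for every $P\in\cP$, the replication $f=\pi(f)+(H^*\cdot S)_T+h^*g$ holds $\cP$-q.s. For completeness: a singleton $\cQ_g=\{Q^*\}$ makes (b) automatic so every admissible $f$ is replicable; conversely, if $\cQ_g$ contains distinct $Q_1,Q_2$, pick a Borel set $A$ with $Q_1(A)\neq Q_2(A)$, and observe that $f=1_A$ is bounded (hence in $L^1(\cQ_g)$) but $E_{Q_1}[f]\neq E_{Q_2}[f]$, violating (b). The principal technical obstacle is the ``$\Rightarrow$'' direction of part (i): the Hahn--Banach separation must interact correctly with the local-supermartingale characterization of $\cQ$ and with the single-measure duality of \thref{t3}, which relies on the closedness of $\cC_\cH(\cP)$ supplied by \asref{a1}.
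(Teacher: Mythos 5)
Your part (iii) matches the paper's argument (the Fatou/supermartingale bound you make explicit is exactly what the paper uses implicitly in the $(c)\Rightarrow(a)$ step, and the completeness characterization is identical), and your ``$\Leftarrow$'' direction of part (i) is also the paper's. However, your ``$\Rightarrow$'' direction of (i) and your treatment of (ii) both contain genuine gaps, and your overall structure differs in a crucial way: the paper proves part (i) and the duality \eqref{e22} \emph{simultaneously} by induction on the number of options $e$, while you try to handle them separately.

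In part (i), the Hahn--Banach step fails. Separating $0$ from $E^P=\{E_Q[g]:Q\in\cQ,\ Q\gg P\}$ gives $h\neq 0$ with $E_Q[hg]\geq c>0$ only for those $Q\in\cQ$ dominating $P$; this does \emph{not} bound $\sup_{Q\in\cQ}E_Q[-hg]$, so you cannot invoke \coref{c1} to obtain a $\cP$-q.s.\ super-replicating strategy. Retreating to the $\{P\}$-duality analogue of \thref{t3}, as you propose, produces $(H,-h)$ with $(H\cdot S)_T-hg>0$ only $P$-a.s., but NA$(\cP)^g$ only constrains strategies whose gain is nonnegative $\cP$-q.s.; a gain that is positive $P$-a.s.\ without being nonnegative $\cP$-q.s.\ is not a violation of NA$(\cP)^g$, so no contradiction results. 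The paper sidesteps this entirely: using the $e=k$ induction hypothesis and non-replicability, it shows $\pi^k(\pm g^{k+1})>0$, which yields $Q_-,Q_+\in\cQ_{g'}$ with $E_{Q_-}[g^{k+1}]<0<E_{Q_+}[g^{k+1}]$; then for any $P$ and any $Q\in\cQ_{g'}$ dominating $P$, the mixture $Q'=\lambda_-Q_-+\lambda Q+\lambda_+Q_+$ can be tuned so that $E_{Q'}[g^{k+1}]=0$ while $Q'\gg P$, giving the desired element of $\cQ_g$.

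In part (ii), ``adapting the backward induction of \thref{t4} and \thref{t5} to the semi-static setting'' does not work: the options are traded once, statically, at time $0$, so there is no per-period option position to insert into a dynamic-programming recursion and no meaningful one-period semi-static operator at intermediate times. The paper instead proves the existence of an optimal $(H^*,h^*)$ \emph{first}, via a compactness/coercivity argument: it shows (using \coref{c1}, non-replicability, and NA$(\cP)^g$) that $0$ is a relative-interior point of the convex set $\mathcal{I}=\{E_Q[g]:Q\in\cQ\}$; this makes $h\mapsto\phi(h)=\sup_{Q\in\cQ}E_Q[f-hg]$ coercive on $\text{span}(\mathcal{I})$, so the infimum can be restricted to a compact ball, and Lipschitz continuity of $\phi$ yields an optimizer $h^*$, after which \thref{t5} applied to $f-h^*g$ gives $H^*$. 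For the equality $\pi(f)=\sup_{\cQ_g}E_Q[f]$, the paper runs the induction on $e$: assuming the claim fails, it normalizes $\pi(f)=0$, notes that $0\notin\overline{\{E_Q[(g^{k+1},f)]:Q\in\cQ_{g'}\}}\subset\R^2$, and applies a two-dimensional separating hyperplane to produce $(y,z)$ with $\sup_{Q\in\cQ_{g'}}E_Q[yg^{k+1}+zf]<0$, which contradicts either positive homogeneity of $\pi$ (if $z>0$) or the lower bound $\pi(f)\geq\sup_{\cQ_g}E_Q[f]$ (if $z<0$). None of these ideas appear in your proposal, and without them the reverse inequality and the existence of an optimizer remain unproved.
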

\begin{proof}
We first show the existence of an optimal super-hedging strategy in (ii). It can be shown that
$$\pi(f)=\inf_{h\in\R^e}\inf\{x\in\mathbb{R}:\ \exists H\in\mathcal{H}\text{ s.t. }x+(H\cdot S)_T\geq f-hg,\ \mathcal{P}-q.s.\}=\inf_{h\in\R^e}\sup_{Q\in\mathcal{Q}}E_Q[f-hg],$$
where we apply \coref{t5} for the second equality above. 

We claim that $0$ is a relative interior point of the convex set 
$$\mathcal{I}:=\{E_Q[g]:\ Q\in\cQ\}.$$
If not, then there exists some $h\in\R^e$ with $h\neq 0$, such that $E_Q[hg]\leq 0$ for any $Q\in\cQ$. Then the super-hedging price of $hg$ using $S$, $\pi^0(hg)$, satisfies $\pi^0(hg)\leq 0$ by \coref{c1}. Hence by \thref{t5} there exists $H\in\mathcal{H}$, such that $(H\cdot S)_T\geq hg\ \cP-q.s.$. As the price of $hg$ is $0$, NA$(\cP)^g$ implies that
$$(H\cdot S)_T-hg=0\ \cP-q.s.,$$
which contradicts the assumption that each $g^i$ cannot be replicated by $S$ and the other options, as $h\neq 0$. Hence we have shown that $0$ is a relative interior point of $\mathcal{I}$.

Define $\phi:\ \mathbb{R}^e\mapsto\R$,
$$\phi(h)=\sup_{Q\in\mathcal{Q}}E_Q[f-hg],$$ and
observe that 
$$\pi(f)=\inf_{h\in\R^e}\phi(h)=\inf_{h\in\text{span}(\mathcal{I})}\phi(h).$$
We will now show that there exists a compact set $\mathbb{K}\subset \text{span}(\mathcal{I})$, such that
\begin{equation}\label{e27}
\pi(f)=\inf_{h\in\mathbb{K}}\phi(h).
\end{equation}
In order to do this, we will show that for any $h$ outside a particular ball will satisfy $\phi(h) \geq \phi(0)$, which establishes the claim.

Now, since $0$ is a relative interior point of $\mathcal{I}$, there exists $\gamma>0$, such that 
$$B_{\gamma}:=\{v\in\text{span}(\mathcal{I}):\ ||v||\leq\gamma\}\subset\mathcal{I}.$$
Consider the ball $\mathbb{K}:=\{h\in\text{span}(\mathcal{I}):\ ||h||\leq 2\sup_{Q\in\cQ}E_Q|f|/\gamma\}$.
Then for any $h\in\text{span}(\mathcal{I}) \setminus\mathbb{K}$, there exists $Q\in\cQ$ such that $-hE_Q[g]>2\sup_{Q\in\cQ}E_Q|f|$ (pick $Q$ s.t. $E_Q[g]$ is in the same direction as $-h$ and lies on the circumference of $B_{\gamma}$). This implies that 
$$\phi(h)\geq\sup_{Q\in\cQ}E_Q[-hg]-\sup_{Q\in\cQ}E_Q|f|>\sup_{Q\in\cQ}E_Q|f|=\phi(0).$$
Since such $h$ are suboptimal, it follows that
$$\pi(f)=\inf_{h\in\mathbb{K}}\phi(h).$$

On the other hand, observe that
$$|\phi(h)-\phi(h')|\leq \sup_{Q\in\mathcal{Q}}|E_Q[f-hg]-E_Q[f-h'g]|\leq\sup_{Q\in\mathcal{Q}}E|(h-h')g|\leq ||h-h'||\sup_{Q\in\mathcal{Q}}E_Q[||g||],$$
i.e. $\phi$ is continuous (in fact Lipschitz). Hence there exists some $h^*\in\mathbb{K}\subset\R^e$, such that
$$\pi(f)=\inf_{h\in\R^e}\sup_{Q\in\mathcal{Q}}E_Q[f-hg]=\sup_{Q\in\mathcal{Q}}E_Q[f-h^*g]=\inf\{x\in\mathbb{R}:\ \exists H\in\mathcal{H}\text{ s.t. }x+H\cdot S\geq f-h^*g,\ \mathcal{P}-q.s.\}.$$
Then by \thref{t5} there exists $H^*\in\mathcal{H}$, such that $\pi(f)+(H^*\cdot S)_T\geq f-h^*g\ \cP-q.s.$.

Next let us prove (i) and \eqref{e22} in (ii) simultaneously by induction. For $e=0$, (i) and \eqref{e22} hold by \thref{t1} and \coref{c1}. Assume for $e=k$ (i) and \eqref{e22} hold and we consider $e=k+1$. We first consider (i).  Let $\pi^k(g^{k+1})$ be the super-hedging price of $g^{k+1}$ using stocks $S$ and options $g':=(g^1,\dotso, g^k)$. By induction hypothesis, we have 
$$\pi^k(g^{k+1})=\sup_{Q\in\cQ_{g'}}E_Q[g^{k+1}].$$
Recall that the price of $g^{k+1}$ is $0$. Then NA$(\cP)^g$ implies $\pi^k(g^{k+1})\geq 0$. If $\pi^k(g^{k+1})=0$, then there exists $(H,h)\in\mathcal{H}\times\R^k$, such that $(H\cdot S)_T+hg'-g^{k+1}\geq 0\ \cP-q.s.$. Then by NA$(\cP)^g$,
$$(H\cdot S)_T+hg'-g^{k+1}=0,\quad\cP-q.s.,$$
which contradicts the assumption that $g^{k+1}$ cannot be replicated by $S$ and $g'$. Therefore, $\pi^k(g^{k+1})>0$. Similarly $\pi^k(-g^{k+1})>0$. Thus we have
$$\inf_{Q\in\cQ_{g'}}E_Q[g^{k+1}]<0<\sup_{Q\in\cQ_{g'}}E_Q[g^{k+1}].$$
Then there exists $Q_-,Q_+\in\cQ_{g'}$ satisfying
\begin{equation}\label{e24}
E_{Q_-}[g^{k+1}]<0<E_{Q_+}[g^{k+1}].
\end{equation}
Then for any $P\in\cP$, let $Q\in\cQ_{g'}$ dominating $P$. Let
$$Q':=\lambda_-Q_-+\lambda Q+\lambda_+Q_+.$$
By choosing some appropriate $\lambda_-,\lambda,\lambda_+>0$ with $\lambda_-+\lambda+\lambda_+=1$, we have $P\ll Q'\in\cQ_g$, where $g=(g^1,\dotso,g^{k+1})$.

Next consider \eqref{e22} in (ii). Denote the super-hedging price $\pi^k(\cdot)$ when using $S$ and $g'$, and $\pi(\cdot)$ when using $S$ and $g$, which is consistent with the definition in \eqref{e22}. It is easy to see that 
\begin{equation}\label{e26}
\pi(f)\geq\sup_{Q\in\mathcal{Q}_g}E_Q[f],
\end{equation}
and we focus on the reverse inequality. It suffices to show that
\begin{equation}\label{e23}
\exists Q_n\in\cQ_{g'}, \text{ s.t. }E_{Q_n}[g^{k+1}]\rightarrow 0\text{ and }E_{Q_n}[f]\rightarrow \pi(f).
\end{equation}
Indeed, if \eqref{e23} holds, then we define
$$Q_n':=\lambda_-^nQ_-+\lambda^n Q_n+\lambda_+^nQ_+,\quad\text{s.t. }E_{Q_n'}[g^{k+1}]=0,\text{ i.e., }Q_n'\in\cQ_g,$$
where $Q_+,Q_-$ are from \eqref{e24} and $\lambda_-^n,\lambda^n,\lambda_+^n\in[0,1]$ such that $\lambda_-^n+\lambda^n+\lambda_+^n=1$. Since $E_{Q_n}[g^{k+1}]\rightarrow 0$, we can choose $\lambda^n_{\pm}\rightarrow 0$. Then $E_{Q_n'}[f]\rightarrow\pi(f)$, which implies $\pi(f)\leq\sup_{Q\in\mathcal{Q}_g}E_Q[f]$. 

So let us concentrate on proving \eqref{e23}. By a translation, we may w.l.o.g. assume $\pi(f)=0$. Thus if \eqref{e23} fails, we have
$$0\notin\overline{\{E_Q[(g^{k+1},f)]:\ Q\in\cQ_{g'}\}}\subset\R^2.$$
Then there exists a separating vector $(y,z)\in\R^2$ with $||(y,z)||=1$ such that
\begin{equation}\label{e25}
\sup_{Q\in\cQ_{g'}}E_Q[yg^{k+1}+zf]<0.
\end{equation}
By the induction hypothesis, we have that
$$0>\sup_{Q\in\cQ_{g'}}E_Q[yg^{k+1}+zf]=\pi^k(yg^{k+1}+zf)\geq\pi(yg^{k+1}+zf)=\pi(zf).$$
Obviously from the above $z\neq 0$. If $z>0$, then by positive homogeneity $\pi(f)<0$, contradicting the assumption $\pi(f)=0$. Hence $z<0$. Take $Q''\in\cQ_g\subset\cQ_{g'}$. Then by \eqref{e25} $0>E_{Q''}[yg^{k+1}+zf]=E_{Q''}[zf]$, and thus $E_{Q''}[f]>0=\pi(f)$, which contradicts \eqref{e26}.

Finally, let us prove (iii). It is easy to see that (a)$\implies$(b)$\implies$(c). Now let (c) hold. Let $(H,h)\in\mathcal{H}\times\R^e$ such that $\pi(f)+(H\cdot S)_T+hg\geq f\ \mathcal{P}-q.s.$ If there exists $P\in\mathcal{P}$ satisfying
$$P\left\{\pi(f)+(H\cdot S)_T+hg>f\right\}>0,$$
then by choosing a $Q\in\mathcal{Q}_g$ that dominates $P$, we would have that $\pi(f)>E_Q[f]=\pi(f)$, contradiction. Hence $\pi(f)+H\cdot S+hg=f\ \mathcal{P}-q.s.$, i.e., $f$ is replicable. 

If the market is complete, then by letting $f=1_A$, we know that $Q\mapsto Q(A)$ is constant on $\mathcal{Q}$ for every $A\in\mathcal{B}(\Omega)$ by (b). As any probability measure is uniquely determined by its value on $\mathcal{B}(\Omega)$, we know that $\mathcal{Q}$ is a singleton. Conversely, if $\mathcal{Q}$ is a singleton, then (b) holds, and thus the market is complete by (a).
\end{proof}

\begin{appendices}
\section{Proofs of Some Technical Results}
\subsection{Proof of \leref{l6}}
\begin{proof}
Fix $t\in\{0,\dotso,T-1\}$ and let
\begin{equation}\label{e70}
\Lambda^\circ(\omega):=\{y\in\mathbb{R}^d:\ yv\geq0,\ \text{for all}\ v\in\text{supp}_{\mathcal{P}(\omega)}(\Delta S_t(\omega,\cdot))\},\quad\omega\in\Omega_t.
\end{equation}
It could be easily shown that
$$N_t^c=\{\omega\in\Omega_t:\ \Lambda_\mathcal{H}^\circ(\omega)\subset-\Lambda^\circ(\omega)\},$$
where $\Lambda_\mathcal{H}^\circ=\Lambda^\circ\cap\mathcal{H}_t$. For any $P\in\mathfrak{P}(\Omega_t)$, by \cite[(4.5)]{Nutz2}, there exists a Borel-measurable mapping $\Lambda_P^\circ:\ \Omega_t\twoheadrightarrow\mathbb{R}^d$ with non-empty closed values such that $\Lambda_P^\circ=\Lambda^\circ$ $P$-a.s.. This implies that the graph$(\Lambda_P^\circ)$ is Borel (see \cite[Theorem 18.6]{IDA}). Then it can be shown directly from the definition \eqref{e15} that $\Lambda_{\mathcal{H},P}^\circ:=\Lambda_P^\circ\cap\mathcal{H}_t$ is u.m. Thanks to the closedness of $-\Lambda^\circ$, the set
$$N_{t,P}^c=\{\omega:\ \Lambda_{\mathcal{H},P}^\circ(\omega)\subset-\Lambda^\circ(\omega)\}=\cap_{y\in\mathbb{Q}^d}\{\omega:\ \text{dist}(y,\Lambda_{\mathcal{H},P}^\circ(\omega))\geq\text{dist}(y,-\Lambda^\circ(\omega))\}$$
is u.m. Therefore, there exists a Borel measurable set $\tilde N_{t,P}^c$, such that $\tilde N_{t,P}^c=N_{t,P}^c=N_t^c\ P$-a.s. Thus $N_t^c$ is u.m. by \cite[Lemma 7.26]{Shreve}.

It remains to show that $N_t$ is $\mathcal{P}$-polar. If not, then there exists $P_*\in\mathcal{P}$ such that $P_*(N_t)>0$. Similar to the argument above, there exists a map $\Lambda_*^\circ:\ \Omega_t\twoheadrightarrow\mathbb{R}^d$ with a Borel measurable graph$(\Lambda_*^\circ)$, such that 
\begin{equation}\label{e71}
\Lambda_*^\circ=\Lambda^\circ\ P_*\text{-a.s.}.
\end{equation} Let
$$\Phi(\omega):=\{(y,P)\in(\Lambda_*^\circ\cap\mathcal{H}_t)(\omega)\times\mathcal{P}_t(\omega):\ E_P[y\Delta S_t(\omega,\cdot)]>0\},\quad\omega\in\Omega_t.$$
Then $N_t=\{\Phi\neq\emptyset\}\ P_*$-a.s. by \eqref{e17}, \eqref{e70} and \eqref{e71}. It is easy to see that (with a slight abuse of notation)
$$\text{graph}(\Phi)=[\text{graph}(\mathcal{P}_t)\times\mathbb{R}^d]\cap[\mathfrak{P}(\Omega)\times\text{graph}(\Lambda_*^\circ)]\cap\{E_P[y\Delta S_t(\omega,\cdot)]>0\}\cap[\mathfrak{P}(\Omega)\times\text{graph}(\mathcal{H}_t)]$$
is analytic. Therefore, by the Jankov-von Neumann Theorem \cite[Proposition 7.49]{Shreve}, there exists a u.m. selector $(y,P)$ such that $(y(\cdot),P(\cdot))\in\Phi(\cdot)$ on $\{\Phi\neq\emptyset\}$. As $N_t=\{\Phi\neq\emptyset\}\ P_*-a.s.$, $y$ is $P_*$-a.s. an arbitrage on $N_t$. Redefine $y=0$ on $\{y\notin\Lambda^\circ\cap\mathcal{H}_t\}$, and $P$ to be any u.m. selector of $\mathcal{P}_t$ on $\{\Phi=\emptyset\}$. (Here we redefine $y$ on $\{y\notin\Lambda^\circ\cap\mathcal{H}_t\}$ instead of $\{\Phi\neq\emptyset\}$ in order to make sure that $y(\cdot)\in\Lambda^\circ(\cdot)$ so that $y\Delta S_t\geq 0\ \cP-q.s.$.) So we have that $y(\cdot)\in\mathcal{H}_t(\cdot)$, $P(\cdot)\in\mathcal{P}_t(\cdot)$, $y\Delta S_t\geq 0\ \mathcal{P}-q.s.$, and
\begin{equation}\label{e16}
P(\omega)\{y(\omega)\Delta S_t(\omega,\cdot)>0\}>0\quad\text{for}\ P_*\text{-a.s.}\ \omega\in N_t.
\end{equation}
Now define $H=(H_0,\dotso,H_{T-1})\in\mathcal{H}$ satisfying 
$$H_t=y, \text{ and }H_s=0,\ s\neq t.$$
Also define 
$$P^*=P_*|_{\Omega_t}\otimes P\otimes P_{t+1}\otimes\dotso\otimes P_{T-1}\in\mathcal{P},$$
where $P_s$ is any u.m. selector of $\mathcal{P}_s$, $s=t+1,\dotso,T-1$. Then $(H\cdot S)_T\geq 0\ \mathcal{P}-q.s.$, and $P^*\{(H\cdot S)_T>0\}>0$ by \eqref{e16}, which contradicts NA$(\mathcal{P})$.
\end{proof}

\subsection{Proof of \leref{l15}}
\begin{proof}
Let
$$\Phi(\omega):=\{(R,\hat R)\in\mathfrak{P}(\Omega)\times\mathfrak{P}(\Omega):\ P(\omega)\ll R\ll \hat R\},\quad\omega\in\Omega_t,$$
which has an analytic graph as shown in the proof of \cite[Lemma 4.8]{Nutz2}. Consider $\Xi:\ \Omega_t\twoheadrightarrow\mathfrak{P}(\Omega)\times\mathfrak{P}(\Omega)$,
\begin{equation}\notag
\begin{split}
\Xi(\omega):=&\{(Q,\hat P)\in\mathfrak{P}(\Omega)\times\mathfrak{P}(\Omega):\ E_Q|\Delta S_t(\omega,\cdot)|<\infty,\ E_Q[y\Delta S_t(\omega,\cdot)]\leq 0,\ \forall y\in\mathcal{H}_t(\omega),\\
&P(\omega)\ll Q\ll \hat P\in\mathcal{P}_t(\omega)\}.
\end{split}
\end{equation}
Recall the analytic set $\Psi_{\mathcal{H}_t}$ defined \asref{a2}(iii). We have that
$$\text{graph}(\Xi)=[\Psi_{\mathcal{H}_t}\times\mathfrak{P}(\Omega)]\cap[\mathfrak{P}(\Omega)\times\text{graph}(\mathcal{P}_t)]\cap\text{graph}(\Phi)$$
is analytic. As a result, we can apply the Jankov-von Neumann Theorem \cite[Proposition 7.49]{Shreve} to find u.m. selectors $Q(\cdot),\hat P(\cdot)$ such that $(Q(\cdot),\hat P(\cdot))\in\Xi(\cdot)$ on $\{\Xi\neq\emptyset\}$. We set $Q(\cdot):=\hat P(\cdot):=P(\cdot)$ on $\{\Xi=\emptyset\}$. By \thref{t1}, if \asref{a2}(ii) and NA$(\mathcal{P}_t(\omega))$ hold, and $P(\omega)\in\mathcal{P}_t(\omega)$, then $\Xi(\omega)\neq\emptyset$. So our construction satisfies the conditions stated in the lemma.

It remains to show that graph$(\mathcal{Q}_t)$ is analytic. Using the same argument for $\Xi$, but omitting the lower bound $P(\cdot)$, we see that the map $\tilde\Xi:\ \Omega_t\Mapsto\mathfrak{P}(\Omega)\times\mathfrak{P}(\Omega)$,
\begin{equation}\notag
\begin{split}
\tilde\Xi(\omega):=&\{(Q,\hat P)\in\mathfrak{P}(\Omega)\times\mathfrak{P}(\Omega):\ E_Q|\Delta S_t(\omega,\cdot)|<\infty,\ E_Q[y\Delta S_t(\omega,\cdot)]\leq 0,\ \forall y\in\mathcal{H}_t(\omega),\\
&Q\ll \hat P\in\mathcal{P}_t(\omega)\}
\end{split}
\end{equation}
has an analytic graph. Since graph$(\mathcal{Q}_t)$ is the image of graph$(\tilde\Xi)$ under the canonical projection $\Omega_t\times\mathfrak{P}(\Omega)\times\mathfrak{P}(\Omega)\rightarrow\Omega_t\times\mathfrak{P}(\Omega)$, it is also analytic.
\end{proof}

\subsection{Proof of \leref{l9}}
\begin{proof}
Similar to the argument in \cite[Lemma 4.8]{Nutz2}, we can show that the set
$$J:=\{(P,Q)\in\mathfrak{P}(\Omega)\times\mathfrak{P}(\Omega):\ Q\ll P\}$$
is Borel measurable. Thus, for  $\Xi:\ \Omega_t\twoheadrightarrow\mathfrak{P}(\Omega)$
\begin{equation}\notag
\Xi(\omega)=\{Q\in\mathfrak{P}(\Omega):\ Q\lll\mathcal{P}_t(\omega)\},
\end{equation}
graph$(\Xi)$ is analytic since it is the projection of the analytic set 
$$[\Omega_t\times J]\cap[\text{graph}(\mathcal{P}_t)\times\mathfrak{P}(\Omega)]$$
onto $\Omega_t\times\mathfrak{P}(\Omega)$. By \asref{a4}(ii), the function $\hat A:\ \Omega_t\times\mathfrak{P}(\Omega)\mapsto\mathbb{R}^*$,
$$\hat A(\omega, Q)=A(\omega, Q)1_{\{E_Q|\Delta S_t(\omega,\cdot)|<\infty\}}+\infty 1_{\{E_Q|\Delta S_t(\omega,\cdot)|=\infty\}}$$
is l.s.a. As a result,
$$\text{graph}(\mathfrak{Q}_t)=\text{graph}(\Xi)\cap\{\hat A<\infty\}$$
is analytic.
\end{proof}

\subsection{Proof of \leref{l7}}
\begin{proof}
Denote the right side above by $\mathfrak{R}$. Let $R=Q_0\otimes\dotso\otimes Q_{T-1}\in\mathfrak{R}$. Without loss of generality, we can assume that $Q_t:\ \Omega_t\mapsto\mathfrak{P}(\Omega)$ is Borel measurable and $Q_t(\cdot)\in\mathfrak{Q}_t(\cdot)$ on $\{\mathfrak{Q}_t\neq\emptyset\}$ $Q^{t-1}:=Q_0\otimes\dotso\otimes Q_{t-1}$-a.s., $t=1,\dotso,T-1$. Let
$$\Phi_t(\omega):=\{(Q,P)\in\mathfrak{P}(\Omega)\times\mathfrak{P}(\Omega):\ Q_t(\omega)=Q\ll P\in\mathcal{P}_t(\omega)\},\quad\omega\in\Omega_t,\ t=0,\dotso, T-1.$$
Similar to the argument in the proof of \cite[Lemma 4.8]{Nutz2}, it can be shown that graph$(\Phi)$ is analytic, and thus there exists u.m. selectors $\hat Q_t(\cdot), \hat P_t(\cdot)$, such that $(\hat Q_t(\cdot),\hat P_t(\cdot))\in\Phi(\cdot)$ on $\{\Phi_t\neq\emptyset\}$. We shall show by an induction that for $t=0,\dotso,T-1$, 
$$\Phi_t\neq\emptyset\text{ for }t=0, \text{ and }\{\Phi_t=\emptyset\}\text{ is a }Q^{t-1}\text{-null set for }t=1,\dotso T-1,$$
and there exists a universally selector of $\mathcal{P}_t$ which we denote by $P_t(\cdot):\ \Omega_t\mapsto\mathfrak{P}(\Omega)$ such that
$$Q^t=\hat Q_0\otimes\dotso\otimes\hat Q_t\ll P_0\otimes\dotso\otimes P_t.$$
Then by setting $t=T-1$, we know $R=Q^{T-1}\in\mathfrak{Q}$. It is easy to see that the above holds for $t=0$. Assume it holds for $t=k<T-1$. Then $\{\Phi_{k+1}=\emptyset\}\subset\{Q_{k+1}(\cdot)\notin\mathfrak{Q}_{k+1}(\cdot)\}$ is a $Q^k$-null set by \leref{l6} and the induction hypothesis. As a result, $\hat Q_{k+1}=Q_{k+1}\ Q^k$-a.s., which implies that $Q^{k+1}=\hat Q_0\otimes\dotso\otimes\hat Q_{k+1}$. Setting $P_{k+1}=\hat P_{k+1} 1_{\{\Phi\neq\emptyset\}}+\tilde P_{k+1}1_{\{\Phi=\emptyset\}}$, where $\tilde P_{k+1}(\cdot)$ is any u.m. selector of $\mathcal{P}_{k+1}$, we have that $P_0\otimes\dotso\otimes P_{k+1}\in\mathcal{P}^{k+1}$. Since $Q_{k+1}(\omega)\ll P_{k+1}(\omega)$ for $Q^k$-a.s. $\omega\in\Omega_k$, together with the induction hypothesis, we have that $Q^{k+1}\ll P_0\otimes\dotso\otimes P_{k+1}$. Thus we finish the proof for the induction.

Conversely, for any $R\in\mathfrak{Q}$, we may write $R=Q_0\otimes\dotso\otimes Q_{T-1}$, where $Q_t:\ \Omega_t\mapsto\mathfrak{P}(\Omega)$ is some Borel kernel, $t=0,\dotso,T-1$. Then $Q_t(\omega)\in\mathfrak{Q}_t(\omega)$ for $Q^{t-1}$-a.s. $\omega\in\Omega_{t-1}$. Thanks to the analyticity of graph$(\mathfrak{Q}_t)$, we can modify $Q_t(\cdot)$ on a $Q^{t-1}$-null set, such that the modification $\hat Q_t(\cdot)$ is u.m. and $\hat  Q_t(\cdot)\in\mathfrak{Q}_t(\cdot)$ on $\{\mathfrak{Q}_t\neq\emptyset\}$. Using a forward induction of this modification, we have that $R=\hat Q_0\otimes\dotso\otimes\hat Q_{T-1}\in\mathfrak{R}$.
\end{proof}

\end{appendices}

\bibliographystyle{siam}
\bibliography{ref}
\end{document}